\newdimen\AAdi%
\newbox\AAbo%
\def\AAk#1#2{\s_etbox\AAbo=\hbox{#2}\AAdi=\wd\AAbo\kern#1\AAdi{}}%
\def\AAr#1#2#3{\s_etbox\AAbo=\hbox{#2}\AAdi=\ht\AAbo\raise#1\AAdi\hbox{#3}}%
\font\tenmsb=msbm10 at 11pt \font\sevenmsb=msbm7 at 8pt
\font\fivemsb=msbm5 at 6pt
\renewcommand{\theequation}{\thesection\arabic{equation}}
\font\bbbld=msbm10 scaled\magstephalf
\newcommand{\ba}{\begin{array}}
\newcommand{\ea}{\end{array}}
\newcommand{\bfR}{\hbox{\bbbld R}}
\newcommand{\bfS}{\mathbb{S}}
\newcommand{\E}{\mathrm e}
\def\aint{\frac{\ \ }{\ \ }{\hskip -0.4cm}\int}
\newtheorem{theorem}{Theorem}[section]
\newtheorem{lemma}[theorem]{Lemma}
\newtheorem{proposition}[theorem]{Proposition}
\newtheorem{corollary}[theorem]{Corollary}
\theoremstyle{definition}
\theoremstyle{remark}
\newtheorem{remark}[theorem]{Remark}
\begin{document}
\setlength{\baselineskip}{1.2\baselineskip}

\title[Flow by $K^\alpha$]{\bf Flow by powers of the Gauss curvature}

\author{Ben Andrews}
\address{Mathematical Sciences Institute, Australia National University, ACT 2601 Australia}
\email{Ben.Andrews@anu.edu.au}

\author{Pengfei Guan}
\address{Department of Mathematics and Statistics\\
        McGill University\\
        Montreal, Quebec, H3A 2K6, Canada.}
\email{guan@math.mcgill.ca}
\author{Lei Ni}
\address{Department of Mathematics, University of California at San Diego, La Jolla, CA 92093, USA}
\email{lni@math.ucsd.edu}
\thanks{The first author was supported by Discovery Projects grant DP120102462 and Australian Laureate Fellowship FL150100126 of
 the Australian Research Council.  The research of the second author is partially supported by an NSERC Discovery Grant, and the research of the third author is partially supported by NSF grant DMS-1401500.}
\begin{abstract}
We prove that convex hypersurfaces in ${\mathbb R}^{n+1}$ contracting under the flow by any power $\alpha>\frac{1}{n+2}$ of the Gauss curvature converge (after rescaling to fixed volume) to a limit which is a smooth, uniformly convex self-similar contracting solution of the flow. Under additional central symmetry of the initial body we prove that the limit is the round sphere.
\end{abstract}
\subjclass[2010]{35K55, 35B65, 53A05, 58G11}

\maketitle

\leftskip 0 true cm \rightskip 0 true cm
\renewcommand{\theequation}{\arabic{equation}}
\setcounter{equation}{0} \numberwithin{equation}{section}

\section{Introduction}

 In this paper we study the flow of convex hypersurfaces $\tilde X(\cdot, \tau): M\to \mathbb{R}^{n+1}$ by the $\alpha$-power of Gauss curvature:
\begin{equation}\label{eq-gcf-alpha}
\frac{\partial}{\partial \tau}\tilde X(x, \tau)= -\tilde K^{\alpha}(x, \tau)\, \nu(x, \tau).
\end{equation}
Here $\nu(x, \tau)$ is the unit exterior normal at $\tilde X(x, \tau)$ of $\tilde M_\tau=\tilde X(M, \tau)$, and $\tilde K(x,\tau)$ is the Gauss curvature of $\tilde M_\tau$ at $\tilde X(x,\tau)$ (the tildes distinguish these from the normalized counterparts introduced below).

Equation (\ref{eq-gcf-alpha}) is a parabolic fully nonlinear equation of Monge-Amper\'e type, hence the study sheds light on the general theory of such equations.  The case $\alpha=1$ was proposed by Firey \cite{Firey} as a model for the wearing of tumbling stones.  The equation with general powers also arises in the study of affine geometry and of image analysis \cite{ST1, ST2, AGLM, CS, NK}.  For large $\alpha$ the equation becomes more degenerate and for small $\alpha$ it becomes more singular. Studying them together gives an example of nonlinear parabolic equations with varying degeneracy. The interested reader may consult \cite{Andrews-pjm} for motivation for the study of this flow.
For the short time existence, it was proved in \cite{Tso}  for $\alpha=1$,  and for any $\alpha>0$ in \cite{Chow} that the flow shrinks any smooth, uniformly convex body $M_0=\partial\Omega_0$ to a point $z_\infty$ in finite time $T>0$. An important differential Harnack estimate (also referred as Li-Yau-Hamilton type estimate) was later proved in \cite{Chow2} (see also \cite{Andrews-Harnack}). The current paper concerns the asymptotics of the solutions as the time approaches to the singular time $T$.

The study of the asymptotic behavior is equivalent to the large time behavior of the normalized flow, which is obtained by re-scaling about the final point to keep the enclosed volume fixed, and suitably re-parametrizing the time variable (see section \ref{sec:mono} for details):
\begin{equation}\label{gcf-alpha-nor}
\frac{\partial}{\partial t}X(x, t)= -\frac{K^{\alpha}(x, t)}{\aint_{\mathbb{S}^n}K^{\alpha-1}}\, \nu(x, t) +X(x,t).
\end{equation}
Here we write $\aint_{\mathbb{S}^n} f(x) d\theta(x) =\frac{1}{\omega_n}\int_{\mathbb{S}^n} f(x)\, d\theta(x)$ for any continuous function $f$ on $\bfS^n$, where $\omega_n=|\bfS^n|$, and we interpret $K$ as a function on $\bfS^n$ via the Gauss map diffeomorphism $\nu:\ M_t\to\bfS^n$.  It can be easily checked that $M_t=X(M,t)$ encloses a convex body $\Omega_t$ whose volume $|\Omega_t|$ changes according to the equation:
\begin{align*}
\frac{d}{dt}|\Omega_t|&=-\frac{1}{\aint_{\mathbb{S}^n}K^{\alpha-1}}\int_{M_t} K^\alpha +\int_{M_t}\langle X, \nu\rangle\\
&= -\omega_n +(n+1)|\Omega_t|.
\end{align*}
Hence if $|\Omega_0|=|B(1)|=\frac{\omega_n}{n+1}$, where $B(1)\subset \mathbb{R}^{n+1}$ is the unit ball, then $|\Omega_t|=|B(1)|$ for all $t$.

We briefly summarize previous work on the asymptotic behaviour of these flows:  Chow \cite{Chow} analyzed the case $\alpha=\frac1n$ and proved that solutions of the normalized flow converge to the unit sphere as $t\to\infty$,  by using pointwise estimates on the second fundamental form as previously carried out by Huisken \cite{Huisken-convex} for the mean curvature flow.  Convergence to spheres is known in some other special cases:  This was proved for $n=1$ and $\alpha> 1$ in \cite{A-COCU}, for $n=1$ and $\frac13<\alpha<1$ in \cite{CLSICF} and for $n=2$, $\alpha=1$ in \cite{An3} by the first author (see also \cite{Chou-Zhu}*{Proposition 2.3} for the case $n=1$), and for $n=2$ and $\frac12<\alpha<1$ by Chen and the first author \cite{A-cxz}.   The convergence of the flow was also completely analyzed for $\alpha=\frac{1}{n+2}$ by the first author in \cite{Andrews-jdg} for any dimension (the case $n=1$ was treated by Sapiro and Tannenbaum \cite{ST3}):  In this case the flow has a remarkable affine invariance, and the rescaled solutions converge to ellipsoids.  It seems a plausible conjecture (generalising a conjecture of Firey \cite{Firey} for the case $\alpha=1$) that solutions of \eqref{gcf-alpha-nor} should converge to spheres for any $\alpha>\frac{1}{n+2}$, but this is at present still open except for the cases mentioned above and the case $\alpha\ge1$ with central symmetry treated in this paper.

The sphere is a stationary solution of \eqref{gcf-alpha-nor} for any $\alpha$, corresponding to a `soliton' solution of \eqref{eq-gcf-alpha} which shrinks without change of shape.
Convergence to (possibly non-spherical) solitons was established for $\alpha\in (\frac{1}{n+2}, \frac{1}{n})$ in \cite{Andrews-pjm} by the first author, not only for the flow \eqref{eq-gcf-alpha}, but also for a family of anisotropic generalisations.  When $\alpha \in (0, \frac{1}{n+2})$, convergence to solitons was proved under the additional assumption that the isoperimetric ratio remains bounded, and examples were provided of non-spherical solitons for small $\alpha$.  However it was proved in \cite{A-instab} for $n=1$ and $0<\alpha<\frac13$ that the isoperimetric ratio of solutions generically becomes unbounded as the curve shrinks to a point, so the solutions of the normalized flow do not converge.  This is expected to remain true in higher dimensions for $\alpha<\frac{1}{n+2}$.
The methods of \cite{Andrews-pjm} do not apply for $\alpha>\frac1n$, and indeed for any such $\alpha$ there are examples of flows with smooth, strictly positive anisotropy where no positive lower bound on the Gauss curvature can hold.  This demonstrates that the analysis in these cases is much more subtle than for smaller $\alpha$.   Nevertheless, smooth convergence to solitons was established recently for the case $\alpha=1$ (without anisotropy) by the second and the third authors in \cite{Guan-N}.

The present paper generalizes the methods of \cite{Guan-N} to the more general case $\alpha>\frac1{n+2}$:
We prove smooth convergence of solutions of \eqref{gcf-alpha-nor} to solitons for arbitrary smooth, uniformly convex initial hypersurfaces, for any $\alpha>\frac{1}{n+2}$.  The crucial observation (in Lemma \ref{e-p}) is that an associated `entropy point', generalizing the classical Santal\'o point, lies strictly in the interior for any convex body of full dimension.  From this we arrive at a uniform lower bound on the support function (Theorem \ref{C0}), and this in turn implies a uniform lower bound on the Gauss curvature (Theorem \ref{thm-lower-k}). A new feature here is that we prove this without appealing to the different Harnack estimate of \cite{Andrews-Harnack} which extends to the anisotropy flow.  These estimates make it possible to use the methods of \cite{Andrews-pjm} to deduce a uniform $C^2$ estimate (Theorem \ref{c2-sharp}), and to conclude that the solution of the normalized flow for any smooth initial convex body $\Omega_0$ converges smoothly as $t\to\infty$ to a  uniformly convex soliton (Theorem \ref{c-infty}).

As a corollary of the main result and a soliton classification result (Proposition \ref{soliton}) we prove the smooth convergence to a round sphere for $\alpha\ge 1$, provided that the initial data is centrally symmetric, by adapting an argument of Firey \cite{Firey} for the case $\alpha=1$ to prove that centrally symmetric solitons are spheres.

Since the main convergence result is already known for the case $\alpha \in [\frac{1}{n+2}, \frac{1}{n}]$ we focus mainly on the case $\alpha \in (\frac{1}{n}, \infty)$,  but our techniques do provide a uniform treatment for all $\alpha \in (\frac{1}{n+2}, \infty)$.
 In the last section we also prove some stability estimates involving the entropy quantities. This generalizes the entropy nonnegativity result established in the next section. We expect applications of such estimates in the study of  convex bodies and their flows.

\section{The entropy and its basic properties}

Our argument is based on the analysis of an `entropy' functional, defined $\alpha \in (0, \infty)$ by
\begin{equation}\label{eq:def-e}
\mathcal{E}_{\alpha} (\Omega) :=
\sup_{z_0\in\Omega}{\mathcal E}_\alpha(\Omega,z_0),
\end{equation}
where
\begin{equation}\label{eq:def-e-z}
{\mathcal E}_\alpha(\Omega,z_0) :=
\begin{cases}
\frac{\alpha}{\alpha-1}\log\left(\aint_{\bfS^n} u_{z_0}(x)^{1-\frac1\alpha}\,d\theta(x)\right),&\alpha\neq 1;\\
\aint_{\bfS^n}\log u_{z_0}(x)\,d\theta(x),&\alpha=1.
\end{cases}
\end{equation}
where $u_{z_0}(x):=\sup_{z\in\Omega}\left\langle z-z_0,x\right\rangle$ is the \emph{support function} of $\Omega$ in direction $x$ with respect to $z_0$.  When $\alpha=1$ this agrees with the entropy used in \cite{Guan-N}, first introduced by Firey \cite{Firey}.
When $\alpha=\frac{1}{n+2}$, the entropy is related to the minimum volume of the polar dual body $\Omega^*_{z_0}$ of $\Omega$, which is attained at the Santal\'o point $z_s$ \cite{Sch}*{\S 10.5}. The general case was also used in \cite{Ivaki} recently.  We briefly recall  the definition of the polar dual:  Given $\Omega$ and $z_0\in \operatorname{Int}(\Omega)$, the polar dual of $\Omega$ with respect to $z_0$ is defined by
$$
\Omega^*_{z_0} -z_0=\{ w\, |\, \langle w, z-z_0\rangle \le 1, \forall\,  z\in \Omega\}.
$$
Writing $w$ in polar coordinates we have that
\begin{equation}\label{def-dual-body1}
\Omega^*_{z_0} -z_0=\{(r, x)\in (0, \infty)\times \mathbb{S}^n \, |\,  r u_{z_0}(x)\le 1\}.
\end{equation}
This implies the formula for the volume of the dual body \cite{Sch}*{\S 1.7}, \cite{MP}:
$$
|\Omega^*_{z_0}|=\int_0^{1/u_{z_0}(x)} \int_{\mathbb{S}^n}r^n \, d\theta\, dr=\frac{1}{n+1}\int_{\mathbb{S}^n}\frac{1}{u_{z_0}^{n+1}(x)}\, d\theta(x).
$$
The volume of  $\Omega^*_{z_0}$ varies with $z_0$, and is minimized at a unique point $z_s$ called the \emph{Santal\'o point}. We also denote $\Omega^*_{z_s}$ by $\Omega^*_s$.  The Blaschke-Santal\'o inequality (\cite{Blaschke}*{p.208}, \cite{Santalo}, \cite{MP}) states
\begin{eqnarray}
|\Omega|\cdot |\Omega_s^*|\le |B(1)|^2. \label{BS}
\end{eqnarray}
If $|\Omega|=|B(1)|$, then this
implies  $|\Omega^*|\le |B(1)|=\frac{\omega_n}{n+1}$.

\begin{proposition}\label{E-incr} For any fixed convex body of full dimension and $z_0\in\operatorname{Int}(\Omega)$, the entropy ${\mathcal E}_{\alpha}(\Omega,z_0)$ is continuous and nondecreasing in $\alpha$ (strictly unless $u_{z_0}$ is constant).
\end{proposition}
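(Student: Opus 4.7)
The key is to recognize $\mathcal{E}_\alpha(\Omega,z_0)$ as (the logarithm of) a power mean of $u_{z_0}$ with respect to the normalized surface measure $d\theta$ on $\bfS^n$. Set $p=1-\frac{1}{\alpha}$, so that $\frac{\alpha}{\alpha-1}=\frac{1}{p}$ and, for $\alpha\neq 1$,
\begin{equation*}
\mathcal{E}_\alpha(\Omega,z_0)=\frac{1}{p}\log\left(\aint_{\bfS^n}u_{z_0}^{p}\,d\theta\right)=\log M_p(u_{z_0}),
\end{equation*}
where $M_p$ denotes the $p$-mean with respect to $d\theta$. The map $\alpha\mapsto p=1-1/\alpha$ is a continuous strictly increasing bijection from $(0,\infty)$ onto $(-\infty,1)$, sending $\alpha=1$ to $p=0$. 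Since $z_0\in\operatorname{Int}(\Omega)$ and $\Omega$ is a convex body of full dimension, $u_{z_0}$ is continuous and bounded between two positive constants, so there are no integrability issues and the substitution is legitimate.

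\textbf{Monotonicity.} The plan is to reduce the claim to the classical power-mean inequality $M_{p_1}(f)\le M_{p_2}(f)$ for $p_1<p_2$, with strict inequality unless $f$ is constant. I would prove this directly by differentiating $\varphi(p):=\log M_p(u_{z_0})$ with respect to $p$ and showing $\varphi'(p)\ge 0$. A short computation gives
\begin{equation*}
\varphi'(p)=\frac{1}{p^{2}}\left(\frac{\aint u_{z_0}^{p}\log(u_{z_0}^{p})\,d\theta}{\aint u_{z_0}^{p}\,d\theta}-\log\left(\aint u_{z_0}^{p}\,d\theta\right)\right),
\end{equation*}
which is nonnegative by Jensen's inequality applied to the convex function $s\mapsto s\log s$ on $(0,\infty)$ with the probability measure $u_{z_0}^{p}\,d\theta/\aint u_{z_0}^{p}\,d\theta$. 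Strict inequality holds unless $u_{z_0}^{p}$ is constant, i.e. unless $u_{z_0}$ is constant.

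\textbf{Continuity at $\alpha=1$.} The only place where continuity is nontrivial is the gluing at $p=0$. Here I would use the Taylor expansion $u^{p}=1+p\log u+O(p^{2})$ uniformly on $\bfS^n$ (permissible because $\log u_{z_0}$ is bounded), giving
\begin{equation*}
\aint u_{z_0}^{p}\,d\theta=1+p\aint\log u_{z_0}\,d\theta+O(p^{2}),
\end{equation*}
whence $\frac{1}{p}\log\aint u_{z_0}^{p}\,d\theta\to\aint\log u_{z_0}\,d\theta$ as $p\to 0$, matching the definition of $\mathcal{E}_1(\Omega,z_0)$. Continuity for $\alpha\neq 1$ is immediate from the dominated convergence theorem and the continuity of $\log$.

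\textbf{Main obstacle.} There is no serious obstacle; the substance of the statement is the classical Hardy--Littlewood--P\'olya monotonicity of power means, together with careful bookkeeping of the sign of $p$ and $\alpha-1$ to confirm that the factor $\frac{\alpha}{\alpha-1}=\frac{1}{p}$ orients the inequality in the stated direction (nondecreasing in $\alpha$, equivalently nondecreasing in $p$). The equality case likewise follows at once from the equality case in Jensen's inequality, together with the continuity of $u_{z_0}$.
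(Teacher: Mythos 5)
Your proposal is correct and matches the paper's approach: the paper simply cites the classical monotonicity and continuity in $p$ of the power means $M_p(u_{z_0})$ with $p=1-\frac1\alpha$ (via H\"older, \cite{GT}), which is exactly the fact you prove directly by differentiation and Jensen. One small bookkeeping point: the inequality $\aint u_{z_0}^p\log(u_{z_0}^p)\,d\theta\ge\bigl(\aint u_{z_0}^p\,d\theta\bigr)\log\bigl(\aint u_{z_0}^p\,d\theta\bigr)$ is Jensen for the convex function $s\mapsto s\log s$ with respect to the normalized measure $d\theta$ itself (equivalently, nonnegativity of the relative entropy of $u_{z_0}^p\,d\theta/\aint u_{z_0}^p\,d\theta$ against $d\theta$), not Jensen with respect to the weighted measure as written; the conclusion and the equality case are unaffected.
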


\begin{proof}
For continuity at $\alpha=1$, see for example \cite{GT}*{Problem 7.1}.  Monotonicity for either $\alpha>1$ or $0<\alpha<1$ is a direct consequence of the H\"older inequality (see \cite{GT}*{page 146}).
\end{proof}

\begin{corollary}\label{key} Let $\Omega$ be a bounded convex body in $\mathbb R^{n+1}$ with $|\Omega|=|B(1)|$. Then for each $\alpha>\frac{1}{n+2}$ we have ${\mathcal E}_\alpha(\Omega)\geq 0$, with equality if and only if $\Omega$ is a ball.
\end{corollary}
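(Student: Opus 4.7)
The plan is to prove the inequality first at the borderline value $\alpha=\frac{1}{n+2}$, where the Blaschke--Santaló inequality (\ref{BS}) does the work, and then to propagate the bound to all larger $\alpha$ using the monotonicity in $\alpha$ established in Proposition \ref{E-incr}.

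First I would compute $\mathcal{E}_{1/(n+2)}(\Omega,z_0)$ explicitly. With $\alpha=\frac{1}{n+2}$ the prefactor $\frac{\alpha}{\alpha-1}$ equals $-\frac{1}{n+1}$ and the exponent $1-\frac{1}{\alpha}$ equals $-(n+1)$, so using the polar-dual volume formula
$$
|\Omega^*_{z_0}|=\frac{1}{n+1}\int_{\mathbb{S}^n}u_{z_0}(x)^{-(n+1)}\,d\theta(x)
$$
recalled in the excerpt and the identity $\omega_n=(n+1)|B(1)|$, one gets
$$
\mathcal{E}_{1/(n+2)}(\Omega,z_0)=-\frac{1}{n+1}\log\frac{|\Omega^*_{z_0}|}{|B(1)|}.
$$
Hence the supremum over $z_0\in\mathrm{Int}(\Omega)$ is attained exactly when $|\Omega^*_{z_0}|$ is smallest, namely at the Santaló point $z_s$. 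Combined with $|\Omega|=|B(1)|$ and the Blaschke--Santaló inequality $|\Omega|\cdot|\Omega^*_s|\le|B(1)|^2$, this yields $|\Omega^*_s|\le|B(1)|$ and therefore $\mathcal{E}_{1/(n+2)}(\Omega)\ge 0$.

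For general $\alpha>\frac{1}{n+2}$, Proposition \ref{E-incr} gives $\mathcal{E}_\alpha(\Omega,z_0)\ge \mathcal{E}_{1/(n+2)}(\Omega,z_0)$ pointwise in $z_0$, so evaluating at the Santaló point and taking the supremum in the definition of $\mathcal{E}_\alpha(\Omega)$ yields
$$
\mathcal{E}_\alpha(\Omega)\ \ge\ \mathcal{E}_\alpha(\Omega,z_s)\ \ge\ \mathcal{E}_{1/(n+2)}(\Omega,z_s)\ =\ \mathcal{E}_{1/(n+2)}(\Omega)\ \ge\ 0.
$$
This proves the first assertion. For the equality case, suppose $\mathcal{E}_\alpha(\Omega)=0$ for some $\alpha>\frac{1}{n+2}$; then the displayed chain collapses, giving in particular $\mathcal{E}_\alpha(\Omega,z_s)=\mathcal{E}_{1/(n+2)}(\Omega,z_s)$. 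The \emph{strict} monotonicity clause in Proposition \ref{E-incr} forces $u_{z_s}$ to be constant on $\mathbb{S}^n$, which means $\Omega$ is a Euclidean ball centered at $z_s$; the normalization $|\Omega|=|B(1)|$ then makes it the unit ball. The converse (equality for balls) is immediate since $u_{z_0}\equiv 1$ at the center. The one point requiring care is the equality case, where one must apply strict monotonicity at the Santaló point (rather than at an arbitrary would-be maximizer of $\mathcal{E}_\alpha(\Omega,\cdot)$, which need not exist or agree with $z_s$); using $z_s$ sidesteps this since it is the known maximizer at the threshold exponent.
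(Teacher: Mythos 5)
Your proof is correct and follows essentially the same route as the paper: identify $\mathcal{E}_{1/(n+2)}(\Omega,z_0)$ with $-\frac{1}{n+1}\log\bigl(|\Omega^*_{z_0}|/|B(1)|\bigr)$, apply Blaschke--Santal\'o at the Santal\'o point, and then use the chain $\mathcal{E}_\alpha(\Omega)\geq\mathcal{E}_\alpha(\Omega,z_s)\geq\mathcal{E}_{1/(n+2)}(\Omega,z_s)\geq 0$ from the monotonicity in $\alpha$. Your treatment of the equality case via the strict-monotonicity clause at $z_s$ is a correct filling-in of a detail the paper leaves implicit.
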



\begin{proof}
We observe that
$$
{\mathcal E}_{\frac1{n+2}}(\Omega,z_0) = -\frac{1}{n+1}\log\left(\aint_{\bfS^n}u_{z_0}(x)^{-(n+1)}\,d\theta(x)\right) = -\frac{1}{n+1}\log\left(\frac{n+1}{\omega_n}\left|\Omega^*_{z_0}\right|\right).
$$
It follows that the supremum ${\mathcal E}_{\frac1{n+2}}(\Omega)$ is attained when $z_0$ is the Santal\'o point $z_s$, and that ${\mathcal E}_{\frac1{n+2}}(\Omega)\geq 0$ by \eqref{BS}.  The Corollary follows by the monotonicity in $\alpha$ of Proposition \ref{E-incr}:  We have ${\mathcal E}_\alpha(\Omega)\geq {\mathcal E}_\alpha(\Omega,z_s)\geq {\mathcal E}_{\frac1{n+2}}(\Omega,z_s)\geq 0$.
\end{proof}


 Before proving the key estimates involving the entropy, we provide a geometric interpretation in terms of a weighted volume of the dual body,
analogous to \cite{Guan-N}*{Proposition 2.2}:

\begin{proposition}\label{prop:geom-entropy}  Let $\Omega^{0}_{z_0}=\cap_{z\in\Omega}\{x\in\bfR^{n+1}:\ \left\langle z-z_0,x\right\rangle\leq 1\} =\Omega^*_{z_0}-z_0$.
Then for $\alpha <1$,
$$
\omega_n\E^{\frac{\alpha-1}{\alpha}\mathcal{E}_\alpha(\Omega, z_0)}=\int_{\mathbb{S}^n} u_{z_0}^{1-\frac{1}{\alpha}}(x)\, d\theta(x)=\frac{1}{\frac{1}{\alpha}-1}\int_{\Omega^0_{z_0}}|w|^{\frac{1}{\alpha}-2-n}\, dw
$$
and for $\alpha>1$
$$
\omega_n\E^{\frac{\alpha-1}{\alpha}\mathcal{E}_\alpha(\Omega, z_0)}=\int_{\mathbb{S}^n} u_{z_0}^{1-\frac{1}{\alpha}}(x)\, d\theta(x)=\frac{1}{1-\frac{1}{\alpha}}\int_{\mathbb{R}^{n+1}\setminus \Omega^0_{z_0}} |w|^{\frac{1}{\alpha}-2-n}\, dw.
$$
That is, $\int_{\mathbb{S}^n}  u^{1-\frac{1}{\alpha}}_{z_0}$ is the weighted  volume of $\Omega^0_{z_0}$ for $\alpha<1$,  or the weighted volume of $\mathbb{R}^{n+1}\setminus \Omega^0_{z_0}$ for $\alpha>1$, with respect to the measure $|w|^{\frac1\alpha-n-2}\,dw$.  In particular, for any $z_0$ with  $|\Omega^*_{z_0}|\le |B(1)|$, we have
$\int_{\mathbb{S}^n}  u_{z_0}^{1-\frac{1}{\alpha}}(x)\, d\theta(x)\le 1$ for $\alpha<1$, and $\int_{\mathbb{S}^n}  u_{z_0}^{1-\frac{1}{\alpha}}(x)\, d\theta(x)\ge 1$ for $\alpha>1$. Moreover, for $0<\alpha<1$, there is a unique point $z_0\in \operatorname{Int}(\Omega)$ such that $\mathcal{E}_\alpha(\Omega)=\mathcal{E}_\alpha(\Omega, z_0)$, which satisfies
$$
\int_{\Omega^0_{z_0}} \frac{w}{|w|^{n+2-\frac{1}{\alpha}}}\, dw=0.
$$
That is, $z_0$ is the unique point for which the center of mass of $\Omega^0_{z_0}$ with respect to the weighted  measure $\frac{d\, w}{|w|^{n+2-\frac{1}{\alpha}}}$ lies at the origin.
\end{proposition}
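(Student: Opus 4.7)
The plan is to derive everything from a single polar-coordinate change of variables, together with Proposition \ref{E-incr} and a convexity/coercivity analysis. The first equality in each displayed identity is just the definition \eqref{eq:def-e-z} rewritten. For the second, write $w=rx$ with $x\in\bfS^n$ and $r\ge 0$, so $dw=r^n\,dr\,d\theta(x)$, and recall from \eqref{def-dual-body1} that $\Omega^0_{z_0}=\{rx:\ 0\le r\le 1/u_{z_0}(x)\}$. Then
\[
\int_{\Omega^0_{z_0}}|w|^{1/\alpha-2-n}\,dw=\int_{\bfS^n}\int_0^{1/u_{z_0}(x)}r^{1/\alpha-2}\,dr\,d\theta(x).
\]
For $\alpha<1$ the exponent $1/\alpha-2>-1$ makes the inner integral convergent, yielding the first identity with prefactor $1/(1/\alpha-1)$. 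For $\alpha>1$ the integrand is not integrable at $r=0$ but is integrable at $r=\infty$, so the same calculation on $\bfR^{n+1}\setminus\Omega^0_{z_0}$ gives the second identity.

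For the sign inequalities, the identity $\mathcal{E}_{1/(n+2)}(\Omega,z_0)=-\frac{1}{n+1}\log\bigl((n+1)|\Omega^*_{z_0}|/\omega_n\bigr)$ from the proof of Corollary \ref{key} shows that $|\Omega^*_{z_0}|\le|B(1)|$ forces $\mathcal{E}_{1/(n+2)}(\Omega,z_0)\ge 0$. Proposition \ref{E-incr} then yields $\mathcal{E}_\alpha(\Omega,z_0)\ge 0$ for every $\alpha\ge 1/(n+2)$, and the first displayed identity combined with the sign of $(\alpha-1)/\alpha$ produces the two asserted bounds on the averaged integral of $u_{z_0}^{1-1/\alpha}$.

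Because $u_{z_0}(x)=u(x)-\langle z_0,x\rangle$ is affine in $z_0$ with $\nabla_{z_0}u_{z_0}=-x$, differentiation yields
\[
\nabla_{z_0}\int_{\bfS^n}u_{z_0}^{1-1/\alpha}\,d\theta=(1/\alpha-1)\int_{\bfS^n}u_{z_0}^{-1/\alpha}(x)\,x\,d\theta(x),
\]
and the same polar-coordinate substitution as above shows $\int_{\Omega^0_{z_0}}w/|w|^{n+2-1/\alpha}\,dw=\alpha\int_{\bfS^n}u_{z_0}^{-1/\alpha}(x)\,x\,d\theta(x)$. Hence a critical point of $z_0\mapsto\mathcal{E}_\alpha(\Omega,z_0)$ is exactly a weighted-centroid zero of $\Omega^0_{z_0}$ in the stated sense. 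For $0<\alpha<1$, maximizing $\mathcal{E}_\alpha(\Omega,\cdot)$ amounts to minimizing $F(z_0):=\int_{\bfS^n}u_{z_0}^{1-1/\alpha}\,d\theta$; since $t\mapsto t^{1-1/\alpha}$ is strictly convex on $(0,\infty)$ and $z_0\mapsto u_{z_0}(x)$ is affine, $F$ is strictly convex on $\operatorname{Int}(\Omega)$ (strictness because $\{x:\langle z_1-z_0,x\rangle\neq 0\}$ has full spherical measure whenever $z_1\neq z_0$), giving uniqueness of any interior minimizer.

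The main obstacle is confirming that the minimizer lies in the interior for the full range $0<\alpha<1$. For $\alpha$ small enough the support function vanishes to sufficient order at $\partial\Omega$ that $F\equiv+\infty$ on $\partial\Omega$ and coercivity is immediate, but for $\alpha$ closer to $1$ the integral $F$ may remain finite up to the boundary; in that regime one must instead verify that at any $z^*\in\partial\Omega$ with outward normal $\nu^*$, the one-sided derivative $\nabla F(z^*)\cdot\nu^*$ is strictly positive. This follows from the fact that $u_{z^*}^{-1/\alpha}(x)$ concentrates near $x=\nu^*$, where $\langle x,\nu^*\rangle>0$, so the integral representing this derivative is strictly positive and moving inward strictly decreases $F$. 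Combined with the critical-point equation, this identifies the unique interior minimizer as the stated weighted centroid of $\Omega^0_{z_0}$.
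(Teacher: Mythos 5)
Your polar-coordinate computation of the two identities and of the first-variation/weighted-centroid equivalence is exactly the paper's calculation. For the inequalities $\aint_{\bfS^n}u_{z_0}^{1-\frac1\alpha}\,d\theta\le 1$ (resp.\ $\ge 1$) you take a genuinely different route: the paper compares $\Omega^0_{z_0}$ with $B(1)$ directly, using that $|\Omega^*_{z_0}|\le|B(1)|$ forces $|B(1)\setminus\Omega^0_{z_0}|\ge|\Omega^0_{z_0}\setminus B(1)|$ together with the pointwise comparison of the weight $|w|^{\frac1\alpha-n-2}$ with $1$ inside and outside the unit ball, whereas you pass through ${\mathcal E}_{\frac1{n+2}}(\Omega,z_0)\ge 0$ and the monotonicity in $\alpha$ of Proposition \ref{E-incr}. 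Both arguments implicitly need $\alpha\ge\frac1{n+2}$ (the paper's pointwise comparison fails below that threshold just as your monotonicity does), so nothing is lost; your version is arguably cleaner since it reuses the mechanism of Corollary \ref{key} pointwise in $z_0$. The strict convexity of $z_0\mapsto\int_{\bfS^n}u_{z_0}^{1-\frac1\alpha}\,d\theta$ correctly yields uniqueness of any interior minimizer.

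The gap is in the interiority/existence step, which you rightly flag as the crux but do not prove. For a nontrivial range of $\alpha<1$ the boundary integrals are finite — for instance, if $z^*$ lies in the relative interior of a facet then $u_{z^*}$ vanishes only linearly at its single zero direction and $\int_{\bfS^n}u_{z^*}^{-\frac1\alpha}\,d\theta<\infty$ for $\alpha>\frac1n$ — and in that regime the assertion that $\int_{\bfS^n}u_{z^*}^{-\frac1\alpha}\langle x,\nu^*\rangle\,d\theta>0$ ``because the weight concentrates where $\langle x,\nu^*\rangle>0$'' is not a proof: a finite weight concentrated on a set where the integrand is positive does not by itself outweigh the bounded negative contribution from the hemisphere $\{\langle x,\nu^*\rangle<0\}$, and you supply no quantitative comparison. (In the ball and cube examples the inequality can be checked by explicit computation or a reflection trick, but neither generalizes automatically.) A further inaccuracy: the zero set of $u_{z^*}$ is the whole normal cone $N(z^*)\cap\bfS^n$, not $\{\nu^*\}$; the direction along which one has a sign on that set is the one pointing to the incenter, and at a non-smooth boundary point $\langle x,\nu^*\rangle>0$ can fail on part of the concentration set for a badly chosen outward normal $\nu^*$. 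Note that the paper does not prove this step inside the proposition either: existence, uniqueness and interiority of the entropy point are deferred to Lemma \ref{e-p}, i.e.\ to the argument of Lemma 2.3 of \cite{Guan-N}. To complete your proof you should either supply an actual boundary estimate establishing the sign of the one-sided derivative, or cite that lemma as the paper does.
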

\begin{proof} Direct calculation yields
\begin{eqnarray*}
\aint_{\mathbb{S}^n}  u_{z_0}^{1-\frac{1}{\alpha}}(x)\, d\theta(x)&=&\frac{1}{\frac{1}{\alpha}-1}\aint_{\mathbb{S}^n}\int_0^{\frac{1}{u_{z_0}(x)}}r^{\frac{1}{\alpha}-2}
\, dr\, d\theta(x)\\
&=&\frac{1}{\frac{1}{\alpha}-1} \aint_{\mathbb{S}^n} \int_0^{\frac{1}{u}} r^{\frac{1}{\alpha}-2-n}d\, w   \\
&=& \frac{1}{\frac{1}{\alpha}-1}\cdot \frac{1}{\omega_n} \int_{\Omega^{0}_{z_0}} |w|^{\frac{1}{\alpha}-n-2}\, d\, w.
\end{eqnarray*}
 This proves the identity for $\alpha<1$. For $\alpha>1$ the computation is the same.

 If $|\Omega^*_{z_0}|\le |B(1)|$, then $|B(1)\setminus \Omega^0_{z_0}|\ge |\Omega^0_{z_0}\setminus B(1)|$. Hence we have
 \begin{eqnarray*}
 \frac{1}{\frac{1}{\alpha}-1} \int_{\Omega^{0}_{z_0}} |w|^{\frac{1}{\alpha}-n-2}\, d\, w &=&   \frac{1}{\frac{1}{\alpha}-1} \left(\int_{\Omega^{0}_{z_0}\cap B(1)}+\int_{\Omega^0_{z_0}\setminus B(1)} |w|^{\frac{1}{\alpha}-n-2}\, d\, w\right)\\
 &\le&  \frac{1}{\frac{1}{\alpha}-1} \left(\int_{\Omega^{0}_{z_0}\cap B(1)}+\int_{B(1)\setminus \Omega^0_{z_0}}  |w|^{\frac{1}{\alpha}-n-2}\, d\, w\right)\\
 &=& \omega_n.
 \end{eqnarray*}
For the inequality above we used that $|w|^{\frac{1}{\alpha}-n-2}\le 1$ for $w\in \Omega^0_{z_0}\setminus B(1)$ while $|w|^{\frac{1}{\alpha}-n-2}\ge 1$ for $w\in B(1)\setminus \Omega^0_{z_0}$. The last equality is via a simple calculation.

For $\alpha>1$ the proof can be done by reversing some of the estimates above. The last statement on the center of mass  follows by a calculation similar to the proof of the first identity in the proposition.\end{proof}

\begin{remark} From the proof one can derive the following identities from which Proposition \ref{key} is also obvious. For $\frac{1}{n+2}\le \alpha< 1$,
\begin{eqnarray}\label{entropy-formula1}
 \E^{\frac{\alpha-1}{\alpha}\mathcal{E}(\Omega, z_0)}&=& 1 -\frac{1}{(\frac{1}{\alpha}-1)\omega_n} \left(d_{n+2-\frac{1}{\alpha}} (\Omega^0_{z_0}, B(1))+(|B(1)\setminus \Omega^0_{z_0}|-|\Omega^0_{z_0}\setminus B(1)|)\right),
\end{eqnarray}
where $d_{n+2-\frac{1}{\alpha}}(A, B)$ denotes the measure of the symmetric difference of two sets $A, B$ with respect to  the measure $\left||w|^{\frac{1}{\alpha}-n-2}-1\right| dw$.
For $\alpha>1$,
\begin{eqnarray}\label{entropy-formula2}
 \E^{\frac{\alpha-1}{\alpha}\mathcal{E}(\Omega, z_0)}&=& 1 +\frac{1}{(1-\frac{1}{\alpha})\omega_n} \left(d_{n+2-\frac{1}{\alpha}} (\Omega^0_{z_0}, B(1))+(|B(1)\setminus \Omega^0_{z_0}|-|\Omega^0_{z_0}\setminus B(1)|)\right).
\end{eqnarray}

\end{remark}

The next lemma is important in obtaining the crucial estimates for the flow.

\begin{lemma}\label{e-p} If $\Omega$ is a bounded convex domain with  $\operatorname{Int}(\Omega)\ne \emptyset$, then there exists a unique point $z_e\in \operatorname{Int}(\Omega)$ such that $\mathcal{E}_\alpha (\Omega)=\mathcal{E}_\alpha(\Omega,z_e)$. Moreover
\begin{equation}\label{1stvar}
\int_{\mathbb S^n}\frac{x_j}{u^{\frac{1}{\alpha}}_{z_e}(x)}\, d\theta(x)=0.
\end{equation}
Furthermore, if $z\neq z_e$ is in $\operatorname{Int}(\Omega)$ then $\mathcal{E}_\alpha(\Omega,z)<\mathcal{E}_\alpha (\Omega)$.
\end{lemma}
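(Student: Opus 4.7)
The strategy is to establish strict concavity of $\Phi(z_0):=\mathcal{E}_\alpha(\Omega,z_0)$ on $\operatorname{Int}(\Omega)$, compute its first variation, and show the supremum over $\overline\Omega$ is attained at an interior point. By strict concavity this point is unique and, being a critical point, will satisfy \eqref{1stvar}.

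Write $u_{z_0}(x)=u_0(x)-\langle z_0,x\rangle$, so $u_{z_0}$ is affine in $z_0$ with $\nabla_{z_0}u_{z_0}(x)=-x$. Since $\Omega$ has full dimension $u_0$ is not the restriction of a linear function to $\mathbb{S}^n$, so $u_{z_0}$ and $u_{z_0'}$ are never proportional when $z_0\neq z_0'$. For $\alpha=1$, $z_0\mapsto\log u_{z_0}(x)$ is strictly concave at each $x$, so $\Phi$ is strictly concave upon averaging. For $\alpha\neq 1$, recognize $\Phi(z_0)=\log\|u_{z_0}\|_{L^p(d\theta/\omega_n)}$ with $p=1-1/\alpha<1$; the reverse Minkowski inequality (valid on positive functions for $p\in(-\infty,0)\cup(0,1)$) gives concavity of $z_0\mapsto\|u_{z_0}\|_p$, strictly so for non-proportional arguments, hence $\Phi$ is strictly concave. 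Differentiating under the integral,
$$
\nabla_{z_0}\Phi(z_0)=-\frac{\int_{\mathbb{S}^n}u_{z_0}^{-1/\alpha}(x)\,x\,d\theta(x)}{\int_{\mathbb{S}^n}u_{z_0}^{1-1/\alpha}(x)\,d\theta(x)}\qquad(\alpha\neq 1),
$$
with the analogous formula $\nabla_{z_0}\Phi(z_0)=-\aint u_{z_0}^{-1}(x)\,x\,d\theta(x)$ for $\alpha=1$; any interior critical point therefore satisfies \eqref{1stvar}.

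To show the sup is attained in the interior, note $\Phi\leq\log(\operatorname{diam}\Omega)$ and $\Phi$ is upper semicontinuous on $\overline\Omega$ (by Fatou's lemma applied to appropriately signed integrands), so the sup is finite and attained. Suppose for contradiction that $z^*\in\partial\Omega$ is a maximizer, pick an outward unit normal $\hat n$ to $\Omega$ at $z^*$, and consider the reflection $R:x\mapsto x-2\langle x,\hat n\rangle\hat n$ on $\mathbb{S}^n$, an isometry preserving $d\theta$. For $\langle x,\hat n\rangle>0$ and any $z\in\Omega$,
$$
\langle z-z^*,Rx\rangle-\langle z-z^*,x\rangle=-2\langle x,\hat n\rangle\langle z-z^*,\hat n\rangle\geq 0,
$$
since $\langle z-z^*,\hat n\rangle\leq 0$, with strictness for $z$ in the interior. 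Taking sup in $z$ yields $u_{z^*}(Rx)>u_{z^*}(x)$ on the hemisphere $\{\langle x,\hat n\rangle>0\}$, and hence $u_{z^*}^{-1/\alpha}(x)>u_{z^*}^{-1/\alpha}(Rx)$ there. Pairing antipodal hemispheres via $R$,
$$
\int_{\mathbb{S}^n}u_{z^*}^{-1/\alpha}(x)\langle x,\hat n\rangle\,d\theta(x)=\int_{\{\langle x,\hat n\rangle>0\}}\bigl[u_{z^*}^{-1/\alpha}(x)-u_{z^*}^{-1/\alpha}(Rx)\bigr]\langle x,\hat n\rangle\,d\theta(x)>0,
$$
possibly $+\infty$. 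Consequently the one-sided directional derivative of $\Phi$ at $z^*$ in the inward direction $-\hat n$ is strictly positive, so $\Phi$ strictly increases into the interior along $-\hat n$, contradicting the maximality of $z^*$. Hence the sup is attained at some $z_e\in\operatorname{Int}(\Omega)$, unique by strict concavity, satisfying $\nabla\Phi(z_e)=0$ which is \eqref{1stvar}; strict concavity also gives $\Phi(z)<\Phi(z_e)$ for all other interior $z$.

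The main technical hurdle is the reflection argument: it delivers the sign of the boundary-gradient integral uniformly in $\alpha>0$ and bypasses the need to case-analyze whether $\Phi$ tends to $-\infty$ at $\partial\Omega$ (for small $\alpha$, where $u_{z^*}^{-1/\alpha}$ fails to be integrable) or remains finite there (for large $\alpha$ on smooth bodies).
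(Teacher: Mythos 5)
Your overall strategy (strict concavity of $z\mapsto\mathcal{E}_\alpha(\Omega,z)$, upper semicontinuity forcing an interior maximizer, first variation at the critical point) is sound, and it is a genuinely self-contained argument where the paper simply defers to Lemma 2.3 of \cite{Guan-N}. The concavity via reverse Minkowski for $p=1-\frac1\alpha\in(-\infty,0)\cup(0,1)$, the non-proportionality of $u_{z_0}$ and $u_{z_1}$ for $z_0\neq z_1$ (full dimension rules out linear support functions), and the gradient formula are all correct, and uniqueness plus the final strict inequality follow from strict concavity as you say. Your reflection device for the sign of the boundary integral is a nice way to avoid case analysis in $\alpha$.

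Two points in the boundary-exclusion step need repair. First, at a non-smooth boundary point $z^*$ you cannot take an \emph{arbitrary} outward normal: for a corner (e.g.\ the apex of a thin triangle) there are outward normals $\hat n$ with $z^*-t\hat n\notin\Omega$ for all $t>0$, in which case "$\Phi$ increases along $-\hat n$" does not contradict maximality of $z^*$ over $\Omega$. You must choose $\hat n$ in the normal cone $N(z^*)$ with $-\hat n$ in the interior of the tangent cone; such a choice always exists (if $N\cap\operatorname{int}(N^{*})=\emptyset$, separating $N$ from $\operatorname{int}(N^*)$ by a hyperplane through the origin yields $w\in N\cap N^{\circ}$, hence $|w|^2\le 0$, a contradiction). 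Second, the claimed pointwise strict inequality $u_{z^*}(Rx)>u_{z^*}(x)$ on the whole open hemisphere is false in general: taking the supremum over $z$ destroys strictness whenever the maximizing point for direction $x$ lies in the supporting hyperplane at $z^*$ (again visible for a triangle). What survives is $u_{z^*}(Rx)\ge u_{z^*}(x)$ everywhere together with strict inequality near $x=\hat n$ (where $u_{z^*}(\hat n)=0<u_{z^*}(-\hat n)$), and that already makes the integral strictly positive, which is all you use. With these two fixes, and a monotone-convergence justification of the one-sided differentiation under the integral at $z^*$ (the difference quotients of $t\mapsto(u+t\langle\hat n,x\rangle)^{1-1/\alpha}$ are monotone in $t$ by convexity/concavity of $s\mapsto s^{1-1/\alpha}$, so the derivative, possibly infinite, passes through the integral), the argument is complete.
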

\begin{proof} Apply the argument in the proof  of Lemma 2.3 in \cite{Guan-N}.
\end{proof}

\begin{remark}
It is crucial to our later argument that the `entropy point' $z_e$ constructed in Lemma \ref{e-p}  is in the interior of $\Omega$.  The proof of this fact is the only place where our argument fails in the more general situation of the anisotropic Gauss curvature flows considered in \cite{Andrews-pjm}.  Accordingly, our main result of smooth convergence to solitons holds for any anisotropy for which it can be etablished that the entropy point is in the interior of the domain.
\end{remark}
\medskip

The next result allows us to control the geometry of a convex body $\Omega$ in terms of the entropy $\mathcal{E}_\alpha (\Omega)$.
  Let $\rho_{+}(\Omega) $  ($\rho_{-}(\Omega)$) be the outer (inner) radius of  a convex body $\Omega$. By definition, the outer radius is the  radius of the smallest ball which contains $\Omega$ and the inner radius is the radius of the biggest ball which is enclosed by $\Omega$. There is also a width function $w(x)$ which is defined as $u_{z_0}(x)+u_{z_0}(-x)$, where $u_{z_0}$ is the support function with respect to $z_0$.
 It is clear that these are independent of the choice of $z_0$. Let $w_{+}$ and $w_{-}$ denote the maximum and minimum of $w(x)$. Recall from  \cites{Jung, Steinhagen} that
 \begin{equation}\label{andrews-eq1}
\frac12 w_+\le \rho_{+}\le w_{+}\sqrt{\frac{2(n+1)}{n+2}}, \quad \quad \frac12 w_-\ge\rho_{-}\ge \frac{w_{-}}{2\sqrt{n+1}}.\end{equation}

\begin{proposition}\label{rho-e}
\begin{enumerate}[label={(\roman*)}]
\item For each
$\alpha>\frac{1}{n+2}$ there exist positive constants $\beta$ and $C$ depending only on $\alpha$ and $n$ such that for every convex body $\Omega$ with $|\Omega|=|B(1)|$,
\begin{equation}\label{C0-lower}
\min\{\rho_{-}(\Omega),  w_{-}(\Omega)\} \ge C^{-1}\E^{-\beta\mathcal{E}_\alpha(\Omega)},
\end{equation}
and
\begin{equation}\label{C0-upper}
\max\{w_{+}(\Omega), \rho_{+}(\Omega)\}\le C \E^{n\beta\mathcal{E}_\alpha(\Omega)}.\end{equation}
\item For any $\alpha\in(0,\frac1{n+2})$ there exists a positive constant $C$ depending only on $\alpha$ and $n$ such that for any convex body $\Omega$ of full dimension,
\begin{equation}\label{C0-l-upper}
\left|{\mathcal E}_\alpha(\Omega)-\frac{1-\alpha(n+2)}{1-\alpha}\log\left(\rho_-(\Omega)\right)\right|\leq C.
\end{equation}
\end{enumerate}
\end{proposition}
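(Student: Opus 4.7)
My approach is to estimate the integral
\[
I(\Omega, z_0) := \aint_{\mathbb{S}^n} u_{z_0}(x)^{1-1/\alpha}\,d\theta(x)
\]
appearing in the definition of $\mathcal{E}_\alpha$ by combining the normalization $|\Omega|=|B(1)|$ (in Part~(i)) or the inradius (in Part~(ii)) with a pointwise two-sided sandwich on $u_{z_0}$. The basic geometric input is this: if $w_-(\Omega) = \epsilon$ is attained in direction $x_0 \in \mathbb{S}^n$, then $\Omega$ is trapped in a slab of width $\epsilon$ normal to $x_0$, and $|\Omega|=|B(1)|$ forces $\Omega$ to extend to distance at least $M \gtrsim \epsilon^{-1/n}$ in at least one direction $x_1 \perp x_0$. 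Placing $z_0$ near the centroid so that $u_{z_0}(\pm x_0) \le \epsilon$, sublinearity of the support function gives the upper bound
\[
u_{z_0}(x) \le \epsilon\,|\langle x,x_0\rangle| + M\,|x - \langle x, x_0\rangle x_0|,
\]
while the existence of a point of $\Omega - z_0$ of norm $\ge M$ along $x_1$ gives $u_{z_0}(x) \ge M\,\max(0,\langle x, x_1\rangle)$.

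For Part~(i), I plug these into $I(\Omega,z_0)$ and split $\mathbb{S}^n$ into a thin cap near $\pm x_0$ and its complement. For $\alpha \ge \tfrac{1}{n+1}$ (in particular for all $\alpha \ge 1$), the integral is dominated by the complement, where $u_{z_0}$ is of order $M$ on a cone of measure bounded below independently of $\epsilon$; this yields $\mathcal{E}_\alpha(\Omega) \ge \mathcal{E}_\alpha(\Omega, z_0) \gtrsim \tfrac{1}{n}\log(1/\epsilon) - C$. For $\alpha \in (\tfrac{1}{n+2}, \tfrac{1}{n+1})$, both regions contribute at the common scale $\epsilon^{n+2-1/\alpha}$ (with $n+2-1/\alpha>0$), giving the slower-growth bound $\mathcal{E}_\alpha(\Omega) \gtrsim \tfrac{(n+2)\alpha-1}{1-\alpha}\log(1/\epsilon) - C$. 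In either case, rearranging produces \eqref{C0-lower} with the corresponding $\beta$. For \eqref{C0-upper}, I use the lower bound on $w_-$ just established, together with a John ellipsoid sandwich: if $E$ is an ellipsoid with semi-axes $a_1 \le \cdots \le a_{n+1}$ satisfying $E \subset \Omega \subset (n+1)E$, then $\prod a_i \asymp 1$ and $a_1 \gtrsim e^{-\beta\mathcal{E}_\alpha(\Omega)}$, so $a_{n+1} \lesssim a_1^{-n} \lesssim e^{n\beta\mathcal{E}_\alpha(\Omega)}$; the two-sided comparisons in \eqref{andrews-eq1} then convert this to $\rho_+$ and $w_+$.

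For Part~(ii), where $\alpha < \tfrac{1}{n+2}$ and the volume normalization is dropped, I rerun the same computation. Now $n+2-1/\alpha < 0$, so $u^{1-1/\alpha}$ is strongly singular in directions where $u$ is small, and $I(\Omega, z_0)$ turns out to be comparable to $\rho_-(\Omega)^{n+2-1/\alpha}$ with constants depending only on $n$ and $\alpha$. Taking logarithms and multiplying by $\alpha/(\alpha-1)$ yields both sides of \eqref{C0-l-upper}. The lower bound on $I$ is obtained by restricting to a thin cap around the direction where $u_{z_0}$ attains its minimum value $\asymp \rho_-$, with $z_0$ taken as the incenter; the upper bound follows from the linear growth of $u_{z_0}$ away from this minimum provided by convexity, combined with the observation that replacing $z_0$ by the supremising entropy point $z_e$ from Lemma~\ref{e-p} changes the quantity only by $O(1)$.

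The main obstacle I anticipate is the uniform two-sided control of $I(\Omega, z_0)$ in terms of $(w_-, M)$ (in Part~(i)) or $\rho_-$ (in Part~(ii)) alone: because the body can be highly elongated in a single transverse direction (cigar-shaped), the lower bound on $u_{z_0}$ holds only in a cone around $x_1$ rather than a rotationally symmetric half-space, and one must check that this cone still carries enough mass on $\mathbb{S}^n$ for the integral estimates to survive. A secondary point is that the constant $\beta$ in Part~(i) changes at the threshold $\alpha = \tfrac{1}{n+1}$, where the dominant regime of the integral switches between the thin cap and its complement, and the two asymptotic regimes must be patched so that the final exponent $\beta$ is uniform on compact subsets of $(\tfrac{1}{n+2}, \infty)$.
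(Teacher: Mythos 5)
Your plan is genuinely different from the paper's (which passes to the weighted-volume representation of Proposition \ref{prop:geom-entropy} and sandwiches the dual body between a John ellipsoid's polar and a coordinate box), and the part of it that works is the easy part: for $\alpha>1$ the exponent $1-\frac1\alpha$ is positive, so your cone lower bound $u_{z_0}\ge M\max(0,\langle x,x_1\rangle)$ does bound $\aint_{\bfS^n}u_{z_0}^{1-\frac1\alpha}\,d\theta$ from below and gives \eqref{C0-lower} immediately. The gap is in the range $\frac1{n+2}<\alpha<1$ (which is the only range the paper actually has to treat, since Proposition \ref{E-incr} reduces large $\alpha$ to this case). There the exponent is negative, so to bound $\mathcal{E}_\alpha(\Omega,z_0)$ from \emph{below} you must bound $\aint u_{z_0}^{-s}\,d\theta$ from \emph{above}, $s=\frac1\alpha-1$, and for that a lower bound on $u_{z_0}$ valid only on a cone about $x_1$ says nothing about the dangerous directions where $u_{z_0}$ is small. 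Your two-direction sandwich records only the thinnest width $\epsilon$ and one long transverse extent $M\gtrsim\epsilon^{-1/n}$; even upgraded to $u_{z_0}(x)\gtrsim M|\langle x,x_1\rangle|$ on all of $\bfS^n$ (which itself needs the centroid reflection inequality $u_{z_0}(-x)\ge\frac1{n+1}u_{z_0}(x)$ — as written, $\max(0,\langle x,x_1\rangle)$ is vacuous on a half-sphere, which already ruins the case $\alpha=1$ for $n\ge2$), the sublevel set $\{u_{z_0}\le t\}$ is only known to lie in a band of measure $\sim t/M$, and integrating this worst case gives $\aint u_{z_0}^{-s}\lesssim M^{-1}\epsilon^{1-s}\le\epsilon^{1+\frac1n-s}$. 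That exponent is nonpositive once $s\ge1+\frac1n$, i.e.\ $\alpha\le\frac{n}{2n+1}$, and since $\frac{n}{2n+1}>\frac1{n+2}$ for $n\ge2$ your estimate degenerates before reaching the endpoint. The needed bound $\aint u_{z_0}^{-s}\le C\,w_-(\Omega)^{\,n+2-\frac1\alpha}$ requires controlling $|\{u_{z_0}\le t\}|$ at every intermediate scale $t$ by the \emph{product} of all $n$ transverse thicknesses, so that the volume normalization can be inserted; this is precisely what the paper's computation over the box $Q=\{|w_i|\le a_i\}$ with $\prod a_i\le(n+1)^{n+1}$ accomplishes. Your "main obstacle" paragraph points at the cigar but misdiagnoses it: the danger is not whether the cone about $x_1$ carries enough mass, but whether its complement carries too much mass where $u_{z_0}$ is small. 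Relatedly, your threshold at $\alpha=\frac1{n+1}$ is an artifact of the two-direction heuristic; nothing happens there in the correct estimate.

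Two further points. First, your upper bound $u_{z_0}(x)\le\epsilon|\langle x,x_0\rangle|+M|x-\langle x,x_0\rangle x_0|$ should have the diameter (which can be as large as $\epsilon^{-n}$) in place of $M$; this matters for the size of the cap where $u_{z_0}\sim\epsilon$ and hence for your "common scale" matching. Second, in part (ii) the claimed comparability $I(\Omega,z_0)\asymp\rho_-(\Omega)^{\,n+2-\frac1\alpha}$ with constants depending only on $n,\alpha$ cannot hold as stated: $I$ is homogeneous of degree $1-\frac1\alpha$ in dilations while the right-hand side has degree $n+2-\frac1\alpha$, so the statement (like \eqref{C0-l-upper} itself) requires the normalization $|\Omega|=|B(1)|$, and its upper-bound half runs into the same multi-axis issue as part (i). To repair the argument you would essentially have to reintroduce all $n+1$ principal directions with the constraint that the product of the extents is comparable to $1$ — at which point you have reconstructed the paper's John ellipsoid proof.
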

\begin{proof}
We use the result of John and L\"owner \cite{John} which provides for any convex body $\Omega$ of full dimension the existence of an ellipsoid $E$ and centre $z_0$ such that $E+z_0\subset\Omega\subset (n+1)E+z_0$.  In our case this means that the volume of $E$ is comparable to that of the unit ball.  The inclusion also implies that $\frac{1}{n+1}E^*\subset \Omega^0_{z_0}\subset E^*$.  The principal axis theorem allows us to rotate so that $E = \{z:\ \sum_{i=1}^{n+1}a_i^{2}z_i^2\leq 1\}$, so that the principal semi-axes of $E$ have lengths $a_i^{-1}$, $1=1,\dots,n+1$.  Then $E^* = \{ w:\ \sum_{i=1}^{n+1}a_i^{-2}w_i^2\leq 1\}$ has principal semi-axes of length $a_i$, $i=1,\dots,n+1$, which we arrange in non-decreasing order.  We estimate the entropy using weighted area expression in Proposition \ref{prop:geom-entropy}, observing that $|w|\geq |w_{n+1}|$ and that $E^*\subset Q=\{w:\ |w_i|\leq a_i,\ i=1,\dots,n+1\}$.  Since ${\mathcal E}_\alpha(\Omega)$ is nondecreasing in $\alpha$, the result for larger $\alpha$ follows from that for smaller $\alpha$, so in the following we assume that $\frac{1}{n+2}<\alpha<\frac{1}{n+1}$.
This gives
\begin{align*}
\frac{1-\alpha}{\alpha}\omega_n\E^{-\frac{1-\alpha}{\alpha}{\mathcal E}_\alpha(\Omega)}&\leq \frac{1-\alpha}{\alpha}\omega_n\E^{-\frac{1-\alpha}{\alpha}{\mathcal E}_\alpha(\Omega,z_0)}\\
&= \int_{\Omega^0_{z_0}}|w|^{\frac1\alpha-2-n}\,d{\mathcal H}^{n+1}\\
&\leq \int_{E^*}|w_{n+1}|^{\frac1\alpha-2-n}\,d{\mathcal H}^{n+1}\\
&\leq \int_{Q}|w_{n+1}|^{\frac1\alpha-2-n}\,d{\mathcal H}^{n+1}\\
& = \int_{-a_{n+1}}^{a_{n+1}} 2^n\left(\prod_{i=1}^na_i\right)|z|^{\frac1\alpha-2-n}\,dz\\
&=\frac{2^{n+1}}{\frac1\alpha-n-1}\left(\prod_{i=1}^na_i\right)a_{n+1}^{\frac1\alpha-n-1}\\
&=\frac{2^{n+1}}{\frac1\alpha-n-1}\left(\prod_{i=1}^{n+1}a_i\right)a_{n+1}^{\frac1\alpha-n-2}.
\end{align*}
Now we observe that $2^{n+1}\prod_{i=1}^{n+1}a_i$ is the volume of $Q$, which is $(n+1)2^{n+1}\omega_n^{-1}$ times the volume of $E^*$. Since $E$ is an ellipsoid we have $|E|\cdot|E^*|=\frac{\omega_n^2}{(n+1)^2}$, and we also have $|\Omega|=\frac{\omega_n}{n+1}\leq (n+1)^{n+1}|E|$ by the inclusion $\Omega\subset (n+1)E+z_0$.  This gives $\prod_{i=1}^{n+1}a_i\leq \left((n+1)\right)^{n+1}$, and we conclude that
$$
\frac{1-\alpha}{\alpha}\omega_n\E^{-\frac{1-\alpha}{\alpha}{\mathcal E}_\alpha(\Omega)}\leq \frac{\left(2(n+1)\right)^{n+1}}{\frac1\alpha-n-1}a_{n+1}^{\frac1\alpha-n-2}.
$$
Finally, we observe that $a_{n+1}^{-1}=\frac{w_-(E)}{2}$, so by the inclusion $E+z_0\subset K$ we have $a_{n+1}^{-1}\leq \frac{w_-(\Omega)}{2}$, implying the estimate
$$
\E^{\frac{1-\alpha}{\alpha}{\mathcal E}_\alpha(\Omega)}\geq\frac{(1-\alpha)(\frac1\alpha-n-1)\omega_n}{\alpha(n+1)^{n+1}}\left(\frac{w_-(\Omega)}{2}\right)^{-(n+2-\frac1\alpha)},
$$
and we have proved the estimate \eqref{C0-lower} (since $w_-$ and $\rho_-$ are comparable in view of \eqref{andrews-eq1}).  To obtain the estimate \eqref{C0-upper}, we observe that $\Omega$ contains the convex hull of the union of a diameter of $\Omega$ with an insphere of $\Omega$, and hence has volume no less than $\frac{\omega_{n-1}}{n(n+1)}\rho_+\rho_-^n$.  Since $|\Omega|=\frac{1}{n+1}\omega_n$, it follows that $\rho_+\leq \frac{n\omega_n}{\omega_{n-1}}\rho_-^{-n}$, so that \eqref{C0-upper} follows from \eqref{C0-lower}.

The estimate (\ref{C0-l-upper}) follows by a similar argument in the case $0<\alpha<\frac{1}{n+2}$.
\end{proof}

We remark that the estimate \eqref{C0-l-upper} will play no further role in our argument (since our results concern only the case $\alpha>\frac{1}{n+2}$), but has an interesting consequence for the flow \eqref{gcf-alpha-nor} with $\alpha<\frac{1}{n+2}$:  By the second inequality in \eqref{C0-l-upper}, any initial convex body $\Omega_0$ with $|\Omega_0|=|B(1)|$ with large diameter (equivalently, small inradius) has entropy far below zero (whereas $B(1)$ has entropy equal to zero).  The monotonicity results proved in the next section for the entropy under \eqref{gcf-alpha-nor} imply that the entropy remains far below zero, and the second inequality in \eqref{C0-l-upper} then implies that the diameter remains large.  In particular the solution of \eqref{gcf-alpha-nor} from such initial data remains far from spherical.  It seems a plausible conjecture that in such situations the diameter will always become unbounded under \eqref{gcf-alpha-nor} if it is initially sufficiently large.

\section{Monotonicity formulae and geometric bounds for the solutions}\label{sec:mono}

It will be useful during our argument to use both the un-normalised flow \eqref{eq-gcf-alpha} and the normalised flow \eqref{gcf-alpha-nor}, so we provide a more precise account of the relation between the two here:  Given a solution $\tilde X:\ M\times[0,T)\to\bfR^{n+1}$ of \eqref{eq-gcf-alpha}, we translate so that $\tilde X_\tau$ shrinks to the origin as $\tau$ approaches $T$, and make the following definitions:
\begin{equation}\label{eq:newtime}
t: =  \frac{1}{n+1}\log\left(\frac{|B(1)|}{|\tilde\Omega_\tau|}\right),
\end{equation}
and (writing $\tilde\Omega_\tau$ for the region enclosed by $\tilde M_t=\tilde X(M,\tau)$)
\begin{equation}\label{eq:rescale}
X(x,t):= \left(\frac{|B(1)|}{|\tilde\Omega_\tau|}\right)^{\frac1{n+1}}\tilde X(x,\tau)= \E^t\tilde X(x,\tau).
\end{equation}
A direct computation then shows that $X$ is a solution of \eqref{gcf-alpha-nor}.  Note that the results of Tso \cite{Tso} and Chow \cite{Chow} guarantee that $|\tilde\Omega_\tau|$ approaches zero as $\tau$ approaches $T$, and consequently $t$ approaches infinity as $\tau$ approaches $T$ and the solution $X_t$ exists for all positive times.  We remark that the entropy of $X$ is related to that of $\tilde X$ as follows:
\begin{equation}\label{eq:rescale-entropy}
{\mathcal E}_\alpha(\Omega_t,e^{t}z_0) = {\mathcal E}_\alpha(\tilde\Omega_\tau, z_0)-\frac1{n+1}\log\left(\frac{|\tilde\Omega_\tau|}{|B(1)|}\right).
\end{equation}
The monotonicity of the entropy $\mathcal{E}_\alpha(\Omega_t)$ along the flow \eqref{eq-gcf-alpha}--\eqref{gcf-alpha-nor} was first proved in \cite{Andrews-IMRN}. The result below is a refinement of it.

\begin{theorem}\label{mono1} Assume that $\alpha>0$.  Then
\begin{enumerate}[label={(\roman*).}]
\item Under the un-normalised flow \eqref{eq-gcf-alpha}, if $z_0\in\operatorname{Int}(\tilde\Omega_{\tau_1})$ then $z_0\in\operatorname{Int}(\tilde\Omega_\tau)$ for all $\tau\in[0,\tau_1]$, and ${\mathcal E}(\tilde\Omega_\tau,z_0)-\frac1{n+1}\log\left(\frac{|\tilde\Omega_\tau|}{|B(1)|}\right)$ is non-increasing, strictly unless $\tilde\Omega_\tau$ is a soliton shrinking to $z_0$.  Furthermore we have the following for $0\leq \tau_0<\tau_1<T$:
\begin{equation}\label{eq:entropy-decrease}
{\mathcal E}_\alpha(\tilde\Omega_{\tau_1},z_0)-{\mathcal E}_\alpha(\tilde\Omega_{\tau_0},z_0)-\frac{1}{n+1}\log\left(\frac{|\tilde\Omega_{\tau_1}|}{|\tilde\Omega_{\tau_0}|}\right)=-\int_{\tau_0}^{\tau_1}
\left[\frac{\int_{\bfS^n}\tilde f_{z_0}^{1+\frac1\alpha}\,d\tilde\sigma}{\int_{\bfS^n}\tilde f_{z_0}^{\frac1\alpha}\,d\tilde\sigma}-\frac{\int_{\bfS^n}\tilde f_{z_0}\,d\tilde\sigma}{\int_{\bfS^n}\,d\tilde\sigma}
\right]\,d\tau\leq 0,
\end{equation}
where $\tilde f_{z_0}(x,\tau)=\frac{\tilde K^\alpha(x,\tau)}{\tilde u_{z_0}(x, \tau)}$, $d\tilde\sigma(x)=\frac{\tilde u_{z_0}(x,\tau)}{\tilde K(x, \tau)}\, d\theta(x)$.
\item Under the normalised flow \eqref{gcf-alpha-nor}, ${\mathcal E}_\alpha(\Omega_t)$ is non-increasing, strictly unless $\Omega_0$ is a soliton.  Furthermore,  $\mathcal{E}_\alpha^\infty:=\lim_{t\to \infty} \mathcal{E}_{\alpha}(\Omega_t)$ exists if $\alpha\geq\frac1{n+2}$, and we have the following inequality:
\begin{equation}\label{eq:entropy-nor}
{\mathcal E}_\alpha(\Omega_{t_0})-{\mathcal E}_\alpha^\infty\leq -\int_{t_0}^\infty \left[\frac{\int_{\bfS^n}f^{1+\frac1\alpha}\,d\sigma_t\cdot \int_{\bfS^n}\,d\sigma_t}
{\int_{\bfS^n}f^{\frac1\alpha}\,d\sigma_t\cdot\int_{\bfS^n} f\,d\sigma_t}-1\right]\,dt\leq 0.
\end{equation}
Here $ f(x,t)=\frac{ K^\alpha(x,t)}{ u(x, t)}$, $d\sigma_t(x)=\frac{ u(x,t)}{ K(x, t)}\, d\theta(x)$.
\end{enumerate}
\end{theorem}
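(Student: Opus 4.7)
I would differentiate the entropy and volume along the flow directly. Under \eqref{eq-gcf-alpha}, the envelope characterization of the support function gives
$$
\partial_\tau \tilde u_{z_0}(x,\tau) = \langle \partial_\tau \tilde X, x\rangle = -\tilde K^\alpha(x,\tau),
$$
while the standard identity $(n+1)|\tilde\Omega_\tau| = \int_{\bfS^n}\tilde u_{z_0}/\tilde K\,d\theta$ yields $\frac{d}{d\tau}|\tilde\Omega_\tau|=-\int_{\bfS^n}\tilde K^{\alpha-1}\,d\theta$. Because the outward normal speed $-\tilde K^\alpha$ is strictly negative, the bodies nest as closed sets, so any $z_0\in\operatorname{Int}(\tilde\Omega_{\tau_1})$ remains in $\operatorname{Int}(\tilde\Omega_\tau)$ for all $\tau\le\tau_1$. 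Separating the cases $\alpha\ne 1$ and $\alpha=1$ in \eqref{eq:def-e-z} and substituting $\tilde f_{z_0}=\tilde K^\alpha/\tilde u_{z_0}$, $d\tilde\sigma=(\tilde u_{z_0}/\tilde K)\,d\theta$ --- which yield $\tilde f^{1+1/\alpha}d\tilde\sigma=\tilde K^\alpha\tilde u_{z_0}^{-1/\alpha}d\theta$, $\tilde f^{1/\alpha}d\tilde\sigma=\tilde u_{z_0}^{1-1/\alpha}d\theta$, $\tilde f\,d\tilde\sigma=\tilde K^{\alpha-1}d\theta$, and $\int d\tilde\sigma=(n+1)|\tilde\Omega_\tau|$ --- the direct computation of $\frac{d}{d\tau}\bigl[{\mathcal E}_\alpha(\tilde\Omega_\tau,z_0) - \frac{1}{n+1}\log(|\tilde\Omega_\tau|/|B(1)|)\bigr]$ assembles into precisely the integrand of \eqref{eq:entropy-decrease}.

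\textbf{Non-positivity and equality.} The sign of the integrand reduces to the correlation (Chebyshev) inequality
$$
\int_{\bfS^n}\tilde f^{1+1/\alpha}\,d\tilde\sigma \cdot \int_{\bfS^n}d\tilde\sigma \;\geq\; \int_{\bfS^n}\tilde f\,d\tilde\sigma \cdot \int_{\bfS^n}\tilde f^{1/\alpha}\,d\tilde\sigma,
$$
applied to the comonotone pair $(\tilde f,\tilde f^{1/\alpha})$ (both are non-decreasing functions of $\tilde f$, since $\alpha>0$) against the positive measure $d\tilde\sigma$. Equality holds exactly when $\tilde f_{z_0}$ is constant on $\bfS^n$, i.e.\ $\tilde K^\alpha=c\,\tilde u_{z_0}$, which is the equation for a shrinking self-similar soliton about $z_0$.

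\textbf{Strategy for part (ii).} Monotonicity of the supremum $\mathcal{E}_\alpha(\Omega_t)$ along \eqref{gcf-alpha-nor} follows from (i) via the scaling identity \eqref{eq:rescale-entropy}: given $t_1>t_0$, take $z_e^{(1)}\in\operatorname{Int}(\Omega_{t_1})$ to be the entropy point from Lemma \ref{e-p} and set $\tilde z_0:=e^{-t_1}z_e^{(1)}\in\operatorname{Int}(\tilde\Omega_{\tau(t_1)})$. The containment in (i) puts $\tilde z_0$ in every earlier $\operatorname{Int}(\tilde\Omega_\tau)$, and combining (i) with \eqref{eq:rescale-entropy} gives
$$
{\mathcal E}_\alpha(\Omega_{t_1}) = {\mathcal E}_\alpha(\Omega_{t_1}, z_e^{(1)}) \le {\mathcal E}_\alpha(\Omega_{t_0}, e^{t_0-t_1}z_e^{(1)}) \le {\mathcal E}_\alpha(\Omega_{t_0}),
$$
strictly unless $\Omega_0$ is a soliton. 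For $\alpha\ge\frac1{n+2}$, Corollary \ref{key} gives ${\mathcal E}_\alpha(\Omega_t)\ge 0$, so the monotone limit $\mathcal{E}_\alpha^\infty$ exists. For the integrated inequality \eqref{eq:entropy-nor}, from \eqref{gcf-alpha-nor} one computes, for fixed $z_0$,
$$
\partial_t u_{z_0} = -\frac{K^\alpha}{\aint_{\bfS^n}K^{\alpha-1}} + u_{z_0} + \langle z_0, x\rangle,
$$
then differentiates $\mathcal{E}_\alpha(\Omega_t,z_0)$; using $|\Omega_t|=|B(1)|$ (so $\int d\sigma_t=\omega_n$) and the first-variation identity \eqref{1stvar} at $z_0=z_e(t)$ (which kills the $\langle z_0,x\rangle$ contribution), the remaining terms collapse to a multiple of $1-R(t)$, where $R(t):=\frac{\int_{\bfS^n}f^{1+1/\alpha}d\sigma_t\cdot\int_{\bfS^n}d\sigma_t}{\int_{\bfS^n}f^{1/\alpha}d\sigma_t\cdot\int_{\bfS^n}f\,d\sigma_t}\ge 1$ once more by Chebyshev. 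Integrating from $t_0$ to $\infty$ produces \eqref{eq:entropy-nor}.

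\textbf{Main obstacle.} The key technical difficulty is passing from the fixed-$z_0$ identity to the sup: making $\frac{d}{dt}\mathcal{E}_\alpha(\Omega_t) = 1-R(t)$ rigorous requires an envelope/Danskin argument at the varying maximizer $z_e(t)$. One proves continuity of $t\mapsto z_e(t)$ from uniqueness (Lemma \ref{e-p}) and continuous dependence of the support function on $t$, after which \eqref{1stvar} kills the contribution from $\dot z_e(t)$. Alternatively one bypasses Danskin entirely by a telescoping comparison: for any partition $t_0<s_1<\cdots<s_N$, apply the un-normalized monotonicity from (i) and the scaling relation \eqref{eq:rescale-entropy} on each subinterval with $z_0$ chosen as the entropy point at its right endpoint, and pass to the partition limit to recover the desired integral estimate.
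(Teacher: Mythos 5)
Your proposal is correct and follows essentially the same route as the paper: part (i) is the same direct computation of $\partial_\tau\bigl[{\mathcal E}_\alpha(\tilde\Omega_\tau,z_0)-\tfrac{1}{n+1}\log|\tilde\Omega_\tau|\bigr]$ in terms of the support function, and your ``alternative'' telescoping comparison for part (ii) --- pulling back the entropy point of the later body via \eqref{eq:rescale-entropy} and invoking (i) --- is precisely the paper's chain of inequalities \eqref{eq:e-mono}. Your appeal to the Chebyshev correlation inequality for the comonotone pair $(\tilde f,\tilde f^{1/\alpha})$ is equivalent to the paper's H\"older argument (multiply the two H\"older estimates $\int \tilde f\,d\tilde\sigma\le(\int\tilde f^{1+1/\alpha})^{\frac{\alpha}{1+\alpha}}(\int 1)^{\frac{1}{1+\alpha}}$ and $\int\tilde f^{1/\alpha}\,d\tilde\sigma\le(\int\tilde f^{1+1/\alpha})^{\frac{1}{1+\alpha}}(\int 1)^{\frac{\alpha}{1+\alpha}}$) and yields the same equality case $\tilde f_{z_0}\equiv\mathrm{const}$.

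One caution on your primary route to \eqref{eq:entropy-nor}: a Danskin-type differentiation of $t\mapsto{\mathcal E}_\alpha(\Omega_t)$ produces the dissipation functional evaluated at the \emph{moving} entropy point $z_e(t)$, whereas in \eqref{eq:entropy-nor} the quantities $f$ and $d\sigma_t$ are computed with respect to the \emph{origin} (the shrinking limit), which is how the estimate is later used in the proof of Theorem \ref{C0}. To land at the origin you need the paper's device (or your fallback): fix $t_0$, compare against the entropy point $\tilde z_{\tau_1}$ of $\tilde\Omega_{\tau_1}$, and let $\tau_1\to T$, using that $\tilde z_{\tau_1}\in\tilde\Omega_{\tau_1}$ is forced to converge to the shrinking point so that the reference point in both the boundary term and the integrand converges to the origin. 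With that observation added, your argument is complete.
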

\begin{proof} In terms of the support function the flow (\ref{eq-gcf-alpha}) can be written as
\begin{equation}\label{eq-gcf-s}
\tilde u_\tau(x, \tau)=-\tilde K^\alpha(x, \tau),
\end{equation}
while the normalised flow \eqref{gcf-alpha-nor} becomes the following:
\begin{equation}\label{eq-gcf-s-nor}
u_t(x,t)=-\frac{K^\alpha(x,t)}{\aint_{\mathbb{S}^n}  K^{\alpha-1}(y, \tau) d\theta(y)}+u(x,t).
\end{equation}
 Since the origin is assumed to be the shrinking limit of the un-normalized flow, we have $\tilde u(x,\tau)>0$ under \eqref{eq-gcf-alpha}, hence also $u(x,t)>0$ for all $(x,t)\in\mathbb{S}^n\times (0, +\infty)$ under \eqref{gcf-alpha-nor}.
 We first prove the monotonicity under the un-normalised flow:  Since $\tilde M_t$ is shrinking we have $\tilde\Omega_{\tau_1}\subset\tilde\Omega_{\tau}$ for any $\tau<\tau_1$, and if $z_0\in\operatorname{Int}(\tilde\Omega_{\tau_1})$ then we have $\tilde u_{z_0}(x,\tau)>0$ for all $(x,\tau)\in\bfS^n\times[0,\tau_1]$.
 We note that $\int_{\bfS^n} 1\, d\tilde\sigma_\tau = (n+1)|\tilde\Omega_{\tau}|$, $\int_{\bfS^n}\tilde f_{z_0}\,d\tilde\sigma = \int_{\bfS^n} \tilde K^{\alpha-1}\,d\theta$, and $\int_{\bfS^n}\tilde f_{z_0}^{\frac1\alpha}\,d\tilde\sigma = \int_{\bfS^n}\tilde u_{z_0}^{1-\frac1\alpha}\,d\theta$, while $\frac{\partial}{\partial\tau}\int_{\bfS^n}\tilde u_{z_0}^{1-1/\alpha}d\theta = \frac{\alpha-1}{\alpha}\int_{\bfS^n}\tilde u_{z_0}^{-\frac1\alpha}\tilde K^\alpha\,d\theta = \frac{\alpha-1}{\alpha}\int_{\bfS^n}\tilde f_{z_0}^{1+\frac1\alpha}\,d\tilde\sigma$.  This gives
$$
 \frac{\partial}{\partial\tau}\left({\mathcal E}_\alpha(\tilde\Omega_\tau)-\frac1{n+1}\log\left(\frac{|\tilde\Omega_{\tau}|}{|B(1)|}\right)\right)
=-\left[\frac{\int_{\bfS^n}\tilde f_{z_0}^{1+\frac1\alpha}\,d\tilde\sigma}{\int_{\bfS^n}\tilde f_{z_0}^{\frac1\alpha}\,d\tilde\sigma}-\frac{\int_{\bfS^n}\tilde f_{z_0}\,d\tilde\sigma}{\int_{\bfS^n}\,d\tilde\sigma}\right].
$$
The right-hand side is non-positive by the H\"older inequality, with equality if and only if $\tilde f_{z_0}$ is constant, in which case $\tilde K^{\alpha} = c\tilde u_{z_0}$ and $\tilde\Omega_{\tau}$ is a soliton shrinking to $z_0$.

 The monotonicity for the normalized flow follows:  If $t_1>t_0$ then ${\mathcal E}_\alpha(\Omega_{t_i}) = {\mathcal E}_\alpha(\tilde\Omega_{\tau_i})-\frac{1}{n+1}\log\left(\frac{|B(1)|}{|\tilde\Omega_{\tau_i}|}\right)$ for $i=1,2$, where $\tau_i$ corresponds to $t_i$ under \eqref{eq:newtime}.  Let $\tilde z_1\in\operatorname{Int}(\tilde\Omega_{\tau_1})$ be the entropy point of $\tilde\Omega_{\tau_1}$.  Then we have
 \begin{align}\label{eq:e-mono}
 {\mathcal E}_\alpha(\Omega_{t_1})&={\mathcal E}_\alpha(\tilde\Omega_{\tau_1})+\frac1{n+1}\log\left(\frac{|B(1)|}{|\tilde\Omega_{\tau_1}|}\right)\\
 &={\mathcal E}_\alpha(\tilde\Omega_{\tau_1},\tilde z_1)+\frac1{n+1}\log\left(\frac{|B(1)|}{|\tilde\Omega_{\tau_1}|}\right)\notag\\
 &={\mathcal E}_\alpha(\tilde\Omega_{\tau_0},\tilde z_1)+\frac1{n+1}\log\left(\frac{|B(1)|}{|\tilde\Omega_{\tau_0}|}\right)-\int_{\tau_0}^{\tau_1}\left[\frac{\int_{\bfS^n}\tilde f_{\tilde z_1}^{1+\frac1\alpha}\,d\tilde\sigma}{\int_{\bfS^n}\tilde f_{\tilde z_1}^{\frac1\alpha}\,d\tilde\sigma}-\frac{\int_{\bfS^n}\tilde f_{\tilde z_1}\,d\tilde\sigma}{\int_{\bfS^n}\,d\tilde\sigma}
\right]\,d\tau\notag\\
&\leq {\mathcal E}_\alpha(\tilde\Omega_{\tau_0})+\frac1{n+1}\log\left(\frac{|B(1)|}{|\tilde\Omega_{\tau_0}|}\right)-\int_{\tau_0}^{\tau_1}\left[\frac{\int_{\bfS^n}\tilde f_{\tilde z_1}^{1+\frac1\alpha}\,d\tilde\sigma}{\int_{\bfS^n}\tilde f_{\tilde z_1}^{\frac1\alpha}\,d\tilde\sigma}-\frac{\int_{\bfS^n}\tilde f_{\tilde z_1}\,d\tilde\sigma}{\int_{\bfS^n}\,d\tilde\sigma}
\right]\,d\tau\notag\\
&={\mathcal E}_\alpha(\Omega_{t_0})-\int_{\tau_0}^{\tau_1}\left[\frac{\int_{\bfS^n}\tilde f_{\tilde z_1}^{1+\frac1\alpha}\,d\tilde\sigma}{\int_{\bfS^n}\tilde f_{\tilde z_1}^{\frac1\alpha}\,d\tilde\sigma}-\frac{\int_{\bfS^n}\tilde f_{\tilde z_1}\,d\tilde\sigma}{\int_{\bfS^n}\,d\tilde\sigma}
\right]\,d\tau.\notag
 \end{align}
 The monotonicity of ${\mathcal E}_\alpha(\Omega_t)$ follows.  Since ${\mathcal E}_\alpha(\Omega_t)$ is bounded below by zero for $\alpha\geq \frac{1}{n+2}$ by Corollary \ref{key}, it follows that ${\mathcal E}_\alpha^\infty$ exists and is non-negative.  The expression \eqref{eq:entropy-nor} follows by taking the limit $t\to\infty$ and changing variables from $\tau$ to $t$ in the integral, noting that in this limit $\tau\to T$ the entropy point  $\tilde z_\tau\in\tilde\Omega_\tau$ converges to the origin.
\end{proof}

\begin{corollary}\label{non-collapsing} Let $M_t=\partial\Omega_t$ be a solution to (\ref{gcf-alpha-nor}) with $|\Omega_t|=|B(1)|$ and $\alpha>\frac{1}{n+2}$.  Then there exists $C=C(\Omega_0)$ such that
\begin{equation}\label{est-rho}
\max\{w_{+}(\Omega_t), \rho_{+}(\Omega_t)\}\le C, \quad \min\{\rho_{-}(\Omega_t),  w_{-}(\Omega_t)\} \ge \frac{1}{C},
\end{equation}
for all $t>0$.
\end{corollary}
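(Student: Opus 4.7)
The plan is to chain together three ingredients already established in the paper: the monotonicity of the entropy along the normalised flow (Theorem \ref{mono1}(ii)), the nonnegativity of the entropy for volume-normalised bodies (Corollary \ref{key}), and the two-sided geometric control of a convex body by its entropy (Proposition \ref{rho-e}(i)). In effect, the entropy serves as a scalar quantity pinched between two explicit values along the flow, and the pinching translates directly into uniform geometric bounds.

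First I would observe that because $|\Omega_t|=|B(1)|$ is preserved under \eqref{gcf-alpha-nor} and $\alpha>\tfrac{1}{n+2}$, Corollary \ref{key} yields the lower bound
\[
\mathcal{E}_\alpha(\Omega_t)\geq 0 \qquad \text{for all } t\geq 0.
\]
Next, Theorem \ref{mono1}(ii) asserts that $t\mapsto\mathcal{E}_\alpha(\Omega_t)$ is non-increasing under the normalised flow, so
\[
0\;\leq\;\mathcal{E}_\alpha(\Omega_t)\;\leq\;\mathcal{E}_\alpha(\Omega_0) \qquad \text{for all } t\geq 0.
\]
Since $\Omega_0$ is a smooth, uniformly convex body of full dimension, $\mathcal{E}_\alpha(\Omega_0)$ is a finite constant $C_0=C_0(\Omega_0)$.

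With this uniform entropy bound in hand, I would invoke Proposition \ref{rho-e}(i), which provides constants $\beta=\beta(n,\alpha)$ and $C=C(n,\alpha)$ with
\[
\min\{\rho_-(\Omega_t),w_-(\Omega_t)\}\;\geq\; C^{-1}\,\E^{-\beta\,\mathcal{E}_\alpha(\Omega_t)}\;\geq\;C^{-1}\E^{-\beta C_0},
\]
and
\[
\max\{w_+(\Omega_t),\rho_+(\Omega_t)\}\;\leq\; C\,\E^{n\beta\,\mathcal{E}_\alpha(\Omega_t)}\;\leq\; C\E^{n\beta C_0}.
\]
Relabelling the resulting constant gives \eqref{est-rho} with a constant that depends only on $\Omega_0$ (through $\mathcal{E}_\alpha(\Omega_0)$) and on $n,\alpha$.

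Since all three inputs have already been proved in the paper, there is no real obstacle; the only point that deserves a moment of care is verifying that the hypotheses of Proposition \ref{rho-e}(i) are met at each time, namely $|\Omega_t|=|B(1)|$, which is precisely the normalisation under \eqref{gcf-alpha-nor} as computed in the introduction. The estimate is time-uniform because the entropy is bounded uniformly by its initial value, and because the constants $C,\beta$ in Proposition \ref{rho-e}(i) depend only on $n$ and $\alpha$.
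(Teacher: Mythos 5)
Your proof is correct and follows essentially the same route as the paper, which simply cites the monotonicity of the entropy together with Proposition \ref{rho-e}; you have merely spelled out the details. (The only cosmetic remark: the lower bound $\mathcal{E}_\alpha(\Omega_t)\ge 0$ from Corollary \ref{key} is not actually needed here, since both estimates in Proposition \ref{rho-e}(i) improve as the entropy decreases, so the upper bound $\mathcal{E}_\alpha(\Omega_t)\le\mathcal{E}_\alpha(\Omega_0)$ alone suffices.)
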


\begin{proof}
Since the entropy is non-increasing, the result follows by Proposition \ref{rho-e}.
\end{proof}

The following consequence gives some geometric meaning to the limiting point in terms of the entropy $\mathcal{E}_\alpha (\Omega, z)$.
\begin{corollary}\label{limit-meaning} Let $\Omega$ be a smooth closed bounded  convex domain with $|\Omega|=|B(1)|$. Assume that origin is the shrinking limit of the flow (\ref{eq-gcf-alpha}) with support function $u$. Then
$$
\aint_{\mathbb{S}^n} u^{1-\frac{1}{\alpha}}(x)\, d\theta(x) \le 1, \mbox{ if } \frac{1}{n+2}\le \alpha <1;\quad  \aint_{\mathbb{S}^n} u^{1-\frac{1}{\alpha}}(x)\, d\theta(x)\ge 1 \mbox{ if } \alpha >1.
$$
\end{corollary}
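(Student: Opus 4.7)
The plan is to reduce the claim, via Proposition \ref{prop:geom-entropy}, to the single inequality
\[
\mathcal{E}_\alpha(\Omega,0)\ge 0.
\]
Indeed, for $\alpha\ne 1$ that proposition gives $\aint_{\mathbb S^n} u^{1-\frac1\alpha}\,d\theta = \E^{\frac{\alpha-1}{\alpha}\mathcal{E}_\alpha(\Omega,0)}$, and since $\frac{\alpha-1}{\alpha}$ is positive for $\alpha>1$ and negative for $\frac1{n+2}\le \alpha<1$, the non-negativity of $\mathcal{E}_\alpha(\Omega,0)$ is exactly the content of the two stated inequalities.

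To prove $\mathcal{E}_\alpha(\Omega,0)\ge 0$ I would apply the un-normalised monotonicity of Theorem \ref{mono1}(i), but with a \emph{moving} test point rather than the fixed choice $z_0=0$ (which would only bound $\mathcal{E}_\alpha(\tilde\Omega_{\tau'},0)$, about which nothing is a priori known). For each $\tau'\in(0,T)$, let $\tilde z_{\tau'}\in\operatorname{Int}(\tilde\Omega_{\tau'})$ be the entropy point of $\tilde\Omega_{\tau'}$ supplied by Lemma \ref{e-p}, so that $\mathcal{E}_\alpha(\tilde\Omega_{\tau'},\tilde z_{\tau'})=\mathcal{E}_\alpha(\tilde\Omega_{\tau'})$. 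Since $\{\tilde\Omega_\tau\}$ is nested decreasing, $\tilde z_{\tau'}\in\operatorname{Int}(\tilde\Omega_\tau)$ for every $\tau\in[0,\tau']$, and Theorem \ref{mono1}(i) applied with this $z_0$, $\tau_0=0$ and $\tau_1=\tau'$ yields
\[
\mathcal{E}_\alpha(\Omega,\tilde z_{\tau'})\ge \mathcal{E}_\alpha(\tilde\Omega_{\tau'})-\tfrac{1}{n+1}\log\tfrac{|\tilde\Omega_{\tau'}|}{|B(1)|}=\mathcal{E}_\alpha(\Omega_{t(\tau')}),
\]
the last equality following from \eqref{eq:rescale-entropy} after taking the supremum over $z_0$ on both sides, with $t(\tau')=\tfrac1{n+1}\log(|B(1)|/|\tilde\Omega_{\tau'}|)$. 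By Corollary \ref{key} together with the monotonicity in Theorem \ref{mono1}(ii), $\mathcal{E}_\alpha(\Omega_{t(\tau')})\ge \mathcal{E}_\alpha^\infty\ge 0$.

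Finally I would let $\tau'\to T$. The observation made at the end of the proof of Theorem \ref{mono1} gives $\tilde z_{\tau'}\to 0$. Since the origin lies in the interior of every $\tilde\Omega_\tau$, the function $z_0\mapsto \mathcal{E}_\alpha(\Omega,z_0)$, whose integrand $u_{z_0}(x)=u(x)-\langle z_0,x\rangle$ is smooth in $z_0$ and stays uniformly positive on $\mathbb S^n$ for $z_0$ near the origin, is continuous at $z_0=0$. Passing to the limit yields $\mathcal{E}_\alpha(\Omega,0)=\lim_{\tau'\to T}\mathcal{E}_\alpha(\Omega,\tilde z_{\tau'})\ge 0$. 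The only non-trivial step is this strategic choice of moving test point: with any fixed $z_0$ one cannot invoke Corollary \ref{key}, but taking $z_0=\tilde z_{\tau'}$ converts the right-hand side of the monotonicity inequality into the full entropy $\mathcal{E}_\alpha(\tilde\Omega_{\tau'})=\mathcal{E}_\alpha(\Omega_{t(\tau')})$, which Corollary \ref{key} bounds from below, and the continuity of $z_0\mapsto \mathcal{E}_\alpha(\Omega,z_0)$ transfers this bound to $z_0=0$ in the limit.
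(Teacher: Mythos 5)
Your proof is correct and is essentially the paper's own argument: the paper's one-line proof invokes the third line of \eqref{eq:e-mono} --- which is exactly Theorem \ref{mono1}(i) applied with the moving entropy point $\tilde z_{\tau_1}$ as test point and $\tau_0=0$ --- and then takes $\tau_1\to T$ using $\tilde z_{\tau_1}\to 0$ and Corollary \ref{key}, precisely your reduction to $\mathcal{E}_\alpha(\Omega,0)\ge 0$.
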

\begin{proof} This follows from the expression in the third line of \eqref{eq:e-mono}, after taking the limit $\tau_1\to T$.
\end{proof}

\section{$C^0$--Estimates for the flow by $K^\alpha$}

The main result of this section is to establish a uniform lower bound  for the support function $u(x, t)$ under \eqref{gcf-alpha-nor} for any $\alpha>\frac{1}{n+2}$.  Note that since the entropy is non-increasing under the flow, the estimates of Proposition \ref{rho-e} provide a lower bound for the inradius or minimum width, so we can easily obtain a lower bound on the support function $u$ if we are willing to translate the solution.  The subtle point in the following theorem is to obtain a lower bound for the support function about the origin (which is chosen to be the shrinking limit of the solution) without translations.

\begin{theorem}\label{C0}
Suppose $u(x, t)>0$ is the solution of (\ref{eq-gcf-s-nor}) with initial data $u(x, 0)=u_0(x)$, where $u_0(x)$ is the support function of a convex body $\Omega_0$ with $|\Omega_0|=|B(1)|$, and the corresponding solution of \eqref{eq-gcf-alpha} converges to the origin.  Then there exists $\epsilon =\epsilon(n,\mathcal{E}(\Omega_0))>0$ and $T_0=T(\Omega_0)$ such that for $t\ge T_0$ and $x\in\bfS^n$,
\begin{equation}\label{eq-c01} u(x,t)\ge \epsilon.\end{equation}
 \end{theorem}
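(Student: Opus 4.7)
The plan is to combine an integral estimate on a negative power of $u$ about the origin — derived from the entropy theory — with the uniform Lipschitz regularity of $u(\cdot,t)$ coming from the diameter bound of Corollary \ref{non-collapsing}, and thereby prevent $u(\cdot,t)$ from becoming small anywhere on $\mathbb{S}^n$. The integral estimate is the content of the entropy squeeze
\begin{equation}\label{plan:squeeze}
0\leq \mathcal{E}_\alpha(\Omega_t,0)\leq \mathcal{E}_\alpha(\Omega_0,0)\qquad(t\geq 0).
\end{equation}
The upper bound is Theorem \ref{mono1}(i) applied at the fixed point $z_0=0$; the lower bound is Corollary \ref{limit-meaning} applied to $\Omega_t$, which is legitimate for every $t$ because the unnormalised $K^\alpha$-flow starting from $\Omega_t$ is, up to a dilation and a time-shift, the tail of the original flow and hence also shrinks to the origin. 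For $\alpha<1$, \eqref{plan:squeeze} is the integral bound $\aint_{\mathbb{S}^n}u(\cdot,t)^{1-1/\alpha}\,d\theta\leq 1$; for $\alpha>1$ the inequality is reversed.

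Together with Corollary \ref{non-collapsing} (uniform diameter $D=D(\mathcal{E}_\alpha(\Omega_0),n)$) and the fact that $0\in\Omega_t$, the support function $u(\cdot,t)$ is uniformly $D$-Lipschitz on $\mathbb{S}^n$. If $u(x_0,t)=\epsilon$ at some point $x_0$, then $u\leq 2\epsilon$ on the geodesic ball of radius $\epsilon/D$ around $x_0$, of area at least $c_n(\epsilon/D)^n$. In the easier subrange $\frac{1}{n+2}<\alpha<\frac{1}{n+1}$, \eqref{plan:squeeze} reads $\aint u^{-(1/\alpha-1)}\leq 1$ with exponent $1/\alpha-1>n$, and the left-hand side is bounded below by $c(n,D)\,\epsilon^{n-(1/\alpha-1)}\to\infty$ as $\epsilon\to 0$; this immediately forces $\epsilon\geq\epsilon_0(n,\mathcal{E}_\alpha(\Omega_0))$.

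For $\alpha\geq\frac{1}{n+1}$ the direct Lipschitz-to-integral argument no longer closes by itself — either the exponent $1/\alpha-1$ is too small to force a blow-up as $\epsilon\to 0$, or (for $\alpha\geq 1$) the inequality in \eqref{plan:squeeze} has the wrong sign. In this regime the crucial input is Lemma \ref{e-p}: the entropy functional $z\mapsto\mathcal{E}_\alpha(\Omega_t,z)$ has a unique maximiser $z_e(\Omega_t)$ strictly in the interior, so in order for $\mathcal{E}_\alpha(\Omega_t,0)$ to satisfy \eqref{plan:squeeze} the origin must lie in a uniformly controlled region inside $\Omega_t$. The plan is to turn this qualitative interiority into a quantitative one: starting from the first-variation identity \eqref{1stvar} and a strict-concavity analysis of the entropy functional, one shows that $\mathcal{E}_\alpha(\Omega_t,0)\geq 0$ combined with $\mathcal{E}_\alpha(\Omega_t)\leq\mathcal{E}_\alpha(\Omega_0)$ confines the origin to a region of $\Omega_t$ bounded away from $\partial\Omega_t$ by a positive amount depending only on $n$ and $\mathcal{E}_\alpha(\Omega_0)$ — which is exactly the desired $u(\cdot,t)\geq\epsilon$. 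Upgrading the topological interiority of Lemma \ref{e-p} to this quantitative distance-to-boundary estimate is the main technical obstacle of the proof.
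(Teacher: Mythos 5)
Your argument for the subrange $\tfrac1{n+2}<\alpha<\tfrac1{n+1}$ (the squeeze $0\le\mathcal{E}_\alpha(\Omega_t,0)\le\mathcal{E}_\alpha(\Omega_0,0)$ via Theorem \ref{mono1}(i) and Corollary \ref{limit-meaning}, combined with the Lipschitz bound $|u(x)-u(y)|\le\rho_+|x-y|$) is correct and is more elementary than what the paper does; but that range lies inside $\alpha<\tfrac1n$, where the theorem was already known, so the case that matters is exactly the one you leave open. For that case you offer only a plan, and the plan as stated does not close. The inputs you propose --- $\mathcal{E}_\alpha(\Omega_t,0)\ge 0$ and $\mathcal{E}_\alpha(\Omega_t)\le\mathcal{E}_\alpha(\Omega_0)$ --- fed into the concavity estimate of Lemma \ref{e-stable} yield only
$$
\min\{1,\,D\,|z_e(\Omega_t)-0|^2\}\;\le\;\mathcal{E}_\alpha(\Omega_t)-\mathcal{E}_\alpha(\Omega_t,0)\;\le\;\mathcal{E}_\alpha(\Omega_0),
$$
which is vacuous as soon as $\mathcal{E}_\alpha(\Omega_0)\ge 1$, and even when it is not, it only places the origin within distance $\sqrt{\mathcal{E}_\alpha(\Omega_0)/D}$ of the entropy point --- a quantity with no reason to be smaller than the interiority radius $\epsilon_1$ of Lemma \ref{uniform-entropy-interior}. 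So the ``main technical obstacle'' you identify is not a quantification of Lemma \ref{e-p}; it is a missing dynamical input.

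The missing idea is that the \emph{gap} $\mathcal{E}_\alpha(\Omega_t)-\mathcal{E}_\alpha(\Omega_t,0)$ tends to zero as $t\to\infty$, not merely that it is bounded. This comes from running the monotonicity formula \eqref{eq:e-mono} at the entropy point $\tilde z_1$ of a \emph{later} body $\tilde\Omega_{\tau_1}$: since the unnormalised bodies shrink to the origin, $\tilde z_1\to 0$ as $\tau_1\to T$, and passing to that limit gives $\mathcal{E}_\alpha(\Omega_{t_0},0)\ge\mathcal{E}_\alpha^\infty=\lim_{t}\mathcal{E}_\alpha(\Omega_t)$ for every $t_0$. Together with $\mathcal{E}_\alpha(\Omega_t,0)\le\mathcal{E}_\alpha(\Omega_t)\downarrow\mathcal{E}_\alpha^\infty$ this forces $\mathcal{E}_\alpha(\Omega_t)-\mathcal{E}_\alpha(\Omega_t,0)\to 0$; only then does Lemma \ref{e-stable} give $z_e(\Omega_t)\to 0$, and the uniform interiority of $z_e$ over the compact class $\Gamma_\rho$ (Lemma \ref{uniform-entropy-interior}, via Blaschke selection) converts this into $u(\cdot,t)\ge\epsilon$ for $t\ge T_0$. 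Note that this also explains why the theorem is stated only for $t\ge T_0$: the conclusion is genuinely asymptotic, whereas your proposed squeeze is a time-zero estimate and could not by itself produce a bound that fails for small $t$.
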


The proof is built upon three elementary lemmas.  In order to state these, we first define for each $\rho\in(0,1)$ the following collection of convex bodies:
\begin{equation}
\Gamma_\rho=\left\{ \Omega \subset \mathbb R^{n+1} \text{\ compact, convex\ }\Big|\   \{\rho_+(\Omega), \rho_{-}(\Omega)\}\subset \left[\rho, \frac{1}{\rho}\right] \right\}.
\end{equation}
For each compact convex body $\Omega$ of full dimension, we denote by $z_e(\Omega)$ the `entropy point' of $\Omega$ characterised by Lemma \ref{e-p}.

\begin{lemma}\label{e-stable}  $\mathcal{E}_\alpha(\Omega,z)$ is a concave function of $z$ for any $\Omega$.  Furthermore, for each $\alpha>0$ and $\rho\in(0,1)$ there exists $D>0$ such that for every $\Omega\in\Gamma_\rho$ and every $z\in\operatorname{Int}(\Omega)$,
$$
\mathcal{E}_\alpha(\Omega, z)\leq \mathcal{E}_\alpha(\Omega)-\min\{1,D|z-z_e(\Omega)|^2\}.
$$
\end{lemma}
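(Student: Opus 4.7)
For concavity, I restrict $\mathcal{E}_\alpha(\Omega,\cdot)$ to an arbitrary line $s \mapsto z_0 + se$. Since $u(x,s) := u_{z_0+se}(x) = u_{z_0}(x) - s\langle e, x\rangle$ is affine in $s$, writing $p := 1 - 1/\alpha$, $G := \aint u^p\,d\theta$, and $J_k := \aint u^{p-k}\langle e, x\rangle^k\,d\theta$, a direct computation yields
\[
\frac{d^2}{ds^2}\mathcal{E}_\alpha(\Omega, z_0+se) = (p-1)\frac{J_2}{G} - p\frac{J_1^2}{G^2}.
\]
For $\alpha > 1$ both summands are non-positive since $p \in (0,1)$; for $\alpha = 1$ the formula reduces (as $p \to 0$) to $-\aint u^{-2}\langle e, x\rangle^2\,d\theta \leq 0$; and for $\alpha < 1$, where $p < 0$, Cauchy--Schwarz $J_1^2 \leq G\cdot J_2$ together with $-p < 1-p$ gives $\phi''(s) \leq -J_2/G \leq 0$. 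This establishes concavity in every case.

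For the quantitative inequality the plan hinges on a uniform interior estimate: there exists $\delta_0 = \delta_0(\rho,\alpha,n) > 0$ with $\operatorname{dist}(z_e(\Omega), \partial\Omega) \geq \delta_0$ for every $\Omega \in \Gamma_\rho$. Blaschke's selection theorem makes $\Gamma_\rho$ compact in Hausdorff distance; if no such $\delta_0$ existed, a sequence $\Omega_k \to \Omega_\infty$ with $z_e(\Omega_k) \to z^* \in \partial\Omega_\infty$ would---via uniform convergence of support functions combined with either the divergence $\mathcal{E}_\alpha(\Omega, z) \to -\infty$ as $z \to \partial\Omega$ (for $\alpha < 1$) or a continuous extension of $\mathcal{E}_\alpha(\Omega_\infty,\cdot)$ to $\partial\Omega_\infty$ (for $\alpha \geq 1$)---force $z^*$ to realize the supremum $\mathcal{E}_\alpha(\Omega_\infty)$, contradicting the uniqueness part of Lemma \ref{e-p}. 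The resulting bounds $\delta_0 \leq u_{z_e}(x) \leq 1/\rho$ on $\bfS^n$, the vanishing $J_1 = 0$ at $z = z_e$, and the identity $\aint x \otimes x\,d\theta = (n+1)^{-1}I$ then yield a uniform Hessian bound $-\operatorname{Hess}\mathcal{E}_\alpha(\Omega,\cdot)|_{z_e} \succeq D_0 I$ with $D_0(\rho,\alpha,n) > 0$.

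Taylor's theorem at $z_e$, with a third-order remainder controlled uniformly by the two-sided bounds on $u_{z_e}$, then produces a uniform radius $r_0 > 0$ on which the convex deficit $g(v) := \mathcal{E}_\alpha(\Omega) - \mathcal{E}_\alpha(\Omega, z_e + v)$ satisfies $g(v) \geq (D_0/2)|v|^2$. The same compactness + uniqueness mechanism gives continuity of $\Omega \mapsto z_e(\Omega)$, making the set $\{(\Omega, z) : \Omega \in \Gamma_\rho,\, z \in \Omega,\, |z - z_e(\Omega)| = r_0\}$ compact; strict concavity together with Lemma \ref{e-p} supplies a uniform $h_0 > 0$ with $g \geq h_0$ when $|v| = r_0$. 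Since $g$ is convex with $g(0) = 0 = |\nabla g(0)|$, the function $t \mapsto g(tv)/t$ is non-decreasing for each unit vector $v$, so $g(v) \geq (h_0/r_0)|v|$ for $|v| \geq r_0$. Setting $D := \min\{D_0/2,\, (h_0/r_0)^2\}$ and checking the regimes $|v| \leq r_0$, $r_0 \leq |v| \leq 1/\sqrt{D}$, and $|v| \geq 1/\sqrt{D}$ delivers $g(v) \geq \min\{1, D|v|^2\}$. I expect the main obstacle to be the uniform interior bound for $z_e$, which hinges on Lemma \ref{e-p} and requires a careful case split between $\alpha < 1$ and $\alpha \geq 1$; uniform third-derivative control for the Taylor remainder is the only other technical point.
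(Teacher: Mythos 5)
Your concavity computation is correct and is essentially the paper's: the paper writes $F''(s)$ in the equivalent form \eqref{eq:f''ineq} and uses the same Cauchy--Schwarz observation ($J_1^2\le G\,J_2$ in your notation) to handle $\alpha<1$. For the quantitative estimate, however, you take a genuinely different route. The paper never localises at $z_e$: it bounds $F''(s)\le -2D$ \emph{along the entire segment} from $z_e$ to $z$, by symmetrising the bracket in \eqref{eq:f''ineq} into a double integral of $\bigl(u(y)\,x\cdot\vec a-u(x)\,y\cdot\vec a\bigr)^2$, bounding the denominator by a power of $\operatorname{diam}(\Omega)$, and (for $\alpha<1$) splitting into the cases $F(s)\le F(0)-1$ versus $F(s)>F(0)-1$ --- which is exactly where the $\min\{1,\cdot\}$ in the statement comes from. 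Integrating twice then gives the quadratic decay with no Taylor remainder, no third-derivative control, and crucially \emph{no input about where $z_e$ sits in $\Omega$}. Your route instead establishes the Hessian bound only at $z_e$ and propagates it by Taylor expansion plus the monotonicity of $t\mapsto g(tv)/t$; the arithmetic in your final case split is correct, but the whole construction rests on a uniform interior bound $\operatorname{dist}(z_e(\Omega),\partial\Omega)\ge\delta_0$ and on continuity of $z_e$. In the paper those two facts are Lemmas \ref{e-cont} and \ref{uniform-entropy-interior}, and they are \emph{deduced from} Lemma \ref{e-stable}; so you must prove them independently, as you propose to do by compactness, or the argument is circular. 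Your compactness sketch can be made to work (upper semicontinuity of $z\mapsto{\mathcal E}_\alpha(\Omega,z)$ up to the boundary via Fatou, $\liminf_k{\mathcal E}_\alpha(\Omega_k)\ge{\mathcal E}_\alpha(\Omega_\infty)$ by testing fixed interior points, and then concavity forces the whole segment $[z_e,z^*]$ to attain the supremum, contradicting uniqueness in Lemma \ref{e-p}), but it is the load-bearing and most delicate part of your proof, and it makes the argument considerably longer than the paper's.

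Two smaller points. First, your claimed divergence ${\mathcal E}_\alpha(\Omega,z)\to-\infty$ as $z\to\partial\Omega$ for $\alpha<1$ is false in general: for $\alpha\in(\frac1{n+2},1)$ the exponent $\frac1\alpha-1$ can be less than $n$, in which case $\aint_{\bfS^n}u_z^{-(\frac1\alpha-1)}\,d\theta$ stays bounded as $z$ approaches a smooth boundary point. This is harmless only because your alternative branch (continuous/upper semicontinuous extension to the boundary) covers all $\alpha>0$. Second, your intermediate compactness step producing $h_0$ on the sphere $|v|=r_0$ is redundant: the Taylor bound already gives $g\ge(D_0/2)r_0^2$ there, so you may take $h_0=(D_0/2)r_0^2$ directly.
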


\begin{proof} At $z_e$, $\mathcal{E}_\alpha(\Omega, z)$ attains its maximum with respect to $z$.  Fix $z\neq z_e$, and let $z(s)=z_e+s\vec{a}$, where $\vec{a}=\frac{z-z_e}{|z-z_e|}$.  Define
$F(s)=\mathcal{E}_{\alpha}(\Omega, z(s))$.
By assumption we have $F(0)=\mathcal{E}_\alpha(\Omega)$ and $F'(0)=0$. Direct calculation shows that
\begin{align}\label{eq:f''ineq}
F''(s)&=-\frac{\aint_{\bfS^n}u_{z(s)}(x)^{-\frac{\alpha+1}\alpha}x\cdot\vec{a}\,d\theta(x)}{\alpha\aint_{\bfS^n}u_{z(s)}(x)^{\frac{\alpha-1}\alpha}\,d\theta(x)}+\left(\frac1\alpha-1\right)\frac{\left(\aint_{\bfS^n}u_{z(s)}(x)^{-\frac1\alpha}x\cdot\vec{a}\,d\theta(x)\right)^2}{\left(\aint_{\bfS^n}u_{z(s)}(x)^{\frac{\alpha-1}\alpha}\,d\theta(x)\right)^2}\notag\\
&\leq-\frac1{\alpha\left(\aint_{\bfS^n}u_{z(s)}^{\frac{\alpha-1}\alpha}\,d\theta\right)^{2}}
\left\{\aint_{\bfS^n}u_{z(s)}^{\frac{\alpha-1}\alpha}\,d\theta\cdot\aint_{\bfS^n} u_{z(s)}^{-\frac{1+\alpha}\alpha} \left(x\cdot\vec{a}\right)^2\,d\theta-\left(\aint_{\bfS^n}u^{-\frac1\alpha} x\cdot\vec{a}\,d\theta\right)^2
\right\}
\end{align}
The bracket on the right of \eqref{eq:f''ineq} can be estimated as follows:
\begin{align*}
\aint_{\bfS^n}u_{z(s)}^{\frac{\alpha-1}\alpha}\,d\theta&\cdot\aint_{\bfS^n} u_{z(s)}^{-\frac{1+\alpha}\alpha} \left(x\cdot\vec{a}\right)^2\,d\theta-\left(\aint_{\bfS^n}u^{-\frac1\alpha} x\cdot\vec{a}\,d\theta\right)^2\\
&=\frac12\aint_{\bfS^n}\aint_{\bfS^n} \frac{\left(u_{z(s)}(y)x\cdot\vec{a} - u_{z(s)}(x)y\cdot\vec{a}\right)^2}
{u_{z(s)}(x)^{\frac{1+\alpha}{\alpha}}u_{z(s)}(y)^{\frac{1+\alpha}{\alpha}}}\,d\theta(x)\,d\theta(y)\\
&\geq \frac12 \left(\text{diam}(\Omega)\right)^{-2\frac{1+\alpha}{\alpha}}\aint_{\bfS^n}\aint_{\bfS^n}\left(u_{z(s)}(y)x\cdot\vec{a} - u_{z(s)}(x)y\cdot\vec{a}\right)^2\,d\theta(x)\,d\theta(y)
\end{align*}
The integral on the right is now invariant under translations.  Evaluating at the center of mass where $\int_{\bfS^n}u_z(x)x\cdot\vec{a}\,d\theta(x)=0$, it equals $\frac{2}{n+1}\aint_{\bfS^n} u_z(x)^2\,d\theta(x)\geq \frac{2}{n+1}\left(\aint u_z(x)d\theta(x)\right)^2\geq 2c(n)\text{diam}(\Omega)^2$, since $\aint_{\bfS^n}u_z(x)\,d\theta$ is the mean width of $\Omega$.

Next we control the denominator on the right-hand side of \eqref{eq:f''ineq}:
If $\alpha\geq 1$ then $\aint_{\bfS^n}u_{z(s)}^{\frac{\alpha-1}{\alpha}}\,d\theta\leq \left(\text{diam}(\Omega)\right)^{\frac{\alpha-1}{\alpha}}$.  Thus in these cases we have $F''(s)\leq -2D$, where $D=\frac{c(n)}{(\operatorname{diam}(\Omega))^2}$, and hence $\mathcal{E}_\alpha(\Omega,z)\leq \mathcal{E}_\alpha(\Omega)-D|z-z_e|^2$.

In the case $0<\alpha<1$ we write
$\aint u_{z(s)}^{\frac{\alpha-1}{\alpha}}\,d\theta = \E^{-\frac{1-\alpha}{\alpha}F(s)}$, so we have either $F(s)\leq F(0)-1$ or $F''(s)\leq -2D$, where $2D = c(n)\operatorname{diam}(\Omega)^{-\frac2\alpha}\E^{\frac{2(1-\alpha)}{\alpha}\mathcal{E}_\alpha(\Omega)}\E^{\frac{-2(1-\alpha)}{\alpha}}$, and the estimate of the Lemma follows. We note that $\mathcal{E}_\alpha(\Omega)$ (and hence also $D$) is controlled above and below in terms of $\rho$, in view of Proposition \ref{rho-e}.
\end{proof}

\begin{lemma}\label{e-cont} Fix $\alpha>0$.  For $\Omega\in\Gamma_\rho$ let $z_e(\Omega)$ be the entropy point of $\Omega$.  Then ${\mathcal E}_\alpha(\Omega)$ and $z_e(\Omega)$ are both continuous functions on $\Gamma_\rho$ with respect to Hausdorff distance.
\end{lemma}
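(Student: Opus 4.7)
The approach is to establish continuity in three stages, with the main input being that Hausdorff convergence $\Omega_k\to\Omega$ implies uniform convergence $u_{\Omega_k}\to u_\Omega$ on $\bfS^n$. First, I would establish joint continuity of $(\Omega,z_0)\mapsto\mathcal{E}_\alpha(\Omega,z_0)$ on the open set $\{z_0\in\operatorname{Int}(\Omega)\}$: if $z_k\to z_0\in\operatorname{Int}(\Omega)$, then $u_{(\Omega_k,z_k)}\to u_{(\Omega,z_0)}$ uniformly with a uniform positive lower bound $\tfrac12\operatorname{dist}(z_0,\partial\Omega)$ (eventually) and a uniform upper bound; dominated convergence on this bounded positive range then gives $\mathcal{E}_\alpha(\Omega_k,z_k)\to\mathcal{E}_\alpha(\Omega,z_0)$.

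Next, I would deduce continuity of $\mathcal{E}_\alpha$. Lower semicontinuity follows from the joint continuity with $z_e(\Omega)\in\operatorname{Int}(\Omega_k)$ (eventually) as a test point. For upper semicontinuity, set $z_k:=z_e(\Omega_k)$ and extract a subsequence $z_k\to z_*\in\overline{\Omega}$ (using boundedness from $\Gamma_\rho$). The interior case $z_*\in\operatorname{Int}(\Omega)$ follows from the joint continuity. For $z_*\in\partial\Omega$ I use case-dependent Fatou-type arguments: for $\alpha>1$, the bounded H\"older-continuous integrand $u^{1-1/\alpha}$ converges uniformly to $u_{z_*}^{1-1/\alpha}$, giving a finite limit; for $\alpha=1$, reverse Fatou applied to $\log u_k$ (bounded above by $\log\operatorname{diam}$) gives $\limsup\mathcal{E}_1(\Omega_k,z_k)\leq\aint\log u_{z_*}\,d\theta$; for $\alpha<1$, Fatou on the nonnegative integrand $u_k^{1-1/\alpha}$, combined with the sign reversal from $\alpha/(\alpha-1)<0$, yields $\limsup\mathcal{E}_\alpha(\Omega_k,z_k)\leq\mathcal{E}_\alpha(\Omega,z_*)$. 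In every case $\mathcal{E}_\alpha(\Omega,z_*)\leq\mathcal{E}_\alpha(\Omega)$ since boundary values lie in the sup, so upper semicontinuity follows.

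Finally, for continuity of $z_e$: any subsequential limit $z_k\to z_*$ combined with the previous step forces $\mathcal{E}_\alpha(\Omega,z_*)=\mathcal{E}_\alpha(\Omega)$. When $z_*\in\operatorname{Int}(\Omega)$, uniqueness from Lemma~\ref{e-p} gives $z_*=z_e(\Omega)$. When $z_*\in\partial\Omega$, I apply Lemma~\ref{e-stable} along the segment $z(t)=(1-t)z_e(\Omega)+tz_*$ and pass to the limit $t\to 1$ using the elementary bound $u_{z(t)}=(1-t)u_{z_e}+tu_{z_*}\geq tu_{z_*}$, which for $\alpha<1$ dominates $u_{z(t)}^{1-1/\alpha}$ by $t^{1-1/\alpha}u_{z_*}^{1-1/\alpha}$ uniformly on $t\in[\tfrac12,1]$ and thus establishes continuity of $t\mapsto\mathcal{E}_\alpha(\Omega,z(t))$ at $t=1$ via dominated convergence (with analogous bounds for $\alpha\geq 1$). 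This extends Lemma~\ref{e-stable} to give $\mathcal{E}_\alpha(\Omega,z_*)\leq\mathcal{E}_\alpha(\Omega)-\min\{1,D|z_*-z_e(\Omega)|^2\}$ with $|z_*-z_e(\Omega)|>0$, contradicting $\mathcal{E}_\alpha(\Omega,z_*)=\mathcal{E}_\alpha(\Omega)$. The main obstacle throughout is this boundary-limit scenario: joint continuity breaks down, and one must combine Fatou-type semicontinuity in the three ranges of $\alpha$ with the extension of the strict-concavity estimate of Lemma~\ref{e-stable} up to $\overline{\Omega}$.
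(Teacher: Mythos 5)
Your proposal is correct, but it takes a genuinely different route from the paper. The paper's proof is a short, direct perturbation argument: it exploits the fact that $\E^{\mathcal{E}_\alpha}$ is homogeneous of degree one in the support function, so the multiplicative bound $|u_z-u^0_z|\le\sigma\le\frac{2\sigma}{\eta}u^0_z$ (valid for $z$ near the interior entropy point $z_0$ of $\Omega_0$, where $u^0_{z_0}\ge\eta>0$) converts a Hausdorff perturbation of size $\sigma$ into the additive estimate $|\mathcal{E}_\alpha(\Omega,z)-\mathcal{E}_\alpha(\Omega_0,z)|\le\log\frac{\eta+2\sigma}{\eta-2\sigma}$; combining this with the quantitative concavity of Lemma \ref{e-stable} shows directly that the maximizer of $z\mapsto\mathcal{E}_\alpha(\Omega,z)$ must lie in a ball of radius $O(\sqrt{\sigma})$ about $z_0$, which yields explicit moduli of continuity for both $\mathcal{E}_\alpha$ and $z_e$ simultaneously and never lets the competitor points approach $\partial\Omega$. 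Your argument is instead a soft compactness/semicontinuity scheme: joint continuity on the interior, Fatou-type upper semicontinuity split into the three ranges of $\alpha$ to handle entropy points drifting to the boundary, and an extension of Lemma \ref{e-stable} to $\overline{\Omega}$ via dominated convergence along the segment to rule that drift out. Both are valid; the paper's approach buys quantitative estimates (used nowhere else, but cleaner) and avoids all boundary degeneracy at the outset, while yours is more robust in that it does not need the multiplicative comparison of support functions and would adapt to functionals that are not homogeneous in $u$. Two small points worth making explicit if you write this up: (i) the supremum in \eqref{eq:def-e} is over $z_0\in\Omega$ including $\partial\Omega$, which is what legitimizes your step ``$\mathcal{E}_\alpha(\Omega,z_*)\le\mathcal{E}_\alpha(\Omega)$ for $z_*\in\partial\Omega$''; and (ii) in the segment argument for $\frac1{n+2}<\alpha<1$ you should note separately that if $\aint_{\bfS^n}u_{z_*}^{1-\frac1\alpha}\,d\theta=+\infty$ then $\mathcal{E}_\alpha(\Omega,z_*)=-\infty$ and the desired strict inequality is immediate, since in that case your dominating function $t^{1-\frac1\alpha}u_{z_*}^{1-\frac1\alpha}$ is not integrable.
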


\begin{proof}
Fix $\Omega_0\in\Gamma_\rho$, and let $z_0\in\operatorname{Int}(\Omega_0)$ be the entropy point of
$\Omega_0$. We will prove continuity of the entropy and entropy point at $\Omega_0$.  Let $u^0_z$ be the support function of $\Omega_0$ about $z$, for each $z\in\operatorname{Int}(\Omega_0)$, and let $u_z$ be the support function of a neighbouring convex set $\Omega$.  Since $z_0$ is in the interior of $\Omega_0$, there is
$\eta>0$ such that $u^0_{z_0}(x)\geq\eta$ for all $x\in\bfS^n$.  We may assume that $\eta D\le 1$. It follows that for $|z-z_0|<\frac{\eta}{2}$ we have $u^0_z(x)\geq \frac\eta2$ for all $x\in\bfS^n$.  Now assume that $\sigma=d_{\mathcal H}(\Omega,\Omega_0)<\frac{\eta}{4}$.  Then $\sup_{\bfS^n}|u(x)-u_0(x)|\leq \sigma$, so we have $|u_z(x)-u^0_z(x)|\leq\sigma\leq \frac{2\sigma}{\eta}u^0_z(x)$ for each $x\in\bfS^n$ and each $z$ with $|z-z_0|<\frac\eta{2}$.  Since $\E^{\mathcal{E}_\alpha}$ is homogeneous of degree one in $u$, it follows that for such $z$ we have
$$
\mathcal{E}_\alpha(\Omega_0,z)+\log\left(1-\frac{2\sigma}{\eta}\right)\leq \mathcal{E}_\alpha(\Omega,z)\leq \mathcal{E}_\alpha(\Omega_0,z)+\log\left(1+\frac{2\sigma}{\eta}\right).
$$
In particular, we have $\mathcal{E}_\alpha(\Omega,z_0)\geq \mathcal{E}_\alpha(\Omega_0)+\log(1-\frac{2\sigma}{\eta})$.  On the other hand, by Lemma \ref{e-stable}, provided $\sigma<\frac{\eta}{2}\tanh(\frac{D\eta}{4})$, for any $z$ with $|z-z_0|^2=\frac1D\log\left(\frac{\eta+2\sigma}{\eta-2\sigma}\right)$ we have $D|z-z_0|^2<1$ and so
$$
\mathcal{E}_\alpha(\Omega,z)\leq \mathcal{E}_\alpha(\Omega_0)-D|z-z_0|^2+\log(1+\frac{2\sigma}{\eta})\leq\mathcal{E}_\alpha(\Omega_0)
+\log(1-\frac{2\sigma}{\eta})\leq\mathcal{E}_\alpha(\Omega,z_0).
$$
Since $\mathcal{E}_\alpha(\Omega,z)$ is concave in $z$ by Lemma \ref{e-stable}, it follows that the maximum of $\mathcal{E}_{\alpha}(\Omega,z)$ occurs in the ball $B_{\frac1D\log\left(\frac{\eta+2\sigma}{\eta-2\sigma}\right)}(z_0)$, so we have
$\left|z_e(\Omega)-z_0\right|\leq \sqrt{\frac1D\log\left(\frac{\eta+2\sigma}{\eta-2\sigma}\right)}\leq \sqrt{\frac{2\sigma}{\eta D}}\to 0$ (as $\sigma\to0$), and
$$
\mathcal{E}_\alpha(\Omega_0)+\log(1-\frac{2\sigma}{\eta})\leq \mathcal{E}_\alpha(\Omega)\leq \mathcal{E}_\alpha(\Omega_0)+\log(1+\frac{2\sigma}{\eta}),
$$
so that $|\mathcal{E}_\alpha(\Omega)-\mathcal{E}_\alpha(\Omega_0)|\leq \max\left\{\log(1+\frac{2\sigma}{\eta}),\log\left(\frac{1}{1-\frac{2\sigma}{\eta}}\right)\right\}\to 0$ as $\sigma\to 0$.
\end{proof}

\begin{lemma}\label{uniform-entropy-interior}
For any $\delta>0$ there exists $\varepsilon_1>0$ such that for every $\Omega\in\Gamma_\delta$,
\begin{equation}\label{e-est}
\operatorname{dist}(z_e({\Omega}), \partial \Omega)\ge \epsilon_1.
\end{equation}
\end{lemma}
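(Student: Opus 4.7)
The plan is to argue by contradiction, using Blaschke selection together with the continuity of the entropy point established in Lemma \ref{e-cont} and the strict interiority of the entropy point established in Lemma \ref{e-p}. Suppose the conclusion fails: there exists a sequence $\Omega_k \in \Gamma_\delta$ with $d_k := \operatorname{dist}(z_e(\Omega_k), \partial \Omega_k) \to 0$. Both the quantity $\operatorname{dist}(z_e(\Omega),\partial\Omega)$ and membership in $\Gamma_\delta$ are invariant under translation (the entropy point translates with $\Omega$), so I translate each $\Omega_k$ so that $z_e(\Omega_k) = 0$. Since $\rho_+(\Omega_k) \leq 1/\delta$ and $0 \in \Omega_k$, every $\Omega_k$ lies inside the fixed ball $\overline{B_{2/\delta}(0)}$.

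By the Blaschke selection theorem, a subsequence $\Omega_{k_j}$ converges in Hausdorff distance to a compact convex set $\Omega_\infty$. Because the inner and outer radii are continuous functions on convex bodies under Hausdorff distance, the bounds $\delta \leq \rho_-(\Omega_{k_j}) \leq \rho_+(\Omega_{k_j}) \leq 1/\delta$ pass to the limit, so $\Omega_\infty \in \Gamma_\delta$; in particular, $\operatorname{Int}(\Omega_\infty) \neq \emptyset$. Lemma \ref{e-cont} then applies and gives $z_e(\Omega_{k_j}) \to z_e(\Omega_\infty)$, forcing $z_e(\Omega_\infty) = 0$. On the other hand, the function $x \mapsto \operatorname{dist}(x,\partial\Omega)$ depends continuously on $\Omega$ in the Hausdorff topology (uniformly on compact subsets of $\mathbb{R}^{n+1}$), so
\begin{equation*}
\operatorname{dist}(0, \partial \Omega_\infty) = \lim_{j\to\infty} \operatorname{dist}(0, \partial \Omega_{k_j}) = \lim_{j\to\infty} d_{k_j} = 0,
\end{equation*}
i.e.\ $0 \in \partial \Omega_\infty$. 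This contradicts Lemma \ref{e-p}, which asserts that the entropy point of any bounded convex body of full dimension lies strictly in the interior.

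The quantitative content $\epsilon_1 = \epsilon_1(\delta)$ follows from the same compactness argument: if no such $\epsilon_1$ existed, one would produce precisely a sequence violating the conclusion as above. The main obstacle is really pre-packaged into the earlier lemmas, namely the interiority statement of Lemma \ref{e-p} (whose proof requires the nontrivial analysis of the supremum defining $\mathcal{E}_\alpha(\Omega)$) and the Hausdorff continuity of $z_e$ from Lemma \ref{e-cont}; granted these, the compactness step here is routine.
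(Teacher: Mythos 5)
Your proof is correct and follows essentially the same route as the paper's: contradiction via the Blaschke selection theorem, the Hausdorff continuity of $z_e$ from Lemma \ref{e-cont}, and the strict interiority from Lemma \ref{e-p}. The only difference is that you spell out the translation normalization and the persistence of the $\Gamma_\delta$ bounds in the limit, details the paper leaves implicit.
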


 \begin{proof}
We argue by the contradiction.  Suppose that the statement (\ref{e-est}) is not true. Then there is a sequence
$\{\Omega_k\}$ of domains in $\Gamma_\rho$ such that
\[\operatorname{dist}(z_k, \partial \Omega_k) \to 0, \quad k\to \infty.\]
By Blaschke selection theorem (cf. \cite{Sch}*{Theorem 1.8.7}, there exists a subsequence of $\{\Omega_k\}$
in $\Gamma_\delta $, which we still denote as
$\Omega_k$, converging to a convex body $\Omega_0\in\Gamma_\rho$.  By Lemma \ref{e-cont}, $z_e(\Omega_0)=\lim_{k\to\infty}z_e(\Omega_k)\in\partial\Omega_0$.  This contradicts Lemma \ref{e-p}.
\end{proof}

 Now we are ready to prove Theorem \ref{C0}.

\begin{proof}[Proof of Theorem \ref{C0}]

Note that under \eqref{gcf-alpha-nor} for $\alpha>\frac{1}{n+2}$, we have for all $t\geq 0$ that $|\Omega_t|=|B(1)|$, and (by Proposition \ref{rho-e} and Corollary \ref{non-collapsing}) there exists $\rho>0$ such that $\Omega_t\in\Gamma_\rho$ for every $t\geq 0$.

By (\ref{eq:entropy-nor}) we have that
$$
\mathcal{E}_\alpha(\Omega_t)-\mathcal{E}_\alpha^\infty \le \mathcal{E}_\alpha(\Omega_t, 0)-\mathcal{E}_\alpha^\infty \le 0.
$$
This implies that $\lim_{ t\to \infty} \mathcal{E}_\alpha(\Omega_t, 0)=\mathcal{E}_\alpha^\infty$ and $\lim_{t\to \infty}\left( \mathcal{E}_\alpha(\Omega_t, 0)-\mathcal{E}_\alpha(\Omega_t)\right)=0$. Let $z_e(\Omega(t))$ be the entropy point of $\Omega(t)$. By Lemma (\ref{e-stable}) we have that when $\mathcal{E}_\alpha(\Omega_t,)>\mathcal{E}_\alpha(\Omega_t)-1$,
$$
|z_e(\Omega(t))-0|^2 \le \frac{1}{D} \left|\mathcal{E}_\alpha(\Omega_t, 0)-\mathcal{E}_\alpha(\Omega_t)\right|
$$
which approaches to zero as $t\to \infty$. The claimed result then follows from Lemma \ref{uniform-entropy-interior}.
 \end{proof}

\begin{corollary}\label{C0-sum} Let $u(x, t)$ be as in Theorem \ref{C0}. Then there exists $\Lambda=\Lambda(\Omega_0, \alpha,  n)>0$ such that
\begin{equation}\label{C0-2sides}
\frac{1}{\Lambda}\le u(x, t)\le \Lambda.
\end{equation}
\end{corollary}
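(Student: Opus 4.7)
The plan is to combine the two ingredients already in hand. Theorem~\ref{C0} furnishes $\epsilon>0$ and $T_0>0$ with $u(x,t)\ge\epsilon$ for $t\ge T_0$, while Corollary~\ref{non-collapsing} bounds $\rho_+(\Omega_t)\le C$ uniformly in $t$. What remains is to convert the bound on $\rho_+$ into an upper bound on $u$ with respect to the origin, and to handle the initial interval $[0,T_0]$ for the lower bound.

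For the upper bound I would first observe that, by Theorem~\ref{mono1}(i) together with the hypothesis that the origin is the shrinking limit of the unnormalised flow, $0\in\operatorname{Int}(\Omega_t)$ for every $t\ge 0$. Writing $B(z_t,\rho_+(\Omega_t))$ for the smallest enclosing ball of $\Omega_t$, the inclusion $0\in\Omega_t\subset B(z_t,\rho_+(\Omega_t))$ forces $|z_t|\le\rho_+(\Omega_t)\le C$, so $\Omega_t\subset B(0,2C)$ and consequently
\[
u(x,t)=\sup_{z\in\Omega_t}\langle z,x\rangle\le 2C\qquad\text{for all }(x,t)\in\mathbb{S}^n\times[0,\infty).
\]

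For the lower bound on the initial slab $\mathbb{S}^n\times[0,T_0]$ I would simply invoke compactness: the initial body is smooth and uniformly convex and the normalised flow is smooth on this bounded time interval, with $0\in\operatorname{Int}(\Omega_t)$ throughout by the observation in the previous paragraph. Hence $u$ is continuous and strictly positive on the compact set $\mathbb{S}^n\times[0,T_0]$ and attains a positive minimum $\epsilon'=\epsilon'(\Omega_0)$. Taking $\Lambda=\max\{2C,\,1/\min(\epsilon,\epsilon')\}$ would then yield the claimed two-sided bound.

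The only step I regard as more than routine is the preservation of the origin in $\operatorname{Int}(\Omega_t)$ for \emph{every} $t\ge 0$, not merely for large $t$; this rests on the monotonicity assertion in Theorem~\ref{mono1}(i), which is also the only place where the hypothesis identifying the origin as the shrinking limit of the unnormalised flow is used. Everything else is a direct consequence of the estimates already established.
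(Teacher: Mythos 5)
Your proposal is correct. The upper bound and the lower bound for $t\ge T_0$ coincide with the paper's argument (the paper likewise deduces the upper bound from the diameter bound of Proposition \ref{rho-e} together with $0\in\Omega_t$, and quotes Theorem \ref{C0} for large times). The only place you diverge is the lower bound on the initial slab $[0,T_0]$: you use a soft compactness argument (continuity and strict positivity of $u$ on the compact set $\bfS^n\times[0,T_0]$), whereas the paper exploits the pointwise monotonicity of the unnormalised support function, $\partial_\tau\tilde u=-\tilde K^\alpha\le 0$, which after rescaling gives the explicit bound $u(x,t)\ge \E^{t-T_0}u(x,T_0)\ge\epsilon\,\E^{-T_0}$. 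Both are valid and yield a constant depending only on $\Omega_0$, $\alpha$, $n$; the paper's version is more quantitative (it propagates the constant $\epsilon$ from Theorem \ref{C0} backwards with an explicit factor), while yours requires nothing beyond smoothness of the flow on a finite interval and the observation, which you correctly isolate and justify via Theorem \ref{mono1}(i) and the choice of the origin as the shrinking limit, that $0\in\operatorname{Int}(\Omega_t)$ for every $t\ge0$.
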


\begin{proof}
The upper bound is immediate since the diameter of $\Omega_t$ is bounded by Proposition \ref{rho-e}.  The lower bound for $t\geq T_0$ is provided by Theorem \ref{C0}, and for $t<T_0$ we use the fact that $\tilde u(x,\tau) = u(x,t)\E^{-t}$ is non-increasing in $\tau$, hence in $t$, so we have
$$
u(x,t)\geq \E^{t-T_0}u(x,T_0)\geq \varepsilon\E^{-T_0}.
$$
\end{proof}

\section{$C^2$-estimates }

 In this section we derive uniform $C^2$-estimates from the $C^0$-estimate (\ref{C0-2sides}).
The first is a upper estimate on the Gauss curvature, which was first proved by Tso \cite{Tso} for the case $\alpha=1$ (see also Hamilton \cite{Hamilton-gauss}) and by the first author in \cite{Andrews-pjm} (Theorem 6) for all other $\alpha>0$:

\begin{theorem}\label{C2-u} Suppose $u(x, t)$ is the solution of (\ref{eq-gcf-s-nor}) with initial data $u(x, 0)=u_0(x)$, where $u_0(x)$ is the support function of $\Omega_0$ with $|\Omega_0|=|B(1)|$ and $\Omega_0\in\Gamma_\rho$ for some $\rho\in(0,1)$.  Then there exists  a constant $C=C(n, \alpha,\rho)>0$ such that
\begin{equation}\label{upper-c2}
K(x, t)\le C\min\left\{\sup_{M_0}K, 1+t^{-\frac{n}{1+n\alpha}}\right\}.
\end{equation}
\end{theorem}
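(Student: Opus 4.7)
The plan is to adapt Tso's maximum-principle technique \cite{Tso}, extended to general $\alpha>0$ in \cite{Andrews-pjm}. The crucial enabling input is the uniform $C^{0}$-bound $\tfrac1\Lambda\leq u\leq\Lambda$ from Corollary \ref{C0-sum}. Set $c_0:=\tfrac{1}{2\Lambda}$ so that $u-c_0\geq\tfrac{1}{2\Lambda}>0$ throughout, and consider the auxiliary function
$$
W(x,t):=\frac{K^\alpha(x,t)}{u(x,t)-c_0}.
$$
Since $u-c_0$ is pinched between positive constants depending only on $n,\alpha,\rho$, an upper bound on $K$ is equivalent to one on $W$.

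First I would derive the evolution of $W$ under \eqref{eq-gcf-s-nor}. Using $K^{-1}=\det(\nabla^2 u+uI)$ on $\bfS^n$ and writing $A:=\nabla^2 u+uI$ with inverse $b^{ij}=(A^{-1})^{ij}$, one has $(\log K)_t=-b^{ij}\partial_t A_{ij}$. Inserting $u_t=-K^\alpha/\aint K^{\alpha-1}+u$ and computing gives an equation of the form
$$
W_t=\mathcal L W+\text{(gradient terms in }\nabla W\text{)}+\text{l.o.t.},\qquad \mathcal L:=\frac{\alpha K^\alpha}{\aint_{\bfS^n}K^{\alpha-1}}\,b^{ij}\nabla_i\nabla_j.
$$
At a spatial maximum of $W(\cdot,t)$ the critical-point conditions give $\nabla K^\alpha=\tfrac{K^\alpha}{u-c_0}\nabla u$, and combining $\nabla^2 W\leq 0$ with the arithmetic--geometric mean inequality $\operatorname{tr}(b)\geq n(\det b)^{1/n}=nK^{1/n}$ produces the pointwise differential inequality
$$
\frac{d}{dt}W_{\max}(t)\leq C_1\,W_{\max}^{2}-C_2\,W_{\max}^{2+\frac{1}{n\alpha}},
$$
with $C_1,C_2>0$ depending only on $n,\alpha,\rho$. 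The constants are controlled using the two-sided bounds on $u$ together with the two-sided bounds on $\aint K^{\alpha-1}$, the latter obtained from the diameter and inradius bounds supplied by Proposition \ref{rho-e} and Corollary \ref{non-collapsing}.

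Finally, standard ODE comparison applied to this differential inequality delivers \eqref{upper-c2}. Comparing with $y'=-\tfrac12 C_2 y^{2+1/(n\alpha)}$ gives the smoothing estimate $W_{\max}(t)\leq C(1+t^{-n\alpha/(1+n\alpha)})$, hence $K\leq C(1+t^{-n/(1+n\alpha)})$ via $K^\alpha=W(u-c_0)\leq\Lambda W$. The alternative bound $K\leq C\sup_{M_0}K$ is obtained by running the same auxiliary-function argument on the un-normalised flow \eqref{eq-gcf-alpha} with $\tilde W=\tilde K^\alpha/(\tilde u-\tilde c)$, where $\tilde c$ is chosen as an initial lower bound for $\tilde u$; this yields the bound $\tilde W(t)\leq \max\{\tilde W(0),W^*\}$ with $W^*$ the ODE equilibrium, translating via \eqref{eq:rescale} into a bound $K\leq C\sup_{M_0}K$ on the normalised flow. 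Taking the minimum of the two bounds completes the proof. The main technical obstacle is absorbing the additional lower-order contributions coming from the nonlocal factor $\aint K^{\alpha-1}$ and the linear term $+u$ in \eqref{eq-gcf-s-nor} into the dominant negative term $-C_2 W^{2+1/(n\alpha)}$, without altering the exponent $\tfrac{n}{1+n\alpha}$; this is why the $C^{0}$-bound from Corollary \ref{C0-sum} is used in an essential way.
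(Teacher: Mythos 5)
Your overall strategy -- a Tso-type auxiliary function $W=K^\alpha/(u-c_0)$, the critical-point identities, the trace inequality $\operatorname{tr}(b)\ge nK^{1/n}$, and ODE comparison -- is exactly the mechanism behind the result the paper invokes (its entire proof is a citation of \cite{Andrews-pjm}*{Theorem 6}), and your exponents $2+\tfrac1{n\alpha}$ and $\tfrac{n}{1+n\alpha}$ are correct. However, there is a genuine gap in the way you propose to run the argument directly on the normalised flow \eqref{eq-gcf-s-nor}: you assert that two-sided bounds on the nonlocal factor $\aint_{\bfS^n}K^{\alpha-1}$ follow from the diameter and inradius bounds. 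This is false for $\alpha>1$: a body in $\Gamma_\rho$ can have $\aint K^{\alpha-1}\,d\theta=\aint(\det A)^{1-\alpha}d\theta$ arbitrarily large (curvature concentrating near a nearly conical point), so an upper bound on $\aint K^{\alpha-1}$ is essentially equivalent to the curvature bound you are trying to prove -- the step is circular. This matters quantitatively: both the good term $-C_2W^{2+1/(n\alpha)}$ and the bad term $+C_1W^2$ in your differential inequality carry a factor $\big(\aint K^{\alpha-1}\big)^{-1}$, while the contribution of the ``$+u$'' in \eqref{eq-gcf-s-nor} produces an additional $+C_3W$ with no such factor; if $\aint K^{\alpha-1}$ is uncontrolled from above the negative term cannot absorb $C_3W$ and the ODE comparison fails. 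The standard resolution -- and what \cite{Andrews-pjm}*{Theorem 6} does -- is to run the maximum-principle argument on the \emph{unnormalised} flow \eqref{eq-gcf-s}, where no nonlocal factor appears, and then transfer the estimate to \eqref{eq-gcf-s-nor} via the explicit rescaling \eqref{eq:newtime}--\eqref{eq:rescale}.

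Two secondary points. First, your choice $c_0=\tfrac1{2\Lambda}$ via Corollary \ref{C0-sum} makes the constant depend on $\Omega_0$ (through $T_0$ and the location of the shrinking point), whereas the theorem asserts $C=C(n,\alpha,\rho)$; the cited proof needs only an inball of radius comparable to $\rho$ (translate to an incenter), which is available from Proposition \ref{rho-e} together with the entropy bound $\mathcal{E}_\alpha(\Omega_0)\le C(\rho)$, and avoids the much heavier Theorem \ref{C0}. Second, in your treatment of the bound $K\le C\sup_{M_0}K$ on the unnormalised flow, taking $\tilde c$ to be ``an initial lower bound for $\tilde u$'' does not work: $\tilde u$ is strictly decreasing in $\tau$, so $\tilde u-\tilde c$ does not stay positive. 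One must instead choose $\tilde c$ as (half) the radius of an inball that persists over the time interval considered; and to pass from the resulting bound $W(t)\le\max\{W(0),W^*\}$ to the stated form $K\le C\sup_{M_0}K$ one should also record that $\sup_{M_0}K\ge c(n,\rho)>0$ (comparison with the circumscribed sphere), so that the equilibrium value $W^*$ is itself dominated by $C\,W(0)$.
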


\begin{proof} See \cite{Andrews-pjm}*{Theorem 6}.
\end{proof}

Our crucial new contribution is the following lower bound on the Gauss curvature, which crucially uses the lower bound on the support function from Theorem \ref{C0}:

\begin{theorem}\label{thm-lower-k} Suppose $u(x, t)>0$ is a positive solution of (\ref{eq-gcf-s-nor}), obtained from the un-normalized flow (\ref{gcf-alpha-nor}), with initial data $u(x,0)=u_0(x)$, where $u_0(x)>0$ is the support function of $\Omega_0$ with $|\Omega_0|=|B(1)|$. Then there exists a constant $\epsilon_2=\epsilon_2(n, \Omega_0)>0$ such that
\begin{equation}\label{lower-C2}
K^\alpha(x, t)\ge \epsilon_2.
\end{equation}
\end{theorem}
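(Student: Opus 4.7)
The plan is to apply the maximum principle to an auxiliary function of the form
\[
W(x,t) := \log K^\alpha(x,t) + A\, u(x,t),
\]
for a constant $A > 0$ chosen in terms of the $C^0$ bounds on $u$, and show that $m(t) := \min_{x \in \bfS^n}W(x, t)$ is uniformly bounded below on $[0,\infty)$. Combined with the upper bound $u \le \Lambda$ from Corollary \ref{C0-sum}, this yields $\log K^\alpha \ge m(t) - A\Lambda$, hence the desired $K^\alpha \ge \epsilon_2 > 0$.

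A preliminary step is to establish a uniform positive lower bound on the global averaging factor $\phi(t) := \aint_{\bfS^n} K^{\alpha-1}\,d\theta$. The volume normalization $|\Omega_t|=|B(1)|$ gives $\aint_{\bfS^n} u/K\,d\theta = 1$, and combined with the lower bound $u \ge 1/\Lambda$ from Corollary \ref{C0-sum}, this yields $\aint 1/K \le \Lambda$; by a Chebyshev-type estimate, $K \ge 1/(2\Lambda)$ on a subset of $\bfS^n$ of measure at least $\omega_n/2$. For $\alpha \ge 1$ this immediately gives $\phi(t) \ge (2\Lambda)^{1-\alpha}/2$. For $\alpha < 1$, the upper bound $K \le C_1$ from Theorem \ref{C2-u} combined with Jensen's inequality (using convexity of $x \mapsto x^{\alpha-1}$ for $x > 0$) gives $\phi(t) \ge C_1^{\alpha-1}$. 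Denote the resulting uniform lower bound by $\phi_0 > 0$.

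For the maximum principle step, I would compute $\partial_t W$ using the evolution equations for $u$ and $K^\alpha$ implicit in \eqref{eq-gcf-s-nor}, and impose the first- and second-order conditions at a spatial minimum, namely $(\log K^\alpha)_i = -A u_i$ and $(\log K^\alpha)_{ij} \ge -A u_{ij}$. After contracting with $r^{ij}$, the inverse of the radii of curvature matrix $r_{ij} = u_{ij} + u\delta_{ij}$, the resulting inequality is expected to take the form
\[
\partial_t W\big|_{\min} \ge \frac{K^\alpha}{\phi}\Bigl\{\alpha A^2\, r^{ij}u_iu_j + \alpha H(Au+1) - A(\alpha n+1)\Bigr\} + (Au - \alpha n),
\]
where $H = r^{ij}\delta_{ij}$. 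Choosing $A$ so that $A/\Lambda > \alpha n$ (for instance $A = 2\alpha n \Lambda$) makes $Au - \alpha n \ge \alpha n > 0$ uniformly. A dichotomy then closes the proof: at a spatial minimum of $W$, either the brace is non-negative (giving $\partial_t W \ge \alpha n$), or the brace is negative with magnitude at most $A(\alpha n+1)$, and then $\partial_t W \ge -A(\alpha n+1)K^\alpha/\phi_0 + \alpha n \ge 0$ as long as $K^\alpha$ at the minimum is below an explicit threshold $F_0 := \alpha n \phi_0/[A(\alpha n+1)]$; otherwise $K^\alpha(x_0,t) > F_0$, so that $W(x_0, t) \ge \log F_0 + A/\Lambda$ directly bounds the minimum value. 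A standard envelope argument concludes that $m(t) \ge \min\{m(0),\, \log F_0 + A/\Lambda\}$.

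The main obstacle is that the mean curvature $H$ and the gradient term $r^{ij}u_iu_j$ are a priori unbounded and appear in the evolution equation. The key insight making the argument work is that in $W = \log K^\alpha + Au$, these terms enter $\partial_t W|_{\min}$ with favorable sign (multiplied by the positive coefficient $K^\alpha/\phi$), so they actually help rather than hinder; the only remaining negative contribution, $-A(\alpha n+1) K^\alpha/\phi$, carries a factor of $K^\alpha$ and is therefore negligible precisely when $K^\alpha$ is small---exactly the regime in which the lower bound is needed. Securing the uniform positive lower bound on $\phi$, particularly for $\alpha > 1$, relies critically on both the $C^0$ bound of Corollary \ref{C0-sum} and the upper bound on $K$ of Theorem \ref{C2-u}.
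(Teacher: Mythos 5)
Your proof is correct: I verified the evolution inequality at the spatial minimum (with $r_{ij}=\bar{\nabla}_i\bar{\nabla}_j u+u\bar{g}_{ij}$ one has $\partial_t\log K^\alpha=\frac{\alpha}{\phi}r^{ij}(K^\alpha)_{ij}+\frac{\alpha K^\alpha H}{\phi}-\alpha n$, and the conditions $(K^\alpha)_i=-AK^\alpha u_i$, $(K^\alpha)_{ij}\ge K^\alpha(A^2u_iu_j-Au_{ij})$ together with $r^{ij}u_{ij}=n-uH$ yield exactly your displayed bound), the three-way dichotomy is exhaustive, and the envelope argument gives $m(t)\ge\min\{m(0),\log F_0+A/\Lambda\}$. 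Your lower bound on $\phi$ is also fine, though for $\alpha<1$ the pointwise monotonicity of $x\mapsto x^{\alpha-1}$ already gives $\phi\ge C_1^{\alpha-1}$ without Jensen.

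Your route differs from the paper's \emph{primary} proof, which is not a maximum-principle argument at all: it invokes the integrated speed estimate $\tilde K^\alpha(x,\tau_2)\ge\frac{\tilde u(x,\tau_1)-\tilde u(x,\tau_2)}{(1+n\alpha)(\tau_2-\tau_1)}$ of Andrews--McCoy--Zheng (Lemma \ref{lemma-lower-k}) for the unnormalized flow, chooses $\tau_2$ so that the two-sided $C^0$ bound forces the numerator to be a definite multiple of $\E^{-t}$, and bounds $\tau_2-\tau_1$ by the extinction time of an enclosing sphere. The paper does, however, record an alternate self-contained proof that is essentially your argument in different clothing: it applies the maximum principle to $\log(K^\alpha u^{1+n\alpha})$ rather than $\log K^\alpha+Au$ (an immaterial change given the two-sided bounds on $u$), and closes with a first-touching-time contradiction rather than your threshold dichotomy. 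One genuine difference in your favour: for $\alpha\ge1$ the paper's alternate proof obtains the lower bound on $\aint_{\bfS^n}K^{\alpha-1}$ via the entropy comparison $\mathcal{Z}_\alpha\ge\E^{\mathcal{E}_\alpha}\ge1$, which ultimately rests on the Blaschke--Santal\'o inequality through Corollary \ref{key}, whereas your Chebyshev argument from $\aint_{\bfS^n}u/K=1$ and $u\ge1/\Lambda$ is elementary. Both your proof and the paper's alternate one avoid the Harnack-type input of Lemma \ref{lemma-lower-k}; the paper's primary proof is shorter but imports that lemma.
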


\begin{proof}  The result can be proved by a similar line of argument as in the proof of Theorem  5.2 of \cite{Guan-N}. We provide a different argument here.  We observe that the minimum of $\tilde K$ over $\tilde M_\tau$ is non-decreasing, so we have positive lower bounds on $K$ for any finite time, given by $K(x,t)\geq \left(\inf_{M_0}K\right)\E^{-nt}$.  Thus it suffices to obtain a uniform lower bound for large $t$.

The key result we will use is the following estimate for the normalized flow (see \cites{Smoczyk, AMZ}):

\begin{lemma}\label{lemma-lower-k}
If $\tilde u$ evolves according to \eqref{eq-gcf-s}, then for any $\tau_2>\tau_1$ and any $x\in\bfS^n$ we have
$$
\tilde K^\alpha(x,\tau_2)\geq \frac{\tilde u(x,\tau_1)-\tilde u(x,\tau_2)}{(1+n\alpha)(\tau_2-\tau_1)}.
$$
\end{lemma}

\begin{proof}
See \cite{AMZ}*{Theorem 14}.\end{proof}

To apply this, we first show that for any time $t$ corresponding to an un-rescaled time $\tau_1$, we can choose a suitable $\tau_2>\tau_1$ so that the numerator is uniformly positive, using the bounds from Corollary \ref{C0-2sides}:  Since $\lambda\leq u(x,t) \leq \Lambda$ for all $x$ and $t$, we have for \eqref{eq-gcf-s} the following:
$$
\lambda\E^{-t}\leq \tilde u(x,\tau(t)) \leq \Lambda\E^{-t}.
$$
It follows that if we choose $\tau_1=\tau(t)$ and $\tau_2 = \tau(t+\log(\frac{2\Lambda}{\lambda}))$ then
$$
\tilde u(x,\tau_2)\leq \Lambda\E^{-t}\frac{\lambda}{2\Lambda} = \frac{\lambda}{2}\E^{-t}\leq \tilde u(x,\tau_1)-\frac{\lambda}{2}\E^{-t}.
$$
We also observe that $\tau_2-\tau_1$ is no greater than the  extinction time of $\tilde\Omega_{\tau_1}$.  By comparing with an enclosing sphere of radius $r_+=\Lambda\E^{-t}$, we find the time to extinction is no greater than $\frac{r_+^{1+n\alpha}}{1+n\alpha} = \frac{1}{1+n\alpha}\Lambda^{1+n\alpha}\E^{-(1+n\alpha)t}$.  This gives the estimate
\begin{align*}
K^\alpha\left(x,t+\log\left(\frac{2\Lambda}{\lambda}\right)\right) &= \left(\frac{\lambda}{2\Lambda}\right)^{n\alpha}\E^{-n\alpha t}\tilde K^\alpha(x,\tau_2)\\
&\geq \left(\frac{\lambda}{2\Lambda}\right)^{n\alpha}\E^{-n\alpha t}\frac{\tilde u(x,\tau_1)-\tilde u(x,\tau_2)}{(1+n\alpha)(\tau_2-\tau_1)}\\
&\geq \left(\frac{\lambda}{2\Lambda}\right)^{n\alpha}\E^{-n\alpha t}\frac{\frac{\lambda}{2}\E^{-t}}{\Lambda^{1+n\alpha}\E^{-(1+n\alpha)t}}\\
&=\frac{\lambda^{1+n\alpha}}{(2\Lambda)^{1+2n\alpha}}.
\end{align*}
Since $t\geq 0$ is arbitrary, this provides a uniform lower bound on $K^\alpha$ for all sufficiently large times, and the Theorem is proved.
\end{proof}

  There exists however another proof of Theorem \ref{thm-lower-k} which uses neither  Lemma \ref{lemma-lower-k} nor the differential Harnack estimate. Instead when $\alpha \ne 1$ it needs a simple lemma comparing the entropies. The proof is more self-contained. The following quantity was introduced in \cite{Andrews-pjm} (for the normalized flow (\ref{eq-gcf-s-nor})) generalizing the entropy introduced in \cite{Chow2}:  \[\mathcal{Z}_\alpha (t)\doteqdot \left(\aint_{\mathbb{S}^n} K^{\alpha -1}(x, t)\, d\theta(x)\right)^{\frac{1}{\alpha-1}}.\]
 \begin{lemma}\label{comparison-entropies}
Let $\Omega$ be a convex body with $V(\Omega)=V(B(1))$. For any $\alpha>0$ we have that
$$
\mathcal{Z}_\alpha (\Omega) \ge e^{\mathcal{E}_\alpha(\Omega)}.
$$
 The equality holds if and only if $\lambda u=K^\alpha$ for $\lambda=\aint_{\mathbb{S}^n} K^{\alpha-1}\, d\theta$.
\end{lemma}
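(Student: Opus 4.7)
The plan is to reduce the claim to H\"older's inequality, using the volume identity
$$
\aint_{\bfS^n}\frac{u_{z_0}(x)}{K(x)}\,d\theta(x)=\frac{(n+1)|\Omega|}{\omega_n}=1,
$$
valid for every $z_0\in\operatorname{Int}(\Omega)$ (it is the support-function expression for the enclosed volume, and is already implicit e.g.\ in the identity $\int_{\bfS^n}1\,d\tilde\sigma_\tau=(n+1)|\tilde\Omega_\tau|$ used in the proof of Theorem \ref{mono1}). The algebraic observation driving everything is the pointwise factorisation
$$
u_{z_0}(x)^{(\alpha-1)/\alpha}=\bigl(K^{\alpha-1}(x)\bigr)^{1/\alpha}\cdot\bigl(u_{z_0}(x)/K(x)\bigr)^{(\alpha-1)/\alpha},
$$
in which the second factor already has average value $1$ against the probability measure $d\theta/\omega_n$.

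I would fix $z_0\in\operatorname{Int}(\Omega)$ and, for $\alpha>1$, apply standard H\"older to this product with conjugate exponents $r=\alpha$ and $s=\alpha/(\alpha-1)$; this bounds $\aint u_{z_0}^{(\alpha-1)/\alpha}\,d\theta$ above by $\bigl(\aint K^{\alpha-1}\,d\theta\bigr)^{1/\alpha}$, and raising to the power $\alpha/(\alpha-1)$ and taking logarithms yields exactly ${\mathcal E}_\alpha(\Omega,z_0)\leq \log \mathcal{Z}_\alpha(\Omega)$. For $0<\alpha<1$ the exponent $s=\alpha/(\alpha-1)$ is negative, so reverse H\"older applies and flips the middle inequality; but the outer exponent $1/(\alpha-1)$ is also negative, so the two sign changes cancel and the same conclusion emerges. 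The limiting case $\alpha=1$ I would treat separately by Jensen's inequality applied to the concave function $\log$: since $\aint u_{z_0}/K\,d\theta=1$,
$$
\aint_{\bfS^n}\log\frac{K(x)}{u_{z_0}(x)}\,d\theta=-\aint_{\bfS^n}\log\frac{u_{z_0}(x)}{K(x)}\,d\theta\geq -\log\aint_{\bfS^n}\frac{u_{z_0}(x)}{K(x)}\,d\theta=0,
$$
which is the claim in that case. Taking the supremum over $z_0\in\operatorname{Int}(\Omega)$ of the left-hand sides then upgrades each pointwise inequality to ${\mathcal E}_\alpha(\Omega)\leq \log\mathcal{Z}_\alpha(\Omega)$.

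For the equality clause I would simply trace through the equality conditions of H\"older and (strict concavity of $\log$ in) Jensen: both force $K^{\alpha-1}$ to be proportional to $u_{z_0}/K$ pointwise on $\bfS^n$, i.e.\ $K^\alpha=\lambda u_{z_0}$ with some constant $\lambda$. Integrating the identity $K^{\alpha-1}=\lambda u_{z_0}/K$ against $d\theta/\omega_n$ and using the volume identity once more pins down $\lambda=\aint_{\bfS^n}K^{\alpha-1}\,d\theta$, matching the stated equality condition (at the unique $z_0=z_e$ where the supremum is attained, by Lemma \ref{e-p}). The only mildly delicate point in the plan is keeping track of the direction of the inequality as the H\"older exponent $s=\alpha/(\alpha-1)$ changes sign across $\alpha=1$, but this is routine bookkeeping; the real work is done by the single volume identity.
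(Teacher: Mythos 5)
Your proposal is correct and is essentially the paper's own argument in different clothing: the paper writes $\aint_{\bfS^n}K^{\alpha-1}\,d\theta=\aint_{\bfS^n}\frac{K^\alpha}{u}\,d\sigma$ with $d\sigma=\frac{u}{K}\frac{d\theta}{\omega_n}$ a probability measure (the same volume identity you use) and applies Jensen for $t\mapsto t^\alpha$, which is exactly your H\"older application with exponents $\alpha$ and $\alpha/(\alpha-1)$ read in the opposite direction, with the same sign bookkeeping across $\alpha=1$ and the same equality analysis.
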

\begin{proof}  Without the loss of the generality we may assume that $\mathcal{E}_\alpha(\Omega)$ is attained at $z_0=0$. We simply denote the support function $u_{z_0}(x)$ with respect $z_0$ as $u(x)$. The condition $V(\Omega)=V(B(1))$ yields that $\aint_{\mathbb{S}^n} \frac{u}{K}\,d \theta =1$. Hence viewing $d\sigma=\frac{u}{K} \frac{d\theta}{\omega_n}$ as a probability measure, for $\alpha\in (0, 1)$,
\begin{eqnarray*}
\aint_{\mathbb{S}^n} K^{\alpha -1}\, d\, \theta &=& \aint_{\mathbb{S}^n} \frac{K^\alpha}{u}\, \frac{u}{K}\, d\theta =\aint_{\mathbb{S}^n} \frac{K^\alpha}{u} d \sigma\\
&\le&\left( \aint_{\mathbb{S}^n}\frac{K}{u^{\frac{1}{\alpha}}} d\sigma\right)^{\alpha}= \left( \aint_{\mathbb{S}^n}\frac{K}{u^{\frac{1}{\alpha}}} \, \frac{u}{K}\, d\theta \right)^{\alpha}.
\end{eqnarray*}
The claimed result follows from the above easily. Equality holds if and only if $\frac{K^\alpha}{u}=\lambda$, a constant, which can be determined by $\aint_{\mathbb{S}^n} \frac{u}{K}\, d\theta=1$. For $\alpha>1$,
\begin{eqnarray*}
\aint_{\mathbb{S}^n} K^{\alpha -1}\, d\, \theta &=& \aint_{\mathbb{S}^n} \frac{K^\alpha}{u}\, \frac{u}{K}\, d\theta \ge \left( \aint_{\mathbb{S}^n}\frac{K}{u^{\frac{1}{\alpha}}} \, \frac{u}{K}\, d\theta \right)^{\alpha}.
\end{eqnarray*}
The result follows by taking $\frac{1}{\alpha-1}$ power on the both side. For $\alpha=1$, it can be obtained by taking the limit.
\end{proof}

 Now by Corollary \ref{key} we can conclude that $\mathcal{Z}_\alpha (\Omega)\ge 1$ if the volume is normalized. This is all we need on $\mathcal{Z}_\alpha(\Omega)$ (in particular the monotonicity of $\mathcal{Z}_\alpha(t)$ is not needed).

{\it Alternate proof of Theorem \ref{thm-lower-k}}. Consider the solution $u(x, t)$ of the normalized flow (\ref{eq-gcf-s-nor}). By Corollary \ref{C0-sum} and  Theorem \ref{C2-u} we may assume that $\frac1\Lambda \le u\le \Lambda $ and $K\le \Lambda$ for some $\Lambda>0$. Set $\eta(t)=(\mathcal{Z}_\alpha(t))^{\alpha-1} $, $A=(u_{ij}+u\bar{g}_{ij})$, and $\mathcal{L} =\alpha \frac{\dot{\sigma}_n^{ij}(A)\bar{\nabla}_i\bar{\nabla}_j}{\eta\sigma_n^{\alpha+1}(A)}$ with $\sigma_n(A)$ being the $n$-th elementary symmetric function of $A$ with $\sigma_n(A)=K^{-1}$. Here recall the notations from \cite{Guan-N} with $A$ being the inverse of the second fundamental form and $\bar{\nabla}$ being the covariant derivative of $\mathbb{S}^n$ and $\bar{g}$ being the round metric.  Since $u$ satisfies (\ref{eq-gcf-s-nor}),
\[K^{\alpha}=\eta (u-u_t).\] Using this equation,  direct computations yield
\begin{equation}\label{eq-alpha-kalpha-ev} \left(\frac{\partial}{\partial t}-\mathcal{L}\right)K^{\alpha}=-n\alpha K^{\alpha} +\frac{\alpha \sigma_{n-1}(A)K^{2\alpha}}{\eta \sigma_n(A)},\end{equation}
\begin{equation}\label{eq-alpha-u-ev} \left(\frac{\partial}{\partial t}-\mathcal{L}\right) u =-\frac{1+n\alpha}{\eta} K^{\alpha} +u+\frac{\alpha u\sigma_{n-1}(A)K^{\alpha}}{\eta \sigma_n(A)}.\end{equation}
At the mean time observe  the formula that for any $l\ge 0$
\begin{eqnarray*}
\left(\frac{\partial}{\partial t}-\mathcal{L}\right)\log(fg^l)&=& \frac{1}{f} \left(\frac{\partial}{\partial t}-\mathcal{L}\right)f +\frac{l}{g}\left(\frac{\partial}{\partial t}-\mathcal{L}\right)g+\alpha \frac{\dot{\sigma}_n^{ij}(A)}{\sigma_n^{\alpha+1}(A)}\frac{\bar{\nabla}_i f\bar{\nabla}_j f}{f^2}\\
&\quad&+l\, \alpha   \frac{\dot{\sigma}_n^{ij}(A)}{\sigma_n^{\alpha+1}(A)}\frac{\bar{\nabla}_i g\bar{\nabla}_j g}{g^2}\\
&\ge& \frac{1}{f} \left(\frac{\partial}{\partial t}-\mathcal{L}\right)f +\frac{l}{g}\left(\frac{\partial}{\partial t}-\mathcal{L}\right)g.
\end{eqnarray*}
Combining with (\ref{eq-alpha-kalpha-ev}) and (\ref{eq-alpha-u-ev}) we  have the estimate
\begin{equation}\label{eq-lower-key-curv}
\left(\frac{\partial}{\partial t}-\mathcal{L}\right)\log(K^\alpha u^l)\ge (l-n\alpha)-\frac{l (1+n\alpha)}{\eta(t)}\frac{K^\alpha}{u}.
\end{equation}
Note that for $\alpha\ge 1$, $\eta(t)\ge 1$ and for $\alpha<1$, $\eta(t)\ge \Lambda^{\alpha-1}$.
Now let $m(t)=\min_{\mathbb{S}^n}  K^\alpha(\cdot, t) u^l(\cdot, t)$ for $l=n\alpha +1$. Clearly by Corollary \ref{C0-sum} for estimating $K^\alpha$ from the below it suffices to obtain a lower estimate on $m(t)$.
 Set $\lambda=\frac{100}{(n\alpha+1)^2 \Lambda^{n\alpha+3-\alpha}}$. By enlarging $\Lambda$ we may assume that $m(0)\le \frac{\lambda}{2}$. We prove below by contradiction that $m(t)\ge \lambda$ for all $t\ge 0$. Assume the contrary, and let $t_0$ be the first time when $m(t_0)$ touches $\lambda$.
  If $x(t_0)$ is where the minimum $m(t_0)$ is attained for $K^\alpha(\cdot, t) u^{1+n\alpha}(\cdot, t)$ then (\ref{eq-lower-key-curv}) implies at $t=t_0$ with $l=n\alpha +1$
\begin{eqnarray*}
0&\ge &\frac{d}{dt} \log m(t)\ge 1-(1+n\alpha)^2 \frac{K^\alpha(x(t), t)}{\Lambda^{\alpha-1}u(x(t), t)}\\
&=& 1-\frac{(1+n\alpha)^2}{\Lambda^{\alpha-1}} \frac{K^\alpha(x(t), t)u^{l}(x(t), t)}{u^{l+1}(x(t), t)}\\
&\ge& 1-(1+n\alpha)^2 \Lambda^{l+2-\alpha} m(t).
\end{eqnarray*}
Here we have used that $u\ge \frac{1}{\Lambda}$. The above implies that $m(t_0)\le \frac{\lambda}{100}$. But this is a contradiction since $m(t_0)=\lambda$.

 Once we have the two sided estimates of $K^\alpha$, the proof of Theorem 10 in \cite{Andrews-pjm} gives the following estimate on the second fundamental forms of $M_t$.

\begin{theorem}\label{c2-sharp}
 Suppose $u(x, t)>0$ is the solution of (\ref{gcf-alpha-nor}) with initial data $u(x, 0)=u_0(x)$, where $u_0(x)>0$ is the support function of $\Omega_0$ with $|\Omega_0|=|B(1)|$.
There exists  a constant $C>0$, depending on $n, \Omega_0$ such that
\begin{equation}\label{upperc2}
\operatorname{trace}\left(\bar{\nabla}_i \bar{\nabla}_j u+u\delta_{ij}\right) \le C.
\end{equation}
Moreover the symmetric tensor $A$ has the lower estimate:
\begin{equation}\label{positivity}
\bar{\nabla}_i \bar{\nabla}_j u+u\bar{g}_{ij}\ge \frac{1}{C} \bar{g}_{ij}.
\end{equation}
\end{theorem}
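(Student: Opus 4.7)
The plan is to combine the maximum-principle argument of Andrews \cite{Andrews-pjm}*{Theorem 10} with the newly-established two-sided bounds on $u$ and on $K^\alpha$. I will first prove the upper bound (\ref{upperc2}) by applying the maximum principle to a test function built from the largest eigenvalue of $A$, and then deduce the lower bound (\ref{positivity}) as an immediate algebraic consequence of (\ref{upperc2}) together with Theorem \ref{C2-u}.

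For (\ref{upperc2}), since $\operatorname{trace}(A)\le n\lambda_{\max}(A)$ and $A$ is positive definite throughout the flow (strict convexity being preserved in view of the positive lower bound $K^\alpha\ge\epsilon_2$ from Theorem \ref{thm-lower-k}), it suffices to bound $\lambda_{\max}(A)$. I would consider the auxiliary function
$$
W(x,t)=\log\lambda_{\max}(A(x,t))-\beta u(x,t)
$$
for a large constant $\beta>0$ to be chosen, and apply the operator $\partial_t-\mathcal{L}$ with $\mathcal{L}=\frac{\alpha}{\eta\sigma_n^{\alpha+1}(A)}\dot\sigma_n^{ij}(A)\bar\nabla_i\bar\nabla_j$. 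At a spatial maximum one may rotate coordinates so that $A$ is diagonal and $\lambda_{\max}=A_{11}$; using the commutator identities on $\bfS^n$, the evolution of $\log A_{11}$ produces a concave gradient term together with lower-order contributions that can be controlled via Codazzi-type identities. Combined with the already-computed expression (\ref{eq-alpha-u-ev}) for $(\partial_t-\mathcal{L})u$, which contributes the strictly negative leading-order term $-\frac{1+n\alpha}{\eta\sigma_n^\alpha(A)}$, one shows that for $\beta$ sufficiently large (depending on the uniform bounds $\Lambda^{-1}\le u\le\Lambda$ from Corollary \ref{C0-sum} and $\epsilon_2\le K^\alpha\le C$ from Theorems \ref{C2-u}--\ref{thm-lower-k}), the negative contributions dominate wherever $\lambda_{\max}$ is large. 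The maximum principle then gives $W\le C$, hence an a priori upper bound on $\lambda_{\max}(A)$ as claimed.

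The lower bound (\ref{positivity}) follows by a purely algebraic argument. Writing $\lambda_1,\dots,\lambda_n$ for the eigenvalues of $A$, we have $\prod_i\lambda_i=\sigma_n(A)=K^{-1}$. By Theorem \ref{C2-u} we have $K\le C$, so $\prod_i\lambda_i\ge 1/C>0$; meanwhile (\ref{upperc2}) gives $\lambda_i\le C'$ for every $i$. Hence for each $j$,
$$
\lambda_j=\frac{\prod_i\lambda_i}{\prod_{i\ne j}\lambda_i}\ge\frac{1/C}{(C')^{n-1}},
$$
which establishes (\ref{positivity}) with a constant depending only on $n$, $\alpha$, and $\Omega_0$.

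The main obstacle is the maximum-principle argument for (\ref{upperc2}). The operator $\mathcal{L}$ is only uniformly elliptic once positive two-sided bounds on $\sigma_n(A)=K^{-1}$ are in hand, and the weight $\beta u$ in $W$ must be calibrated so that the good contribution coming from $-\beta(\partial_t-\mathcal{L})u$ genuinely absorbs the first-order terms produced by differentiating $\log\lambda_{\max}(A)$, as well as the extra non-homogeneous term $+u$ introduced by normalizing the flow. This is precisely the computation carried out in \cite{Andrews-pjm}*{Theorem 10}, and once the $C^0$-estimate of Corollary \ref{C0-sum} and the two-sided bounds on $K^\alpha$ of Theorems \ref{C2-u}--\ref{thm-lower-k} are in place, that argument applies with only cosmetic changes.
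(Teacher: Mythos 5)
Your proposal is correct and takes essentially the same route as the paper, which proves the theorem by invoking \cite{Andrews-pjm}*{Theorem 10} once the two-sided bounds on $u$ (Corollary \ref{C0-sum}) and on $K^\alpha$ (Theorems \ref{C2-u} and \ref{thm-lower-k}) are in place; in particular your derivation of \eqref{positivity} from \eqref{upperc2} together with $\prod_i\lambda_i=\det A=K^{-1}\ge C^{-1}$ is exactly the intended argument. One sign caveat in your sketch of the maximum principle: in $(\partial_t-\mathcal{L})W$ the term $-\frac{(1+n\alpha)K^\alpha}{\eta}$ of \eqref{eq-alpha-u-ev} enters multiplied by $-\beta$ and is therefore a bounded \emph{positive} (harmful) contribution, not the decisive negative one; the terms that actually close the argument are $-\alpha\eta^{-1}K^\alpha\operatorname{tr}(A^{-1})$ from the zeroth-order part of the evolution of $\log\lambda_{\max}(A)$, together with $-\beta\alpha\eta^{-1}uK^\alpha\operatorname{tr}(A^{-1})$ from \eqref{eq-alpha-u-ev}, both of which tend to $-\infty$ as $\lambda_{\max}\to\infty$ because the upper bound on $\det A=K^{-1}$ (from the lower bound on $K$) forces $\lambda_{\min}\to0$ and hence $\operatorname{tr}(A^{-1})\to\infty$. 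With that adjustment your computation closes exactly as in \cite{Andrews-pjm}.
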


Combining Proposition \ref{rho-e}, Corollary \ref{non-collapsing}, Theorem \ref{mono1}, Theorem  \ref{C0}, Theorem \ref{C2-u} and Theorem \ref{c2-sharp},  we conclude that there exists a positive constant $C$ depending only on the initial data such that for the unique positive solution to (\ref{eq-gcf-s})
\begin{equation}\label{C2-norm}
\|u(\cdot, t)\|_{C^2(\mathbb S^n)} \le C.
\end{equation}

\section{Convergence to solitons}

Since (\ref{eq-gcf-s}) is a concave parabolic equation, by   Krylov's theorem \cite{kry} and the standard theory on the parabolic  equations, estimates (\ref{C2-norm}) and (\ref{positivity})  imply the bounds on all derivatives (space and time) of $u(x, t)$. More precisely, for any $k\ge 3$, there exists $C_k\ge 0$, depending only on the initial value such that for $t\ge 1$
\begin{equation}\label{Ck}
\|u(\cdot, t)\|_{C^k(\mathbb S^n)}\le C_k.
\end{equation}
Now for any $T>0$ and sequence $\{t_j\}\to \infty$, consider $u_j(x, t)\doteqdot u(x, t-t_j)$. We have the following result on the sequential convergence.

\begin{proposition}\label{s-con} After passing to a subsequence, on $\bfS^n \times [-T, T]$, $\{u_j\}$ converges in the $C^\infty$-topology to a smooth function $u_\infty(x)$ which is a  self-similar solution to (\ref{eq-gcf-s}). Namely it satisfies the equation
$$
\lambda u_\infty(x)=K^\alpha_{\infty}(x)
$$
where $\lambda=\aint_{\mathbb{S}^n}K^{\alpha-1}_\infty(x)\, d\theta(x)$.
\end{proposition}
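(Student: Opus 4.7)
The plan is to combine the uniform higher-order estimates already established with the entropy monotonicity of Theorem \ref{mono1} to extract a limiting soliton via a standard dynamical systems argument.

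First I would establish subsequential $C^\infty$ convergence. The estimate (\ref{Ck}) gives $\|u(\cdot, s)\|_{C^k(\mathbb{S}^n)} \le C_k$ for every $k$ and all sufficiently large $s$, so once $t_j > T+1$ the shifted family $u_j$ is uniformly bounded in $C^k(\mathbb{S}^n \times [-T, T])$ for every $T$ and $k$. Arzelà–Ascoli together with a standard diagonal extraction produces a subsequence converging in $C^\infty_{\mathrm{loc}}(\mathbb{S}^n \times \mathbb{R})$ to a smooth limit $u_\infty$. The two-sided bounds from Corollary \ref{C0-sum}, Theorems \ref{C2-u}, \ref{thm-lower-k}, and \ref{c2-sharp} pass to the limit, so $u_\infty(\cdot, t)$ is the support function of a smooth, uniformly convex body and solves the normalized flow (\ref{eq-gcf-s-nor}) on $\mathbb{S}^n \times \mathbb{R}$.

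Second, I would identify $u_\infty$ as a soliton using the entropy. By Corollary \ref{key} and Theorem \ref{mono1}, $\mathcal{E}_\alpha(\Omega_t)$ is non-increasing and bounded below, so $\mathcal{E}_\alpha^\infty := \lim_{t \to \infty} \mathcal{E}_\alpha(\Omega_t)$ exists. Corollary \ref{non-collapsing} puts all $\Omega_{t_j + t}$ in a fixed class $\Gamma_\rho$, and Lemma \ref{e-cont} gives continuity of the entropy in Hausdorff distance; hence $\mathcal{E}_\alpha(\Omega_\infty(\cdot, t)) = \mathcal{E}_\alpha^\infty$ for every $t$. On the other hand, the integrated monotonicity formula (\ref{eq:entropy-nor}) together with the $C^\infty$ convergence lets us pass to the limit in the non-negative integrand, which must therefore vanish identically for $u_\infty$. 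By the equality case of the Hölder inequality that underlies the monotonicity, this forces $f = K_\infty^\alpha/u_\infty$ to be constant in $x$ at each $t$, i.e.\ $K_\infty^\alpha(x, t) = \lambda(t) u_\infty(x, t)$ for some $\lambda(t) > 0$.

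Finally, I would show the soliton profile is time-independent. Substituting $K_\infty^\alpha = \lambda(t) u_\infty$ into (\ref{eq-gcf-s-nor}) yields
\[
\partial_t u_\infty = \left(1 - \frac{\lambda(t)}{\aint_{\mathbb{S}^n} K_\infty^{\alpha-1}(y, t)\, d\theta(y)}\right) u_\infty,
\]
so $u_\infty(x, t) = a(t) v(x)$ for a positive function $a$ and a fixed profile $v$. Since the normalized flow preserves the volume $|\Omega_t| = |B(1)|$, $a(t)$ must be constant, giving $\partial_t u_\infty \equiv 0$ and $\lambda \equiv \aint_{\mathbb{S}^n} K_\infty^{\alpha-1}\, d\theta$. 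This is precisely the claimed self-similar equation. The most delicate point is the second step: ensuring that the non-negative integrand in (\ref{eq:entropy-nor}) passes to the limit and vanishes identically, rather than merely in an averaged sense. The uniform positive lower bounds on $u$ (Theorem \ref{C0}) and $K^\alpha$ (Theorem \ref{thm-lower-k}), combined with $C^\infty$ convergence and the interior entropy-point estimate (Lemma \ref{uniform-entropy-interior}), make this passage to the limit routine once carefully set up.
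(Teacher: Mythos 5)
Your proposal is correct and follows essentially the same route as the paper: uniform $C^k$ bounds plus Arzel\`a--Ascoli give subsequential $C^\infty$ convergence, constancy of the entropy along the limit flow (via $\mathcal{E}_\alpha(\Omega_{t+t_j})\to\mathcal{E}_\alpha^\infty$ and continuity of the entropy) forces equality in the H\"older inequality underlying Theorem \ref{mono1}, hence $K_\infty^\alpha/u_\infty$ is constant in $x$, and the volume normalization $\aint_{\mathbb{S}^n}u_\infty/K_\infty=1$ then identifies the constant and yields $\partial_t u_\infty=0$. Your final step phrases the stationarity via an ODE for $a(t)$ and volume preservation in time, whereas the paper computes $c(t)=\aint_{\mathbb{S}^n}K_\infty^{\alpha-1}$ directly at each fixed time, but this is only a cosmetic difference.
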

\begin{proof} For each $k\in\mathbb{N}$, let $u_j(x,t):=u(x,t+j)$.  Then $u_j$ is a solution of \eqref{eq-gcf-s-nor} for each $j$, and we have bounds in $C^k$ for every $k$, independent of $j$.  It follows that $u_j:\ \bfS^n\times[0,1]\to\bfR$ converges (for a subsequence of $j$) in $C^\infty$ to a limit $u^\infty$ which is again a solution of \eqref{eq-gcf-s-nor}.  Furthermore, if we denote by $\Omega^j_t=\Omega_{t+j}$ the corresponding convex body, then we have ${\mathcal E}_\alpha(\Omega^j_t) = \mathcal{E}_\alpha(\Omega_{t+j})\to\mathcal{E}_\alpha^\infty$ for every $t$, so we have that ${\mathcal E}_\alpha(\Omega^\infty_t)$ is constant.  It follows from Theorem \ref{mono1} that $u^\infty$ is a soliton:  The function $f = \frac{K^\alpha}{u}$ is constant, so that
$$
K^\alpha_\infty(x, t)=c(t)u_\infty(x, t)
$$
for some constant $c(t)$. Since $\aint_{\mathbb{S}^n}\frac{u_\infty}{K_{\infty}}=1$, we deduce that $c(t)=\aint_{\mathbb{S}^n} K^{\alpha-1}_\infty $. But now \eqref{eq-gcf-s-nor} gives that $\frac{\partial u}{\partial t}=0$, so that $u$ is a stationary solution
\end{proof}

Theorem 2 of \cite{Andrews-IMRN}, together with the previous proposition, implies the  following result.

\begin{theorem}\label{c-infty}
The flow (\ref{gcf-alpha-nor}) converges in $C^\infty$-topology to a smooth soliton $u_\infty$ ($M_\infty$) which has $K>0$ and satisfies the soliton equation:
\begin{equation} \label{eq-soliton}
\lambda  u \cdot \left(\det (u\operatorname{id} +\bar{\nabla}^2 u)\right)^{\alpha}=1.
\end{equation}
Here $\lambda=\aint_{\mathbb{S}^n} \left(\det (u\operatorname{id} +\bar{\nabla}^2 u)\right)^{1-\alpha}\, d\theta(x)$.
\end{theorem}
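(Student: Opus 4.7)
The plan is to combine the subsequential convergence of Proposition \ref{s-con} with Theorem 2 of \cite{Andrews-IMRN} to deduce full $C^\infty$-convergence as $t\to\infty$. First, the uniform higher-derivative bounds \eqref{Ck}, valid for all $t\geq 1$, imply via Arzel\`a--Ascoli that the trajectory $\{u(\cdot,t)\}_{t\geq 1}$ is precompact in $C^\infty(\bfS^n)$. Its $\omega$-limit set $\mathcal{A}_\infty$ is therefore nonempty and compact, and is connected because $t\mapsto u(\cdot,t)$ is continuous in $C^\infty$ with bounded image. By Proposition \ref{s-con}, every element $u^*\in\mathcal{A}_\infty$ is a smooth stationary solution of \eqref{eq-gcf-s-nor}, so it satisfies $\lambda u^* = K^\alpha_*$ with $\lambda=\aint_{\bfS^n}K_*^{\alpha-1}\,d\theta$.

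The crucial step---which I expect to be the main obstacle---is the upgrade from subsequential to full convergence, i.e.\ showing that $\mathcal{A}_\infty$ reduces to a single point. This is precisely the content of Theorem 2 of \cite{Andrews-IMRN}, whose hypotheses are supplied by the uniform $C^k$ estimates \eqref{Ck} together with the strict monotonicity of $\mathcal{E}_\alpha(\Omega_t)$ and the rigidity statement in Theorem \ref{mono1}, which guarantees equality in the entropy dissipation only on solitons. Consequently, $\mathcal{E}_\alpha$ is constant equal to $\mathcal{E}_\alpha^\infty$ on $\mathcal{A}_\infty$, and the cited theorem rules out distinct subsequential soliton limits, producing a single smooth limit $u_\infty$ towards which $u(\cdot,t)$ converges in $C^\infty(\bfS^n)$. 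The alternative route would be to establish a Lojasiewicz-type inequality for $\mathcal{E}_\alpha$ near solitons and appeal to a Simon-style uniqueness argument, but invoking \cite{Andrews-IMRN} bypasses that technical analysis.

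Finally, the limit $u_\infty$ inherits the bound \eqref{positivity} from Theorem \ref{c2-sharp}, so the symmetric tensor $A_\infty:=u_\infty\operatorname{id}+\bar{\nabla}^2 u_\infty$ is uniformly positive definite, which in turn yields $K_\infty=(\det A_\infty)^{-1}>0$ in the limit. The soliton identity $\lambda u_\infty=K^\alpha_\infty$ then rearranges to $\lambda\, u_\infty\,(\det A_\infty)^\alpha=1$, which is exactly \eqref{eq-soliton}, while $\lambda=\aint_{\bfS^n}K^{\alpha-1}_\infty\,d\theta=\aint_{\bfS^n}(\det A_\infty)^{1-\alpha}\,d\theta$ matches the stated expression. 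The equation $\frac{\partial u_\infty}{\partial t}=0$ from Proposition \ref{s-con} is automatic since $u_\infty$ no longer depends on $t$, confirming that the limit is a genuine stationary solution of the normalized flow.
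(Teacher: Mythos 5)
Your argument is correct and follows the same route as the paper: the paper's entire proof is that Proposition \ref{s-con} (subsequential convergence to a soliton) combined with Theorem 2 of \cite{Andrews-IMRN} (uniqueness of the limit) yields full $C^\infty$-convergence, which is exactly what you do. Your additional remarks on the $\omega$-limit set, the positivity of $A_\infty$ from \eqref{positivity}, and the rewriting of the soliton identity as \eqref{eq-soliton} merely make explicit what the paper leaves implicit.
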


\section{Convergence to spheres in the centrally symmetric case}

It remains an interesting question whether or not the round sphere (ball) is the unique compact soliton for $\alpha>\frac{1}{n+2}$. For $\alpha=\frac{1}{n+2}$, it was proved by Calabi \cite{Calabi} that the solitons are ellipsoids. Hence Theorem \ref{c-infty} recovers the main result of \cite{Andrews-jdg}. Since when $\alpha=\frac{1}{n}$, the soliton must be round sphere by \cite{Mcoy} Theorem \ref{c-infty} recovers the main theorem of \cite{Chow}.    For the case $\alpha\ge 1$ we have the following result for the centrally symmetric case, which generalizes the result of Firey \cite{Firey}.

\begin{proposition}\label{soliton} Assume $u$ is a soliton with associated body $\Omega$ (namely  $\lambda u=K^{\alpha}$ with $\lambda=\aint_{\mathbb{S}^n} K^{\alpha-1}$). Then the following holds.
\begin{enumerate}[label={(\roman*).}]
\item The origin is the entropy point of $\Omega$ and $|\Omega|=|B(1)|$;
 \item When $\alpha \ge 1$, the volume of $\Omega^*_0$ satisfies
\begin{equation}\label{lower-dual}
|\Omega^*_0|\ge |B(1)|.
\end{equation}
This implies that if the origin is the Santal\'o point of $\Omega$, then $\Omega$ is a ball.
\item More generally, if $\alpha\ge1$, for any $\alpha'\in [\frac{1}{n+2}, \frac{\alpha}{\alpha+1}]$
\begin{equation}\label{entropy-lower}
\mathcal{E}_{\alpha'}(\Omega, 0)\le 0.
\end{equation}
This implies that if the origin is also the entropy point of  $\mathcal{E}_{\alpha'}(\Omega)$ for some $\alpha'\in [\frac{1}{n+2}, \frac{\alpha}{\alpha+1}]$, then $\Omega$ must be a ball.
\end{enumerate}

In particular, if $\Omega$ is centrally symmetric  then $\Omega=B(1)$, and the flow (\ref{gcf-alpha-nor}) converges in $C^\infty$-topology to a ball if the initial body is centrally symmetric.
\end{proposition}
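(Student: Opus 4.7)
The plan is to establish (i)--(iii) in sequence and then combine them with a symmetry argument for the centrally symmetric case. For (i), I would compute directly using the soliton equation $\lambda u = K^\alpha$ together with the identity $|\Omega|=\frac{1}{n+1}\int_{\bfS^n}u/K\,d\theta$:
\[
|\Omega| = \frac{1}{(n+1)\lambda}\int_{\bfS^n}K^{\alpha-1}\,d\theta = \frac{\omega_n}{n+1} = |B(1)|
\]
by the definition of $\lambda$. To show the origin is the entropy point, I would verify the first variation condition \eqref{1stvar} at $z=0$: using $u^{-1/\alpha} = \lambda^{1/\alpha}K^{-1}$, the integral becomes $\lambda^{1/\alpha}\int_{\bfS^n}x_j/K\,d\theta$, which via the Gauss-map change of variables equals $\lambda^{1/\alpha}\int_M\nu_j\,dA$ and vanishes by the divergence theorem applied to the constant vector field $e_j$. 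The strict concavity from Lemma \ref{e-stable} then identifies this critical point as the unique entropy point.

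The core of the argument is (iii). I would use the monotonicity of Proposition \ref{E-incr}: since $\mathcal{E}_{\alpha'}(\Omega,0)$ is nondecreasing in $\alpha'$, it suffices to establish the bound at the upper endpoint $\alpha'' := \alpha/(\alpha+1)$, where $1-1/\alpha'' = -1/\alpha$, so that via the soliton equation
\[
\mathcal{E}_{\alpha''}(\Omega,0) = -\alpha\log\!\left(\aint_{\bfS^n}u^{-1/\alpha}\,d\theta\right) = -\alpha\log\!\left(\lambda^{1/\alpha}\aint_{\bfS^n}K^{-1}\,d\theta\right).
\]
The claim $\mathcal{E}_{\alpha''}(\Omega,0)\leq 0$ reduces to $\lambda\cdot\left(\aint_{\bfS^n}K^{-1}\,d\theta\right)^{\alpha}\geq 1$. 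Since $\alpha\geq 1$, the function $x\mapsto x^{1-\alpha}$ is convex on $(0,\infty)$ (second derivative $\alpha(\alpha-1)x^{-\alpha-1}\geq 0$), so Jensen's inequality applied to $f=K^{-1}$ with the probability measure $d\theta/\omega_n$ gives
\[
\lambda = \aint_{\bfS^n}K^{\alpha-1}\,d\theta = \aint_{\bfS^n}(K^{-1})^{1-\alpha}\,d\theta \geq \left(\aint_{\bfS^n}K^{-1}\,d\theta\right)^{1-\alpha}.
\]
Multiplying by $\left(\aint K^{-1}\right)^{\alpha}$ and using $\aint K^{-1}\,d\theta = |M|/\omega_n\geq 1$ (the isoperimetric inequality, since $|\Omega|=|B(1)|$ by (i)) produces the required bound. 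Statement (ii) is immediate upon taking $\alpha'=1/(n+2)$, which lies in the allowed interval since $\alpha/(\alpha+1)\geq 1/2\geq 1/(n+2)$ whenever $\alpha\geq 1$.

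For the centrally symmetric conclusion, if $\Omega = -\Omega$ then $u_z(x) = u_{-z}(-x)$, and the change of variables $x\mapsto -x$ in the defining integral yields $\mathcal{E}_{\alpha'}(\Omega,z) = \mathcal{E}_{\alpha'}(\Omega,-z)$; combined with strict concavity (Lemma \ref{e-stable}) this forces the entropy point to be $0$ for every $\alpha'$. Since the limit soliton from Theorem \ref{c-infty} inherits central symmetry (the normalized flow commutes with $X\mapsto -X$ and the shrinking point of a centrally symmetric body must be its centre of symmetry), fixing any $\alpha'\in(\frac{1}{n+2},\frac{\alpha}{\alpha+1}]$ gives $\mathcal{E}_{\alpha'}(\Omega) = \mathcal{E}_{\alpha'}(\Omega,0)\leq 0$ by (iii), while Corollary \ref{key} gives $\mathcal{E}_{\alpha'}(\Omega)\geq 0$ with equality if and only if $\Omega$ is a ball. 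Hence $\Omega = B(1)$, and Theorem \ref{c-infty} then yields convergence of the normalized flow to $B(1)$. The main delicate point I anticipate is the Jensen step in (iii): the hypothesis $\alpha\geq 1$ is precisely what makes $x^{1-\alpha}$ convex, explaining why this restriction enters the statement, and the reduction via Proposition \ref{E-incr} to a single endpoint is essential to avoid tracking a family of inequalities with varying signs.
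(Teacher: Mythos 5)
Your proof is correct, and its essential ingredients coincide with the paper's: the soliton equation converts powers of $u$ into powers of $K$, Jensen's inequality handles the resulting exponents, and the isoperimetric inequality $\aint_{\bfS^n}K^{-1}\,d\theta\ge 1$ closes the argument. The organization differs in a few minor but genuine ways. The paper proves (ii) directly via the chain $\aint_{\bfS^n} u^{-(n+1)}\,d\theta=\lambda^{n+1}\aint_{\bfS^n} K^{-\alpha(n+1)}\,d\theta\ge\bigl(\aint_{\bfS^n} K^{-1}\,d\theta\bigr)^{\alpha(n+1)}\ge 1$, first establishing $\lambda\ge1$ from Corollary \ref{key} (hence ultimately from Blaschke--Santal\'o), and then declares (iii) ``similar''; you instead prove (iii) only at the endpoint $\alpha''=\alpha/(\alpha+1)$, propagate it to the whole interval by the monotonicity of Proposition \ref{E-incr}, and recover (ii) as the special case $\alpha'=\frac1{n+2}$. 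Your estimate $\lambda\ge\bigl(\aint_{\bfS^n} K^{-1}\,d\theta\bigr)^{1-\alpha}$, obtained by Jensen with respect to $d\theta$, also lets you bypass the separate step $\lambda\ge1$. For the centrally symmetric conclusion you combine (iii) with the symmetry and concavity of $z\mapsto\mathcal{E}_{\alpha'}(\Omega,z)$ to locate the entropy point at the origin, whereas the paper's natural route is through (ii) and the fact that the Santal\'o point of a centrally symmetric body is its centre; both are valid. The one point you gloss over is the equality analysis inside the ``this implies $\Omega$ is a ball'' clauses of (ii) and (iii) themselves (for $\alpha>1$ strict convexity of $x\mapsto x^{1-\alpha}$ forces $K$, hence $u$, constant; for $\alpha=1$ one uses equality in the isoperimetric inequality), but since your final application invokes the equality case of Corollary \ref{key} instead, nothing essential is missing.
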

\begin{proof}
The equation clearly gives $\aint_{\mathbb{S}^n}\frac{u}{K}=1$. On the other hand $\int_{\mathbb{S}^n}\frac{u}{K}=(n+1)|\Omega|$. This proves that $|\Omega|=|B(1)|$.
By the soliton equation it is easy to check that
$$
\int_{\mathbb{S}^n} \frac{x_j}{u^{\frac{1}{\alpha}}(x)}d\, \theta(x)=0.
$$
Hence $0$ is the entropy point with respect to $\mathcal{E}_\alpha(\Omega)$.

For $\alpha>1$, we have (in the notation of Theorem \ref{mono1})
$$
\lambda = \aint K^{\alpha-1}\,d\theta =\aint f\,d\sigma \geq \left(\aint f^{\frac1\alpha}d\sigma\right)^\alpha = \E^{(\alpha-1)\mathcal{E}_\alpha(\Omega)}\geq 1,
$$
where we used $\aint d\sigma = \frac{|\Omega|}{|B(1)|}=1$, and then we argue as follows:
\begin{eqnarray*}
\frac{|\Omega^*_0|}{|B(1)|}=\aint_{\mathbb{S}^n} \frac{1}{u^{n+1}}\, d\theta &=& \lambda^{n+1}\aint_{\mathbb{S}^n} \frac{1}{K^{\alpha(n+1)}}\, d\theta\\
&\ge & \aint_{\mathbb{S}^n} \frac{1}{K^{\alpha(n+1)}}\, d\theta\\
&\ge& \left(\aint_{\mathbb{S}^n} \frac{1}{K}\, d\theta\right)^{\alpha (n+1)}\\
&\ge& 1.
\end{eqnarray*}
At the last line above we use the isoperimetric inequality $\aint_{\mathbb{S}^n}\frac{1}{K}\, d\theta(x) \ge 1$.  By the Blaschke-Santal\'o inequality, if $0$ is the Santal\'o point  we have that $|\Omega^*|\le |B(1)|$. Hence equality holds in the above inequalities, from which it is easy to see $u=1$. The proof of part (iii) is similar.
 \end{proof}

\section{Applications and stability for the entropy}

 Using the proposition we  derive some estimates on the entropy for general convex domains  which can be viewed as stability results for Corollary \ref{key}. These results  are inspired by \cite{Ivaki}. To formulate the result we recall the concept of the curvature image $\Lambda_{\alpha} \Omega$ \cite{Lutwak}, which can be defined via the  solution to certain Monge-Amper\'e equation (precisely the Minkowski problem) and the compatibility conditions (\ref{1stvar}), which hold if the origin is the entropy point. The convex body $\Lambda_\alpha \Omega$ is characterised by having the so-called surface area measure function $f_{\Lambda_\alpha \Omega}(x)$ (for the smooth case it is the reciprocal of the Gauss curvature) given by
\begin{equation}\label{eq:curvature-image}
f_{\Lambda_\alpha\Omega}(x)=\frac{|\Omega|}{|B(1)|} \E^{-\frac{\alpha-1}{\alpha}\mathcal{E}_\alpha(\Omega)}u^{-\frac{1}{\alpha}}_e(x).
\end{equation}
The (normalised) mixed volume $V_1( \Lambda_\alpha \Omega, \Omega)\doteqdot \frac{1}{n+1}\int_{\mathbb{S}^n} u_{\Omega}(x) f_{\Lambda_\alpha \Omega}(x)\, d\theta(x)$ is then given by
$$
V_1( \Lambda_\alpha \Omega, \Omega)=|\Omega|.
$$
which then implies (by the Alexandrov-Fenchel inequality) that
\begin{equation}\label{key2}
\frac{|\Omega|}{|\Lambda_\alpha \Omega|} \ge 1 \quad\mbox{ and }\quad \frac{V_1(\Omega, \Lambda_\alpha \Omega)}{|\Lambda_\alpha \Omega|} \ge 1.
\end{equation}

We first derive the estimates as a corollary of Proposition \ref{soliton}.

\begin{corollary}\label{g-BS} Let $\Omega$ be a smooth strictly convex body.  Suppose that either $\alpha=\frac1n$, or $\alpha\geq 1$ and $\Omega$ is centrally symmetric.  Then
\begin{equation}\label{eq:66}
\mathcal{E}_{\alpha}(\Omega)\ge \frac1{n+1}\log\left(\frac{|\Omega|}{|B(1)|}\right)+\frac{n}{n+1}\log\left(\frac{|\Omega|}{|\Lambda_{\alpha}\Omega|} \right).
\end{equation}
    The equality holds if and only if $\Omega$ is a round ball.
\end{corollary}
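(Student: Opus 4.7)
\emph{Proof plan.} The plan is to combine the two entropy comparisons already established in the paper—Lemma~\ref{comparison-entropies} and Corollary~\ref{key}—each applied to the curvature-image body $\Lambda_\alpha\Omega$, tied together by a short computational identity linking $Z_\alpha(\Lambda_\alpha\Omega)$ to the entropy of $\Omega$ itself.

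The first step is to verify the identity
\begin{equation*}
Z_\alpha(\Lambda_\alpha\Omega) \;=\; \frac{|B(1)|}{|\Omega|}\,\E^{\mathcal{E}_\alpha(\Omega)}.
\end{equation*}
Placing the origin at the entropy point of $\Omega$ so that $u_{z_0}=u_e$, and using $K_{\Lambda_\alpha\Omega}=1/f_{\Lambda_\alpha\Omega}$ with $f_{\Lambda_\alpha\Omega}$ given by \eqref{eq:curvature-image}, a direct computation shows
\[
\aint_{\bfS^n}K_{\Lambda_\alpha\Omega}^{\alpha-1}\,d\theta
=\bigl(|\Omega|/|B(1)|\bigr)^{1-\alpha}\,\E^{(\alpha-1)\mathcal{E}_\alpha(\Omega)}
\]
once one uses $\aint u_e^{1-1/\alpha}\,d\theta=\E^{\frac{\alpha-1}{\alpha}\mathcal{E}_\alpha(\Omega)}$, and taking $(\alpha-1)$-st roots produces the identity. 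The $\alpha=1$ case then follows by the continuity established in Proposition~\ref{E-incr}.

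Next I would rescale Lemma~\ref{comparison-entropies} to arbitrary volume, obtaining $Z_\alpha(L)\geq (|B(1)|/|L|)\E^{\mathcal{E}_\alpha(L)}$ for every smooth strictly convex body $L$ (apply the Lemma to $\mu L$ with $\mu^{n+1}=|B(1)|/|L|$, using $Z_\alpha(\mu L)=\mu^{-n}Z_\alpha(L)$ and $\mathcal{E}_\alpha(\mu L)=\mathcal{E}_\alpha(L)+\log\mu$). Taking $L=\Lambda_\alpha\Omega$ and combining with the identity yields
\begin{equation*}
\mathcal{E}_\alpha(\Omega)-\mathcal{E}_\alpha(\Lambda_\alpha\Omega)\;\geq\;\log\frac{|\Omega|}{|\Lambda_\alpha\Omega|}.
\end{equation*}
An analogous rescaling of Corollary~\ref{key} applied to $\Lambda_\alpha\Omega$ gives $\mathcal{E}_\alpha(\Lambda_\alpha\Omega)\geq \tfrac{1}{n+1}\log(|\Lambda_\alpha\Omega|/|B(1)|)$. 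Adding the two inequalities and collecting terms produces exactly
$\mathcal{E}_\alpha(\Omega)\geq \tfrac{1}{n+1}\log(|\Omega|/|B(1)|)+\tfrac{n}{n+1}\log(|\Omega|/|\Lambda_\alpha\Omega|)$.

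For the equality statement, equality in the first bound forces $\Lambda_\alpha\Omega$ to be a soliton (the equality clause of Lemma~\ref{comparison-entropies}), and equality in the second forces $\Lambda_\alpha\Omega$ to be a ball; since $f_{\Lambda_\alpha\Omega}\propto u_e^{-1/\alpha}$, a constant curvature image density means $u_e$ is constant, hence $\Omega$ itself is a ball. In the restrictive regime $\alpha=\tfrac{1}{n}$, or $\alpha\geq 1$ with $\Omega$ centrally symmetric, Proposition~\ref{soliton} (together with the classification in \cite{Mcoy} for $\alpha=\tfrac{1}{n}$) further ensures that every compact soliton is a ball, so the two equality conditions automatically coincide and give the stated rigidity. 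The only genuinely non-trivial step is the verification of the identity in the second paragraph; once that is in hand, the two entropy inequalities slot together cleanly.
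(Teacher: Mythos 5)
Your argument is correct, and it is a genuinely different route from the paper's. The paper proves Corollary \ref{g-BS} dynamically: it runs the expanding flow \eqref{inverse-gcf}, shows that $\mathcal{Q}(t)-\mathcal{J}_3(t)$ is non-increasing, and then needs convergence of the rescaled flow to a soliton together with the classification of solitons as balls --- which is precisely where the hypotheses $\alpha=\frac1n$, or $\alpha\ge1$ with central symmetry, enter. Your proof is entirely static: the identity $\mathcal{Z}_\alpha(\Lambda_\alpha\Omega)=\frac{|B(1)|}{|\Omega|}\E^{\mathcal{E}_\alpha(\Omega)}$ checks out (the exponent collapses via $\frac{(\alpha-1)^2+(\alpha-1)}{\alpha}=\alpha-1$), the scaling laws $\mathcal{Z}_\alpha(\mu L)=\mu^{-n}\mathcal{Z}_\alpha(L)$ and $\mathcal{E}_\alpha(\mu L)=\mathcal{E}_\alpha(L)+\log\mu$ are right, and adding the two rescaled inequalities does reproduce \eqref{eq:66} exactly. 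Notably, your argument never uses the restrictive hypotheses for the inequality itself: it establishes \eqref{eq:66} for every smooth strictly convex body and every $\alpha>\frac1{n+2}$, which is the content of the paper's Theorem \ref{entropy-p}; your proof is therefore closer in spirit to Section 8, trading the affine isoperimetric inequality there for the combination of Blaschke--Santal\'o (via Corollary \ref{key}) and Jensen (via Lemma \ref{comparison-entropies}), both applied to the curvature image. The equality discussion is also fine --- equality in the rescaled Corollary \ref{key} alone forces $\Lambda_\alpha\Omega$ to be a ball, hence $f_{\Lambda_\alpha\Omega}$ constant, hence $u_e$ constant, hence $\Omega$ a ball --- so you do not actually need the soliton clause of Lemma \ref{comparison-entropies}. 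Two small points worth making explicit: (i) applying Lemma \ref{comparison-entropies} to $\Lambda_\alpha\Omega$ presupposes that $\Lambda_\alpha\Omega$ is itself a smooth strictly convex body, which follows from solvability and regularity of the Minkowski problem for the smooth positive data \eqref{eq:curvature-image}, the compatibility condition being exactly \eqref{1stvar} at the entropy point; (ii) at $\alpha=1$ the identity can be verified directly with $\mathcal{Z}_1=\exp\bigl(\aint_{\bfS^n}\log K\,d\theta\bigr)$ rather than by a continuity argument.
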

\begin{proof} The proof is essentially from \cite{Ivaki}. The key is the observation that the entropy point of $\mathcal{E}_\alpha(\Omega)$ is invariant under the flow
\begin{equation}\label{inverse-gcf}
\frac{\partial}{\partial t} X(x,t) =\frac{\langle X(x, t), \nu(x, t)\rangle^{1+\frac{1}{\alpha}}}{K(x, t)} \nu(x, t)
\end{equation}
which in terms of the support function can be written as
\begin{equation}\label{inverse-gcf-support}
\frac{\partial}{\partial t} u(x, t)=\frac{u^{1+\frac{1}{\alpha}}(x, t)}{K(x, t)}.
\end{equation}
Hence we shall assume that the origin is the entropy point of the initial convex domain $\Omega$.
Let $\Omega_t$ be the evolving convex domain. Here $u(x, t)$ and $K(x, t)$ denote the support function and the Gauss curvature of $\partial \Omega_t$ (with respect to the origin). We denote by $u_{\Lambda_\alpha \Omega}$ and $K_{\Lambda_\alpha \Omega}$ the support function and the Gauss curvature of $\partial (\Lambda_\alpha \Omega)$.
The evolution equation of the following three quantities $\mathcal{J}_i$ ($1\le i\le 3$) along the flow (\ref{inverse-gcf-support}) holds the key to the proof. A straight forward computation yields
\begin{equation}\label{eq-igcf-1}
\frac{d}{dt} \mathcal{J}_1(t)=(n+1)\frac{\int_{\mathbb{S}^n} f^{-\frac1\alpha}\, d\sigma}{\int_{\mathbb{S}^n} \, d\sigma}, \mbox{  with  }\mathcal{J}_1(t)=\log \left(\frac{|\Omega_t|}{|B(1)|}\right),
\end{equation}
where as before we write $f=\frac{K^\alpha}{u}$ and $d\sigma = \frac{u}{K}d\theta$.  We also have
\begin{equation}\label{eq-igcf-2}
\frac{d}{dt} \mathcal{J}_2(t)=\frac{\int_{\mathbb{S}^n}\, d\sigma}{\int_{\bfS^n}f^{\frac1\alpha}\,d\sigma}, \mbox{  with  }\mathcal{J}_2(t)=\mathcal{E}_\alpha(\Omega_t).
\end{equation}

Combining the above we have that the scaling invariant quantity
$\mathcal{Q}(t)=\mathcal{E}_\alpha(\Omega_t)-\frac{1}{n+1}\log \left(\frac{|\Omega_t|}{|B(1)|}\right)$, which by Corollary \ref{key} is always bounded below by $0$,  satisfies the equation:
\begin{eqnarray}\label{key-igcf-3}
\frac{d}{dt}\mathcal{Q}(t)&=& \mathcal{J}_2'-\frac{1}{n+1}\mathcal{J}_1' \nonumber \\
&=&\left(\frac{\int_{\mathbb{S}^n}\, d\sigma}{\int_{\bfS^n}f^{\frac1\alpha}\,d\sigma} -\frac{\int_{\mathbb{S}^n} f^{-\frac1\alpha}\, d\sigma)}{\int_{\mathbb{S}^n} \, d\sigma}\right)\le 0.
\end{eqnarray}
This together with Proposition \ref{rho-e} controls the support function from above and below under the evolution, and the curvature can be estimated above and below following the methods of \cite{Ivaki} (or by the method of Section 5).  It follows that the solution exists for finite time and expands to infinity under \eqref{inverse-gcf}, and that after rescaling to fixed volume the solutions converge smoothly (for a subsequence of times) to a soliton.

To obtain the result of the theorem we derive the evolution equation of $\log |\Lambda_\alpha \Omega_t|$. The following computation has been carried in \cite{Ivaki}. Note that $K_{\Lambda_\alpha \Omega_t}^{-1}(x,t) = f_{\Lambda_\alpha\Omega}(x)$ is given by \eqref{eq:curvature-image}, and
$$
|\Lambda_\alpha \Omega_t|=\frac{1}{n+1}\int_{\mathbb{S}^n} \frac{u_{\Lambda_\alpha \Omega}(x, t)}{K_{\Lambda_\alpha \Omega_t}(x, t)}\, d\theta(x), \quad \frac{d}{dt}|\Lambda_\alpha \Omega_t|=\frac{1}{n} \int_{\mathbb{S}^n}u_{\Lambda_\alpha \Omega}(x, t) \frac{\partial}{\partial t} K_{\Lambda_\alpha \Omega_t}^{-1}(x,t).
$$
Hence we have
\begin{equation}\label{eq:igcf31}
\frac{d}{dt}\log |\Lambda_\alpha \Omega_t|=\frac{n+1}{n} \mathcal{J}_1'-(n+1)\frac{1-\frac{1}{\alpha}}{n}\mathcal{J}_2' -\frac{n+1}{n} \frac{1}{\alpha} \frac{V_1(\Omega_t, \Lambda_\alpha \Omega_t)}{|\Lambda_\alpha \Omega_t|} \mathcal{J}_2'.
\end{equation}
Next we consider the scaling invariant quantity $\mathcal{J}_3(t)=\frac{n}{n+1}\log \left(\frac{|\Omega_t|}{|\Lambda_\alpha \Omega_t|}\right)$ and its evolution equation:
\begin{eqnarray}
\frac{d}{dt} \mathcal{J}_3 &=& -\frac{1}{n+1}\mathcal{J}_1'+(1-\frac{1}{\alpha})\mathcal{J}_2'+\frac{1}{\alpha} \frac{V_1(\Omega_t, \Lambda_\alpha \Omega_t)}{|\Lambda_\alpha \Omega_t|} \mathcal{J}_2'\\
&\ge& -\frac{1}{n+1}\mathcal{J}_1'+\mathcal{J}_2'=\frac{d}{dt}\mathcal{Q}. \nonumber
\end{eqnarray}
Thus $\mathcal{Q}-\mathcal{J}_3$ is non-increasing, and
the claimed estimate follows from the above and the classification of solitons provided by Firey \cite{Firey} for the case $\alpha=1$, by Chow \cite{Chow} for the case $\alpha=\frac{1}{n}$ and Proposition \ref{soliton} for the case $\alpha\ge 1$:  These imply that the limiting soliton is a ball, in which case $\mathcal{Q}=\mathcal{J}_3=0$, so necessarily $\mathcal{Q}\geq \mathcal{J}_3$ initially. \end{proof}

The central symmetry assumption and the condition $\alpha\geq 1$ apppear in the above proof only in the classification of solitons, and so the inequality holds whenever it can be established that solitons are spheres.  In particular, our generalised conjecture would imply the inequality for all $\alpha>\frac{1}{n+2}$ without any central symmetry assumption.  Indeed, in the next section we provide a different argument which establishes this inequality without using the flow.

\section{Entropy stability via isoperimetric inequalities}

Now we present a result which contains Corollary \ref{g-BS} as a special case without assuming the central symmetry.
To present this more general result we extend the definition of the entropy to $\alpha<0$ by by adopting the definition \eqref{eq:def-e-z} without change, and modifying the definition \eqref{eq:def-e} by taking an infimum rather than a supremum for $\alpha<0$.  As before there is a unique entropy point in the interior of the domain in the case $\alpha<0$ (the proof of Lemma \ref{e-p} applies without change).  The result of Corollary \ref{key} (which used the Blaschke-Santal\'o inequality) gives that $\mathcal{E}_\alpha(\Omega)\geq 0$ whenever $|\Omega|=|B(1)|$, and this result can easily be extended to $\alpha<0$ using an isoperimetric inequality:  We have by the H\"older inequality for $\alpha<0$ that
$$
\E^{\mathcal{E}_\alpha(\Omega,z)} = \left(\aint_{\bfS^n}u^{1-\frac1\alpha}\,d\theta\right)^{\frac{1}{1-\frac1\alpha}}
\geq \aint_{\bfS^n} u\,d\theta = \frac{1}{|B(1)|}V_1(B, \Omega)\geq \left(\frac{|\Omega|}{|B(1)|}\right)^{\frac1{n+1}}=1,
$$
where the last inequality is the Minkowski inequality relating mean width and volume (see \cite{Sch}*{Theorem 7.2.1}), for which equality holds if and only if $\Omega$ is a ball.

We recall the affine isoperimetric inequality: For any convex body $\Omega'$, if $f_{\Omega'}$ is its surface area measure, one may define the affine surface area by
$$
\mathcal{A}(\Omega')\doteqdot \int_{\mathbb{S}^n} f_{\Omega'}^{\frac{n+1}{n+2}}(x)\, d\theta(x).
$$
The affine isoperimetric inequality relates this to the volume (see \cites{Lutwak,Sch}):

\begin{theorem}\label{affine-iso} For any convex body $\Omega$,
\begin{equation}\label{lut1}
\mathcal{A}(\Omega)^{n+2}\le (n+1)^{n+2} |B(1)|^2 |\Omega|^{n}.
\end{equation}
\end{theorem}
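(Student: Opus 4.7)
My plan is to derive \eqref{lut1} from a single application of H\"older's inequality combined with the Blaschke-Santal\'o inequality \eqref{BS}. This is the natural integral-geometric strengthening of Corollary \ref{key} at $\alpha=\frac{1}{n+2}$, and it fits cleanly into the support function framework of \S 2. I first reduce to the case when $\Omega$ is smooth and strictly convex (so that $f_\Omega = K^{-1}$ on $\mathbb{S}^n$); the general case follows by approximating $\Omega$ by smooth strictly convex bodies in Hausdorff distance, under which both sides of \eqref{lut1} are continuous.

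Next, let $z_s$ denote the Santal\'o point of $\Omega$ and write $u = u_{z_s}$ for the support function with respect to $z_s$. Two identities already used in \S 2 are central:
\begin{equation*}
\int_{\mathbb{S}^n}\frac{u(x)}{K(x)}\,d\theta(x)=(n+1)|\Omega|, \qquad \int_{\mathbb{S}^n} u(x)^{-(n+1)}\,d\theta(x)=(n+1)|\Omega^*_{z_s}|.
\end{equation*}
I factor the integrand as $K^{-(n+1)/(n+2)} = (u/K)^{(n+1)/(n+2)} \cdot u^{-(n+1)/(n+2)}$ and apply H\"older's inequality with the conjugate exponents $p=\tfrac{n+2}{n+1}$ and $q=n+2$, which yields
\begin{equation*}
\mathcal{A}(\Omega)\le \Big(\int_{\mathbb{S}^n}\tfrac{u}{K}\,d\theta\Big)^{\!\frac{n+1}{n+2}}\Big(\int_{\mathbb{S}^n}u^{-(n+1)}\,d\theta\Big)^{\!\frac{1}{n+2}} = (n+1)\,|\Omega|^{\frac{n+1}{n+2}}\,|\Omega^*_{z_s}|^{\frac{1}{n+2}}.
\end{equation*}

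Raising both sides to the $(n+2)$-th power and invoking the Blaschke-Santal\'o bound $|\Omega|\cdot|\Omega^*_{z_s}|\le|B(1)|^2$ from \eqref{BS} then gives
\begin{equation*}
\mathcal{A}(\Omega)^{n+2}\le (n+1)^{n+2}\,|\Omega|^{n+1}\,|\Omega^*_{z_s}|\le(n+1)^{n+2}\,|\Omega|^n\,|B(1)|^2,
\end{equation*}
which is precisely \eqref{lut1}. The only genuinely subtle point in executing the plan is the choice of H\"older exponents: the pair $(\tfrac{n+2}{n+1},n+2)$ is essentially forced by the requirement that the $u$-exponents in the two factors sum to zero while the $K$-exponent matches $-\tfrac{n+1}{n+2}$, so that the two resulting integrals are precisely the volume and dual-volume quantities controlled by Blaschke-Santal\'o. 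The equality case identifies balls as the unique extremisers via the equality case of H\"older (forcing $u$ to be constant on $\mathbb{S}^n$) combined with that of Blaschke-Santal\'o (forcing $\Omega$ to be an ellipsoid symmetric about $z_s$).
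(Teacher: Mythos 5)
The paper offers no proof of Theorem \ref{affine-iso} at all --- it simply cites Lutwak and Schneider --- so your argument cannot be compared with an ``official'' one; but your derivation of the inequality itself is correct and is in fact the standard way of deducing the affine isoperimetric inequality from Blaschke--Santal\'o, and it fits naturally into the paper's framework since \eqref{BS} and the identities $\int_{\mathbb{S}^n}u/K\,d\theta=(n+1)|\Omega|$ and $\int_{\mathbb{S}^n}u^{-(n+1)}\,d\theta=(n+1)|\Omega^*_{z_s}|$ are all already on record in \S 2. The exponent bookkeeping in the H\"older step checks out, and the conclusion follows exactly as you write it.

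Two caveats. First, your equality discussion is wrong: equality in your H\"older step does \emph{not} force $u$ to be constant; it forces $(u/K)$ to be proportional to $u^{-(n+1)}$, i.e.\ $K=c\,u^{n+2}$, which is precisely the soliton equation \eqref{eq-soliton} for $\alpha=\frac{1}{n+2}$ and by Calabi's theorem \cite{Calabi} characterises ellipsoids centred at $z_s$. This is as it must be: both sides of \eqref{lut1} transform identically under volume-preserving affine maps, so \emph{every} ellipsoid is an extremiser --- consistent with the paper's own remark in Theorem \ref{entropy-p} that equality at $\alpha=\frac{1}{n+2}$ holds for ellipsoids. Since the theorem as stated asserts only the inequality, this does not invalidate your proof, but the claim that balls are the unique extremisers should be deleted. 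Second, the reduction to smooth strictly convex bodies by ``continuity of both sides'' is not quite right: affine surface area is only upper semicontinuous in the Hausdorff metric, which is the wrong direction for passing the inequality to the limit. It is cleaner to run the H\"older argument directly for a general body, taking $f_\Omega$ to be the density of the absolutely continuous part of the surface area measure, since then $\int_{\mathbb{S}^n}u f_\Omega\,d\theta\le\int_{\mathbb{S}^n}u\,dS_\Omega=(n+1)|\Omega|$ and the rest of the computation is unchanged; alternatively, note that in this paper the inequality is only ever applied to bodies such as $\Lambda_\alpha\Omega$ whose surface area measure has an explicit continuous positive density, for which your smooth computation applies verbatim.
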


We now proceed to the main result:

\begin{theorem}\label{entropy-p} For any convex body $\Omega$, for $\alpha\geq\frac{1}{n+2}$,
\begin{equation}\label{eq:72}
\mathcal{E}_{\alpha}(\Omega)\ge \frac1{n+1}\log\left(\frac{|\Omega|}{|B(1)|}\right)+\frac{n}{n+1}\log\left(\frac{|\Omega|}{|\Lambda_{\alpha}\Omega|} \right).
\end{equation}
For $\alpha<0$,
\begin{equation}\label{eq:reverse}
\mathcal{E}_{\alpha}(\Omega)\le \frac1{n+1}\log\left(\frac{|\Omega|}{|B(1)|}\right)+\frac{n}{n+1}\log\left(\frac{|\Omega|}{|\Lambda_{\alpha}\Omega|} \right).
\end{equation}
The equality holds if and only if $\Omega$ is a round ball, unless $\alpha =\frac{1}{n+2}$, in which case equality also holds for ellipsoids.
\end{theorem}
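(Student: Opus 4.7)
The plan is to combine an isoperimetric-type inequality applied to the curvature image $L:=\Lambda_\alpha\Omega$---the affine isoperimetric inequality of Theorem~\ref{affine-iso} in the range $\alpha\geq\frac{1}{n+2}$, the classical Minkowski inequality for $\alpha<0$---with the power-mean (Jensen) inequality, tied together by an identity expressing $\mathcal{E}_\alpha(\Omega)$ as an $L^{1-\alpha}$-moment of $f_L$.  After translating so that the origin is the entropy point $z_e$ (so that $u_\Omega=u_e$), setting $c:=\frac{|\Omega|}{|B(1)|}\E^{-\frac{\alpha-1}{\alpha}\mathcal{E}_\alpha(\Omega)}$ in \eqref{eq:curvature-image} gives $f_L=cu_e^{-1/\alpha}$, hence $u_e^{1-1/\alpha}=c^{\alpha-1}f_L^{1-\alpha}$.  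Averaging over $\bfS^n$ and invoking $\aint u_e^{1-1/\alpha}\,d\theta=\E^{\frac{\alpha-1}{\alpha}\mathcal{E}_\alpha(\Omega)}$, an elementary rearrangement will yield the key identity
\begin{equation}\label{eq:plan-identity}
\mathcal{E}_\alpha(\Omega)\ =\ \log\!\left(\frac{|\Omega|}{|B(1)|}\right)+\frac{1}{\alpha-1}\log\aint_{\bfS^n}f_L^{\,1-\alpha}(x)\,d\theta(x).
\end{equation}

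Substituting \eqref{eq:plan-identity} into the target inequality \eqref{eq:72} makes the $\log|\Omega|$ terms cancel, and (writing $q:=1-\alpha$) the problem reduces to showing
\begin{equation}\label{eq:plan-reduced}
\frac{n}{n+1}\log\!\left(\frac{|L|}{|B(1)|}\right)\ \geq\ \frac{1}{q}\log\aint_{\bfS^n} f_L^{\,q}\,d\theta
\end{equation}
whenever $\alpha\geq\frac{1}{n+2}$, and the reverse inequality for $\alpha<0$.  For the first range, I would rewrite Theorem~\ref{affine-iso} applied to $L$, after dividing by the appropriate powers of $\omega_n=(n+1)|B(1)|$, as $\aint f_L^{\,p}\,d\theta\leq(|L|/|B(1)|)^{n/(n+2)}$ with $p:=\frac{n+1}{n+2}$, equivalently $\frac{n}{n+1}\log(|L|/|B(1)|)\geq\frac{1}{p}\log\aint f_L^{\,p}\,d\theta$.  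The crucial observation is that the condition $\alpha\geq\frac{1}{n+2}$ is precisely $q\leq p$, and the power-mean inequality ($r\mapsto\frac1r\log\aint f_L^{\,r}\,d\theta$ non-decreasing) then yields $\frac{1}{q}\log\aint f_L^{\,q}\leq\frac{1}{p}\log\aint f_L^{\,p}$; chaining the two inequalities produces \eqref{eq:plan-reduced}.

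For $\alpha<0$ we have $q>1>p$, so the affine isoperimetric inequality points the wrong way.  I would instead invoke the classical Minkowski mixed-volume inequality $V_1(L,B(1))^{n+1}\geq|L|^n|B(1)|$; since $u_{B(1)}\equiv 1$, we have $V_1(L,B(1))=\frac{1}{n+1}\int f_L\,d\theta$, and the inequality rearranges to $\aint f_L\,d\theta\geq(|L|/|B(1)|)^{n/(n+1)}$.  Combined with the elementary Jensen bound $\aint f_L\leq\bigl(\aint f_L^{\,q}\bigr)^{1/q}$ valid for $q>1$, this produces the reverse of \eqref{eq:plan-reduced} and hence \eqref{eq:reverse}.

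For the equality analysis, I would trace equality through each step.  When $\alpha>\frac{1}{n+2}$, the power-mean step is strict unless $f_L$ is constant on $\bfS^n$, which forces $L$ to be a ball by Minkowski uniqueness; the identity $f_L=cu_e^{-1/\alpha}$ then makes $u_e$ constant and $\Omega$ a ball.  At the critical value $\alpha=\frac{1}{n+2}$ we have $p=q$, so the power-mean step becomes vacuous and equality in \eqref{eq:72} is equivalent to equality in Theorem~\ref{affine-iso} applied to $L$, which characterises $L$ as an ellipsoid; since the surface area measure of an ellipsoid $E$ has the form $f_E\propto u_E^{-(n+2)}$, comparing with $f_L\propto u_e^{-(n+2)}$ shows that $u_e$ is proportional to the support function of an ellipsoid, and hence $\Omega$ is itself an ellipsoid.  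The step I expect to demand the most care is this last one, where one must transfer the ellipsoidal identification from the surface-area-measure of $L$ back to a geometric statement about $\Omega$ through the Minkowski problem; for $\alpha<0$ the analysis is more direct, the equality cases of the Minkowski inequality and Jensen's inequality together forcing $L$, and hence $\Omega$, to be a ball.
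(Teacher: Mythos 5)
Your proposal is correct and follows essentially the same route as the paper: for $\alpha\geq\frac1{n+2}$ the affine isoperimetric inequality applied to the curvature image $\Lambda_\alpha\Omega$ combined with a power-mean/H\"older comparison, and for $\alpha<0$ the classical isoperimetric (Minkowski) inequality plus Jensen; your preliminary identity expressing $\mathcal{E}_\alpha(\Omega)$ through $\aint f_{\Lambda_\alpha\Omega}^{1-\alpha}$ is just a repackaging of the paper's direct computation of $\mathcal{A}(\Lambda_\alpha\Omega)$, since $f_{\Lambda_\alpha\Omega}\propto u_e^{-1/\alpha}$. The only minor point is that at $\alpha=1$ both your identity and the exponent $q=1-\alpha$ degenerate and one must pass to a limit (as the paper also notes); your equality discussion is in fact more explicit than the paper's.
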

\begin{proof} Without the loss of generality we may always assume that the origin is the entropy point. Recall that the surface area measure of $\Lambda_\alpha \Omega$ is given by equation \eqref{eq:curvature-image}.
%

For the case $\alpha \geq \frac{1}{n+2}$, we proceed as follows:
$$\mathcal{A}(\Lambda_\alpha \Omega)^{n+2}=\omega_n^{n+2}\left(\frac{|\Omega|}{|B(1)|}\right)^{n+1}\E^{-\frac{\alpha-1}{\alpha}\mathcal{E}_\alpha(\Omega)}\left(\aint_{\mathbb{S}^n} \left(\frac{1}{u(x)}\right)^{\frac{1}{\alpha}\frac{n+1}{n+2}}\, d\theta(x)\right)^{n+2}.
$$
Since $\alpha\geq \frac{1}{n+2}$ we have $\frac{1}{\alpha}\frac{n+1}{n+2}\geq \frac{1-\alpha}\alpha$, and the H\"older inequality gives
$$
\left(\aint_{\mathbb{S}^n} \left(\frac{1}{u(x)}\right)^{\frac{1}{\alpha}\frac{n+1}{n+2}}\, d\theta(x)\right)^{n+2}
\geq \left(\aint_{\bfS^n}\left(\frac{1}{u(x)}\right)^{\frac{1-\alpha}{\alpha}}\,d\theta(x)\right)^{\frac{n+1}{1-\alpha}}
=\E^{-\frac{n+1}{\alpha}\mathcal{E}_\alpha(\Omega)}.
$$
Note that the case $\alpha=1$ follows as a limit.
Hence we have that
\begin{equation}\label{eq:74}
\mathcal{A}(\Lambda_\alpha \Omega)^{n+2}\ge \omega_n^{n+2}\left(\frac{|\Omega|}{|B(1)|}\right)^{n+1} \E^{-(n+1)\mathcal{E}_\alpha(\Omega)}.
\end{equation}
The claimed result follows by combining the above with the affine isoperimetric inequality (\ref{lut1}) for the body $\Lambda_\alpha\Omega$.

For the case $\alpha<0$ we apply instead the isoperimetric inequality,   namely
\begin{equation}\label{isoperim-ineq}
A^{n+1}(\Lambda_\alpha \Omega)\ge \left(\frac{|\Lambda_\alpha \Omega|}{|B(1)|}\right)^n \omega_n^{n+1},
\end{equation}
where
$A(\Omega')$ is the surface area of $\partial \Omega'$ for any convex body $\Omega'$.

On the other hand, the H\"older inequality implies that
\begin{eqnarray}\label{eq:A-ineq}
A^{n+1}(\Lambda_\alpha\Omega)&=&\left(\frac{|\Omega|}{|B(1)|}\right)^{n+1}\E^{-\frac{\alpha-1}{\alpha}\mathcal{E}_\alpha(\Omega)}
\left(\int_{\mathbb{S}^n} u^{-\frac{1}{\alpha}}(x)\, d\theta(x)\right)^{n+1}\notag\\
&\le& \left(\frac{|\Omega|}{|B(1)|}\right)^{n+1} \E^{-(n+1)\mathcal{E}_\alpha(\Omega)}\omega_n^{n+1}.
\end{eqnarray}
Putting \eqref{isoperim-ineq} and \eqref{eq:A-ineq} together we have (\ref{eq:reverse}).  \end{proof}

Special cases of Theorem \ref{entropy-p} were proved for $n=1$ in \cite{Ivaki} using the flow method.

\bigskip

\noindent {\it Acknowledgements.}  The first author acknowledges the hospitality of the Yau Mathematical Sciences Center at Tsinghua University where some of this research was carried out.

\begin{bibdiv}
\begin{biblist}

\bib{AGLM}{article}{
   author={Alvarez, Luis},
   author={Guichard, Fr{\'e}d{\'e}ric},
   author={Lions, Pierre-Louis},
   author={Morel, Jean-Michel},
   title={Axioms and fundamental equations of image processing},
   journal={Arch. Rational Mech. Anal.},
   volume={123},
   date={1993},
   number={3},
   pages={199--257},
   issn={0003-9527},
   review={\MR{1225209 (94j:68306)}},
   doi={10.1007/BF00375127},
}


\bib{Andrews-Harnack}{article}{
   author={Andrews, Ben},
   title={Harnack inequalities for evolving hypersurfaces},
   journal={Math. Z.},
   volume={217},
   date={1994},
   number={2},
   pages={179--197},
   issn={0025-5874},
   review={\MR{1296393 (95j:58178)}},
   doi={10.1007/BF02571941},
}

\bib{Andrews-jdg}{article}{
   author={Andrews, Ben},
   title={Contraction of convex hypersurfaces by their affine normal},
   journal={J. Differential Geom.},
   volume={43},
   date={1996},
   number={2},
   pages={207--230},
   issn={0022-040X},
   review={\MR{1424425 (97m:58045)}},
}

\bib{Andrews-IMRN}{article}{
   author={Andrews, Ben},
   title={Monotone quantities and unique limits for evolving convex
   hypersurfaces},
   journal={Internat. Math. Res. Notices},
   date={1997},
   number={20},
   pages={1001--1031},
   issn={1073-7928},
   review={\MR{1486693 (99a:58041)}},
   doi={10.1155/S1073792897000640},
}

\bib{A-COCU}{article}{
   author={Andrews, Ben},
   title={Evolving convex curves},
   journal={Calc. Var. Partial Differential Equations},
   volume={7},
   date={1998},
   number={4},
   pages={315--371},
   issn={0944-2669},
   review={\MR{1660843 (99k:58038)}},
   doi={10.1007/s005260050111},
}

\bib{An3}{article}{
   author={Andrews, Ben},
   title={Gauss curvature flow: the fate of the rolling stones},
   journal={Invent. Math.},
   volume={138},
   date={1999},
   number={1},
   pages={151--161},
   issn={0020-9910},
   review={\MR{1714339 (2000i:53097)}},
   doi={10.1007/s002220050344},
}

\bib{Andrews-pjm}{article}{
   author={Andrews, Ben},
   title={Motion of hypersurfaces by Gauss curvature},
   journal={Pacific J. Math.},
   volume={195},
   date={2000},
   number={1},
   pages={1--34},
   issn={0030-8730},
   review={\MR{1781612 (2001i:53108)}},
   doi={10.2140/pjm.2000.195.1},
}

\bib{A-instab}{article}{
   author={Andrews, Ben},
   title={Non-convergence and instability in the asymptotic behaviour of
   curves evolving by curvature},
   journal={Comm. Anal. Geom.},
   volume={10},
   date={2002},
   number={2},
   pages={409--449},
   issn={1019-8385},
   review={\MR{1900758 (2003e:53086)}},
}

\bib{CLSICF}{article}{
   author={Andrews, Ben},
   title={Classification of limiting shapes for isotropic curve flows},
   journal={J. Amer. Math. Soc.},
   volume={16},
   date={2003},
   number={2},
   pages={443--459 (electronic)},
   issn={0894-0347},
   review={\MR{1949167 (2004a:53083)}},
   doi={10.1090/S0894-0347-02-00415-0},
}

\bib{AMZ}{article}{
   author={Andrews, Ben},
   author={McCoy, James},
   author={Zheng, Yu},
   title={Contracting convex hypersurfaces by curvature},
   journal={Calc. Var. Partial Differential Equations},
   volume={47},
   date={2013},
   number={3-4},
   pages={611--665},
   issn={0944-2669},
   review={\MR{3070558}},
   doi={10.1007/s00526-012-0530-3},
}

\bib{A-cxz}{article}{
   author={Andrews, Ben},
   author={Chen, Xuzhong},
   title={Surfaces moving by powers of Gauss curvature},
   journal={Pure Appl. Math. Q.},
   volume={8},
   date={2012},
   number={4},
   pages={825--834},
   issn={1558-8599},
   review={\MR{2959911}},
   doi={10.4310/PAMQ.2012.v8.n4.a1},
}

\bib{Blaschke}{book}{
	author = {Blaschke, W.},
	publisher={Springer, Berlin},
	series={Die Grundlehren der mathematischen Wissenschaften in Einzeldarstellung mit besonderer Ber\"ucksichtigung der Anwendungsgebiete},
	volume={VII},
	Language = {German},
	Title = {Vorlesungen \"uber Differentialgeometrie und geometrische Grundlagen von Einsteins Relativit\"atstheorie. II. Affine Differentialgeometrie.},
	Date = {1923},
	Zbl = {49.0499.01},}

\bib{Calabi}{article}{
   author={Calabi, Eugenio},
   title={Complete affine hyperspheres. I},
   conference={
      title={Symposia Mathematica, Vol. X},
      address={Convegno di Geometria Differenziale, INDAM, Rome},
      date={1971},
   },
   book={
      publisher={Academic Press, London},
   },
   date={1972},
   pages={19--38},
   review={\MR{0365607 (51 \#1859)}},
}

\bib{CS}{article}{
   author={Caselles, V.},
   author={Sbert, C.},
   title={What is the best causal scale space for three-dimensional images?},
   journal={SIAM J. Appl. Math.},
   volume={56},
   date={1996},
   number={4},
   pages={1199--1246},
   issn={0036-1399},
   review={\MR{1398415 (97e:68142)}},
   doi={10.1137/S0036139994269352},
}

\bib{Chou-Zhu}{book}{
   author={Chou, Kai-Seng},
   author={Zhu, Xi-Ping},
   title={The curve shortening problem},
   publisher={Chapman \& Hall/CRC, Boca Raton, FL},
   date={2001},
   pages={x+255},
   isbn={1-58488-213-1},
   review={\MR{1888641 (2003e:53088)}},
   doi={10.1201/9781420035704},
}

\bib{Chow}{article}{
   author={Chow, Bennett},
   title={Deforming convex hypersurfaces by the $n$th root of the Gaussian
   curvature},
   journal={J. Differential Geom.},
   volume={22},
   date={1985},
   number={1},
   pages={117--138},
   issn={0022-040X},
   review={\MR{826427 (87f:58155)}},
}

 \bib{Chow2}{article}{
   author={Chow, Bennett},
   title={On Harnack's inequality and entropy for the Gaussian curvature
   flow},
   journal={Comm. Pure Appl. Math.},
   volume={44},
   date={1991},
   number={4},
   pages={469--483},
   issn={0010-3640},
   review={\MR{1100812 (93e:58032)}},
   doi={10.1002/cpa.3160440405},
}

\bib{Firey}{article}{
   author={Firey, William J.},
   title={Shapes of worn stones},
   journal={Mathematika},
   volume={21},
   date={1974},
   pages={1--11},
   issn={0025-5793},
   review={\MR{0362045 (50 \#14487)}},
}

\bib{GT}{book}{
   author={Gilbarg, David},
   author={Trudinger, Neil S.},
   title={Elliptic partial differential equations of second order},
   series={Grundlehren der Mathematischen Wissenschaften [Fundamental
   Principles of Mathematical Sciences]},
   volume={224},
   edition={2},
   publisher={Springer-Verlag, Berlin},
   date={1983},
   pages={xiii+513},
   isbn={3-540-13025-X},
   review={\MR{737190 (86c:35035)}},
   doi={10.1007/978-3-642-61798-0},
}

\bib{Guan-N}{article}{
	author={Guan,  P.-F.},
	author={Ni, L.},
	title={Entropy and a convergence theorem for Gauss curvature flow in high dimensions},
	journal={J. Eur. Math. Soc.},
	status={to appear},}

\bib{Hamilton-gauss}{article}{
   author={Hamilton, Richard S.},
   title={Remarks on the entropy and Harnack estimates for the Gauss
   curvature flow},
   journal={Comm. Anal. Geom.},
   volume={2},
   date={1994},
   number={1},
   pages={155--165},
   issn={1019-8385},
   review={\MR{1312683 (96c:58041)}},
}

\bib{Huisken-convex}{article}{
   author={Huisken, Gerhard},
   title={Flow by mean curvature of convex surfaces into spheres},
   journal={J. Differential Geom.},
   volume={20},
   date={1984},
   number={1},
   pages={237--266},
   issn={0022-040X},
   review={\MR{772132 (86j:53097)}},
}

\bib{Ivaki}{article}{
	author={Ivaki, M.},
	title={Deforming a hypersurface by Gauss curvature and support function},
	status={preprint},
	eprint={ arXiv:1501.05456 },}

\bib{John}{article}{
   author={John, Fritz},
   title={Extremum problems with inequalities as subsidiary conditions},
   conference={
      title={Studies and Essays Presented to R. Courant on his 60th
      Birthday, January 8, 1948},
   },
   book={
      publisher={Interscience Publishers, Inc., New York, N. Y.},
   },
   date={1948},
   pages={187--204},
   review={\MR{0030135 (10,719b)}},
}

\bib{Jung}{article}{
   author={Jung, Heinrich},
   title={Ueber die kleinste Kugel, die eine r\"aumliche Figur einschliesst},
   language={German},
   journal={J. Reine Angew. Math.},
   volume={123},
   date={1901},
   pages={241--257},
   issn={0075-4102},
   review={\MR{1580570}},
   doi={10.1515/crll.1901.123.241},
}

\bib{kry}{article}{
   author={Krylov, N. V.},
   title={Boundedly inhomogeneous elliptic and parabolic equations in a
   domain},
   language={Russian},
   journal={Izv. Akad. Nauk SSSR Ser. Mat.},
   volume={47},
   date={1983},
   number={1},
   pages={75--108},
   issn={0373-2436},
   review={\MR{688919 (85g:35046)}},
}

\bib{Lutwak}{article}{
   author={Lutwak, Erwin},
   title={On some affine isoperimetric inequalities},
   journal={J. Differential Geom.},
   volume={23},
   date={1986},
   number={1},
   pages={1--13},
   issn={0022-040X},
   review={\MR{840399 (87k:52030)}},
}

 \bib{Mcoy}{article}{
   author={McCoy, James Alexander},
   title={Self-similar solutions of fully nonlinear curvature flows},
   journal={Ann. Sc. Norm. Super. Pisa Cl. Sci. (5)},
   volume={10},
   date={2011},
   number={2},
   pages={317--333},
   issn={0391-173X},
   review={\MR{2856150 (2012g:53139)}},
}

\bib{MP}{article}{
   author={Meyer, Mathieu},
   author={Pajor, Alain},
   title={On the Blaschke-Santal\'o inequality},
   journal={Arch. Math. (Basel)},
   volume={55},
   date={1990},
   number={1},
   pages={82--93},
   issn={0003-889X},
   review={\MR{1059519 (92b:52013)}},
   doi={10.1007/BF01199119},
}

\bib{NK}{article}{
	author={Neskovic, P.},
	author={Kimia, B.},
	title={Three-dimensional shape representation from curvature-dependent surface evolution}
	status={Technical report LEMS-128, Divison of Engineering, Brown University},
	date={1993},}

\bib{Santalo}{article}{
   author={Santal{\'o}, L. A.},
   title={An affine invariant for convex bodies of $n$-dimensional space},
   language={Spanish},
   journal={Portugaliae Math.},
   volume={8},
   date={1949},
   pages={155--161},
   review={\MR{0039293 (12,526f)}},
}

\bib{ST1}{article}{
   author={Sapiro, Guillermo},
   author={Tannenbaum, Allen},
   title={On invariant curve evolution and image analysis},
   journal={Indiana Univ. Math. J.},
   volume={42},
   date={1993},
   number={3},
   pages={985--1009},
   issn={0022-2518},
   review={\MR{1254129 (94m:58048)}},
   doi={10.1512/iumj.1993.42.42046},
}

\bib{ST2}{article}{
	author={Sapiro, Guillermo},
	author={Tannenbaum, Allen},
	title={Affine invariant scale-spaces.},
	journal={Internat. J. Comput. Vision},
	volume={11},
	date={1993},
	pages={25–--44}}

\bib{ST3}{article}{
   author={Sapiro, Guillermo},
   author={Tannenbaum, Allen},
   title={On affine plane curve evolution},
   journal={J. Funct. Anal.},
   volume={119},
   date={1994},
   number={1},
   pages={79--120},
   issn={0022-1236},
   review={\MR{1255274 (94m:58049)}},
   doi={10.1006/jfan.1994.1004},
}

\bib{Sch}{book}{
   author={Schneider, Rolf},
   title={Convex bodies: the Brunn-Minkowski theory},
   series={Encyclopedia of Mathematics and its Applications},
   volume={151},
   edition={Second expanded edition},
   publisher={Cambridge University Press, Cambridge},
   date={2014},
   pages={xxii+736},
   isbn={978-1-107-60101-7},
   review={\MR{3155183}},
}

\bib{Smoczyk}{article}{
   author={Smoczyk, Knut},
   title={Starshaped hypersurfaces and the mean curvature flow},
   journal={Manuscripta Math.},
   volume={95},
   date={1998},
   number={2},
   pages={225--236},
   issn={0025-2611},
   review={\MR{1603325 (99c:53033)}},
   doi={10.1007/s002290050025},
}

\bib{Steinhagen}{article}{
   author={Steinhagen, Paul},
   title={\"Uber die gr\"o\ss te Kugel in einer konvexen Punktmenge},
   language={German},
   journal={Abh. Math. Sem. Univ. Hamburg},
   volume={1},
   date={1922},
   number={1},
   pages={15--26},
   issn={0025-5858},
   review={\MR{3069385}},
   doi={10.1007/BF02940577},
}

\bib{Tso}{article}{
   author={Tso, Kaising},
   title={Deforming a hypersurface by its Gauss-Kronecker curvature},
   journal={Comm. Pure Appl. Math.},
   volume={38},
   date={1985},
   number={6},
   pages={867--882},
   issn={0010-3640},
   review={\MR{812353 (87e:53009)}},
   doi={10.1002/cpa.3160380615},
}

\end{biblist}
\end{bibdiv}

\end{document}